\newcommand{\BS}[1]{\boldsymbol{#1}}
\newcommand{\BB}[1]{\mathbb{#1}}
\renewcommand{\section}{%
  \@startsection{section}%
   {1}%
   {\z@}%
   {-3.5ex \@plus -1ex \@minus -.2ex}%
   {2.3ex \@plus.2ex}%
   {\normalfont\normalsize\bfseries}%
}%
\numberwithin{equation}{section}		
\newcommand{\subjclass}[2][2010]{%
  \let\@oldtitle\@title%
  \gdef\@title{\@oldtitle\footnotetext{#1 \emph{Mathematics subject classification.} #2}}%
}
\newcommand{\keywords}[1]{%
  \let\@@oldtitle\@title%
  \gdef\@title{\@@oldtitle\footnotetext{\emph{Key words and phrases.} #1.}}%
}
\newtheorem{thm}{Theorem}[section]
\newtheorem{lem}[thm]{Lemma}
\theoremstyle{definition}
\newtheorem{rem}[thm]{Remark}
\newtheorem{exam}[thm]{Example}
\let\c@equation\c@thm
\let\cl@equation\cl@thm
\renewenvironment{itemize}%
{%
   \begin{list}{\parbox{10pt}{$\bullet$}}
   {%
      \setlength{\topsep}{0pt}
      \setlength{\itemindent}{5pt}
      \setlength{\leftmargin}{10pt}
      \setlength{\rightmargin}{0pt}
      \setlength{\labelsep}{6pt}
      \setlength{\labelwidth}{10pt}
      \setlength{\itemsep}{0em}
      \setlength{\parsep}{0em}
      \setlength{\listparindent}{0pt}
   }
}{%
   \end{list}%
}
\newcommand{\CR}[1]{{\color{red}{#1}}} %
\newcommand{\comp}{\textrm{\scriptsize$\circ$}}
\newcommand{\sign}[1]{\mathop{\mathrm{sign}}{#1}}
\newcommand{\ord}[1]{\mathop{\mathrm{ord}}{#1}}
\title{Curvature criteria of $\mathcal A$-simple singularities $\BB{R},0\longrightarrow\BB{R}^2,0$ and their parallel curves}
\author{Toshizumi Fukui and Saiki Hoshino\\
\footnotesize Saitama University, 255 Shimo-Okubo, Saitama, 338-8570, Japan\\
\footnotesize\textit{E-mail address}:
\texttt{tfukui@rimath.saitama-u.ac.jp}, 
\texttt{s.hoshino.math@gmail.com}
}
\date{December 27, 2025}
\begin{document}
\maketitle

\begin{abstract}
We introduce the notion of curvature parameters for singular plane curves with finite multiplicities and define the notion of curvatures for them. 
We then provide criteria to determine their singularity types for $\mathcal A$-simple singularities. 
As an application, we investigate singularity types of their parallel curves.  
\end{abstract}
Plane curves have long been studied since the early development of geometry. However, their differential-geometric structure at singular points has received relatively little attention. 
This is partly due to the divergence of classical curvature at singular points, and to the intrinsic difficulty of classifying degenerate singularities. 
Meanwhile, in singularity theory, the concept of $\mathcal A$-simple singularities was introduced and  $\mathcal A$-simple curve singularities have attracted considerable attention from many researchers, and numerous studies have been devoted to their classification (\cite{A}, \cite{GH}, \cite{JP}, \cite{N}, \cite{S}, \cite{Z}). 
In this paper, we focus $\mathcal A$-simple curve singularities from the differential geometric perspective.

We introduce a curvature parameter for locally irreducible plane curve germs and define invariants that extend the classical curvature of nonsingular curves. 
In principle, these invariants serve as a tool for identifying the singularity type.
Our primary aim is to formulate criteria (Theorem \ref{Thm:S}) for determining the singularity type of $\mathcal{A}$-simple singularities in terms of this invariant. 
As an application, we determine all cases in which the parallel curve to an $\mathcal{A}$-simple singularity is also 
$\mathcal{A}$-simple.

It is natural to model plane curves as the images of smooth maps $\phi: \mathbb{R} \longrightarrow \mathbb{R}^2$. 
In the nonsingular case, the standard construction
--- 
reparametrising the curve by arc length and defining curvature as the derivative of the angle of the tangent vector with respect to arc length
---
is well established in classical texts. 
However, this approach does not extend naturally to curves with singularities.

Assuming finite multiplicity, we introduce a parameter $s$ such that $s^m / m!$ represents the arc length, where $m$ denotes the multiplicity of the curve. This allows us to define a curvature-like invariant even in the presence of singularities. 
We refer to such a parameter $s$ as a {\bf curvature parameter}. The notion originates in \cite{Fukui2017}, where it is shown that the associated invariant determines the curve germ up to rotation and translation. In particular, a fundamental theorem is established for plane curve singularities of finite multiplicity. 
In the case of multiplicity two, a related construction has been given in \cite{SU}. 
We also note that this idea appears in the work of Porteous on cusps \cite[\S1.6]{Porteous},
who explicitly stated the conditions for $A_2$ and  $E_6$ singularities (ibid.~page 12). 

The classification of singularities requires the choice of an equivalence relation. We adopt $\mathcal{A}$-equivalence, wherein two map germs are considered equivalent if they are related by coordinate changes in both source and target. This is the standard framework in singularity theory and differential topology. It is known that 
$\mathcal{A}$-equivalence classes may possess moduli in the presence of degenerate singularities, that is, the equivalence class may vary continuously with parameters. Consequently, the classification problem is, in general, highly non-trivial.

Accordingly, we restrict attention to $\mathcal{A}$-simple singularities $\BB{R},0\longrightarrow\BB{R}^2,0$ --- those for which any local deformation gives rise to only finitely many $\mathcal{A}$-equivalence classes. 
The classification of such singularities is due to Bruce and Gaffney (\cite{BG}),  
and is reproduced as Theorem \ref{Thm:ClassificationASimple} in the real case.
The main contribution of this paper is a characterization of singularity types for
$\mathcal A$-simple singularities in terms of the curvature invariant introduced above. 
This result is summarized as Theorem \ref{Thm:S}. 
In particular, as stated in Theorem \ref{Thm:S} (i) (see also Theorem \ref{Thm:M2}),  
the condition characterizing an $A_{2k}$ singularity is especially concise and stands in marked contrast to the more elaborate conditions previously established for the $A_4$ and $A_6$ cases (see \cite[Theorem 1.23]{Porteous} and  \cite[Theorem A.1]{HHM}). 
The criteria for $E_{6k}$, $E_{6k+2}$, $W_{12}$, $W^\#_{1,2q-1}$ and $W_{18}$ singularities (appeared in Theorems \ref{Thm:S} (ii) and (iii)) are also simple, though slightly more intricate (see Theorems \ref{Thm:M3} and \ref{Thm:M4} also).

As an application, we determine the conditions under which the singularities of the parallel curves of an $\mathcal{A}$-simple singularity remain $\mathcal{A}$-simple.
In particular, we show that the parallels of the $E_{12}$ and $E_{14}$ singularities degenerate at the distance 
$\delta $ so that $\delta^{-1}$ is equal to the quantity \eqref{SecondCurv} in Remark~\ref{SexCurv}. 
This quantity can be regarded as an analogue of curvature. 
We also discuss the generalization of this phenomenon to $E_{6k}$ and $E_{6k+2}$ singularities (Theorem \ref{E6kandE6k+2}).

The paper is organized as follows. In \S\ref{1}, we recall some preliminaries from singularity theory, including the classification of \( \mathcal{A} \)-simple singularities due to Bruce and Gaffney. In \S\ref{2}, we introduce the notion of curvature parameters for singular plane curves of finite multiplicity and collect several facts required for subsequent sections. The main results are presented in \S\ref{3}, where we provide criteria for each \( \mathcal{A} \)-simple singularity type, formulated in terms of our invariants\CR{.} The corresponding proofs are given in \S\ref{4}. In \S\ref{5}, we investigate the singularity types of parallel curves associated with given \( \mathcal{A} \)-simple singularities.

\begin{flushleft}
{\bf Contents}
\end{flushleft}
\begin{itemize}
\item[1.] Preliminary\dotfill\pageref{1}
\item[2.] Curvature parameter\dotfill\pageref{2}
\item[3.] Criteria of singularities\dotfill\pageref{3}
\item[4.] Proofs of criteria\dotfill\pageref{4}
\item[5.] Parallel curves\dotfill\pageref{5}
\end{itemize}
\section{Preliminary}\label{1}
In this section, we reviews the basics of singularity theory required in this paper.

We say that two map germs $f,g:(\BB{K}^n,0)\longrightarrow(\BB{K}^p,0)$ are
\begin{itemize}
\item {\bf $\mathcal R$-equivalent} if there is a diffeomorphism germ $h:(\BB{K}^n,0)\longrightarrow(\BB{K}^n,0)$ such that $f\comp h(x)=g(x)$;
\item {\bf $\mathcal L$-equivalent} if there is a diffeomorphism germ $\psi:(\BB{K}^p,0)\longrightarrow(\BB{K}^p,0)$ such that $f(x)=\psi\comp g(x)$;
\item {\bf $\mathcal A$-equivalent} if there are 
a diffeomorphism germs $h:(\BB{K}^n,0)\longrightarrow(\BB{K}^n,0)$ and 
$\psi:(\BB{K}^p,0)\longrightarrow(\BB{K}^p,0)$ such that $f\comp h(x)=\psi\comp g(x)$;  
\item {\bf $\mathcal K$-equivalent} if there are 
a diffeomorphism germ $h:(\BB{K}^n,0)\longrightarrow(\BB{K}^n,0)$ and  
a $C^\infty$-germ $A:(\BB{K}^n,0)\longrightarrow\mathrm{GL}(\BB{K}^p)$ 
such that $A(x) f\comp h(x)=g(x)$. 
\end{itemize}

We  recall the classification result of $\mathcal A$-simple germs $\BB{K},0\longrightarrow\BB{K}^2,0$, due to 
Bruce and Gaffney \cite{BG}, where $\BB{K}=\BB{R}$, $\BB{C}$. 
The notion of $\mathcal A$-simple map is defined as follows (see Definition 2.6 (2) ibid.): 
A map $\phi:\BB{K},0\longrightarrow\BB{K}^2,0$ is {\bf $\mathcal A$-simple} if $\phi$ is an irreducible parametrization 
and for any $k$-parameter deformation $\{\phi^u\}_{u\in\BB{K}^k,0}$, $\phi^0=\phi$, 
we do not have $0\in\BB{K}^k$  in the closure of a set of $u$ with $\phi^u$ all $\mathcal A$ distinct.
\begin{thm}[{\cite[Theorem 3.8]{BG}}]\label{Thm:ClassificationASimple}
The following are representatives of the $\mathcal A$-simple germs $\phi:\BB{R},0\longrightarrow\BB{R}^2,0$:
\begin{center}
\begin{tabular}{c|l}
\hline
Type of $f$ &Normal form of $\phi$\\
\hline$A_{2k}$&$(t^2,t^{2k+1})$\\
\hline $E_{6k}$&$(t^3,t^{3k+1}+\varepsilon_p t^{3(k+p)+2})$, $0\le p\le k-2$; $(t^3,t^{3k+1})$\\
\hline $E_{6k+2}$&$(t^3,t^{3k+2}+\varepsilon_{p+1} t^{3(k+p)+4})$, $0\le p\le k-2$; $(t^3,t^{3k+2})$\\
\hline $W_{12}$&$(t^4,t^5\pm t^7)$, $(t^4,t^5)$\\
\hline $W_{1,2q-1}^{\#}$&$(t^4,t^6+t^{2q+5})$, $q\ge1$\\
\hline $W_{18}$&$(t^4,t^7\pm t^9)$, $(t^4,t^7\pm t^{13})$, $(t^4,t^7)$\\
\hline
\end{tabular}
\end{center}
where $f:(\BB{R}^2,0)\longrightarrow(\BB{R},0)$ is a defining equation of the image of $\phi$
and $\varepsilon_p$ is $1$ if $p$ is even; $\pm1$ if $p$ is odd.
Here type of $f$ is $A_{2k}$, $E_{6k}$, $E_{6k+2}$, $W_{12}$, $W_{1,2q-1}^\#$ or $W_{18}$ means $f$ is $\mathcal R$-equivalent to the normal form in the table below. 
\end{thm}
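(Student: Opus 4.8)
The plan is to organize the classification by the multiplicity $m=\operatorname{ord}_0\phi$, which is an $\mathcal{A}$-invariant, and to carry out a jet-by-jet normal form reduction using finite $\mathcal{A}$-determinacy together with the complete transversal method. The central object is the extended tangent space to the $\mathcal{A}$-orbit,
\[
T\mathcal{A}_e\cdot\phi=\mathcal{E}_1\langle\dot{\phi}\rangle+\phi^*(\mathcal{E}_2)\langle e_1,e_2\rangle,
\]
where $\mathcal{E}_1$ is the ring of germs in the source variable $t$, $\phi^*(\mathcal{E}_2)\subset\mathcal{E}_1$ is the subring generated by the components of $\phi$, and $e_1,e_2$ is the standard frame of the target. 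At each degree $k$ I would compute the degree-$k$ part of this module; by the complete transversal theorem a complementary subspace parametrizes the jets of degree $k$ extending a fixed class of $(k-1)$-jets, and Mather--Gaffney finite determinacy (valid since these germs are $\mathcal{K}$-finite) lets me truncate the Taylor expansion once $\mathcal{M}_1^{k+1}\,\mathcal{E}(1,2)\subseteq T\mathcal{A}\cdot\phi$.

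First I would fix the admissible leading data. Irreducibility forces the exponents actually occurring in $\phi$ to generate a numerical semigroup of greatest common divisor $1$; after a linear change in the target and a source reparametrization one may assume $\phi=(t^m,\psi(t))$ with $\operatorname{ord}\psi>m$, and target changes of the form $y\mapsto y-P(x)$ remove from $\psi$ every monomial $t^{\,j}$ with $j\equiv0\pmod m$. Thus $\psi=\sum_{j\not\equiv0\,(m)}c_jt^{\,j}$, and the lowest surviving exponent $n$ together with $m$ determines the broad type.

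Then I would treat each multiplicity in turn. For $m=2$ the surviving exponents are odd, the leading one is $2k+1$, and successive complete transversals show every higher odd correction lies in $T\mathcal{A}\cdot\phi$; this yields the single form $(t^2,t^{2k+1})$ with no modulus, namely $A_{2k}$. For $m=3$ the exponents are $\equiv1$ or $\equiv2\pmod3$, producing the two subfamilies $E_{6k}$ (leading $t^{3k+1}$) and $E_{6k+2}$ (leading $t^{3k+2}$); at the decisive degrees the transversal is one dimensional and the surviving coefficient cannot in general be scaled to $1$, which is precisely the origin of the parameters $\varepsilon_p$. For $m=4$ the leading exponent is $5$, $6$ or $7$, giving $W_{12}$, $W^{\#}_{1,2q-1}$ and $W_{18}$ respectively (the extra odd term in the $W^{\#}$ series being forced by the gcd $1$ condition), each with only finitely many genuine correction terms. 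Finally I would show that $m\ge5$ already produces moduli at low jet level --- the relevant transversal acquires a scaling quotient of positive dimension --- so no such germ is $\mathcal{A}$-simple, closing the list.

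The main obstacle is the precise bookkeeping of the complete transversals: at every degree one must decide which correction monomials are absorbed by $T\mathcal{A}\cdot\phi$ (hence removable) and which are genuine, and then verify finite determinacy so that each normal form is polynomial. The crux is separating the simple germs from the modal ones --- locating the exact exponent at which a one-parameter family of pairwise inequivalent germs first appears --- since this simultaneously proves completeness of the list and the bound $m\le4$. Over $\mathbb{R}$ one must in addition track the residual finite symmetry (such as $t\mapsto-t$) left after the connected scalings are exhausted: this is what fixes the sign ambiguities $\varepsilon_p$ and $\pm$, which are genuine real choices absent in the complex classification.
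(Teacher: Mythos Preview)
The paper does not prove this theorem at all: it is quoted from Bruce--Gaffney \cite{BG} as a known classification, with the remark that the real case follows the complex one and only the statement is adapted. So there is no ``paper's own proof'' to compare against; your proposal is really a sketch of how \cite{BG} itself proceeds.

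As such a sketch your outline is sound. The organization by multiplicity $m$, the reduction to $(t^m,\psi(t))$ with $\psi$ containing no exponents divisible by $m$, the use of the extended $\mathcal{A}$-tangent space to kill correction terms degree by degree, finite determinacy to truncate, and the appearance of moduli for $m\ge5$ are exactly the ingredients of the Bruce--Gaffney argument. One historical quibble: the formal complete transversal theorem postdates \cite{BG}; Bruce and Gaffney work directly with the tangent space and Mather's lemma, though the effect is the same. Your treatment of the real refinements is also correct in spirit --- the residual signs $\varepsilon_p$ and $\pm$ arise precisely because the scaling $t\mapsto\lambda t$ can only adjust the sign of a coefficient when the relevant exponent difference has the right parity. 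What remains genuinely laborious, and what your sketch understandably elides, is the case-by-case verification that at each stage the transversal really is as small as claimed and that the determinacy degree is reached; this is the bulk of the work in \cite{BG} and cannot be shortcut.
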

Theorem 3.8 in \cite{BG} is stated for the complex case. However, as noted on page 465, line 5, \lq\lq The real case is similar, but we omit the details."  The statement given above provides the corresponding result in the real setting. 
It should also be noted that in \cite{BG}, the normal forms of the 
$E$-series are written without parentheses in the exponents of the final terms; the omitted parentheses should be understood as if they were included.
\begin{exam}
The map $t\mapsto(t^m,t^p-t^q)$ is $\mathcal A$-equivalent to $t\mapsto(t^m,t^p+t^q)$ if $p\not\equiv q\bmod2$.
This equivalence is achieved by changing the signs of the coordinates of the source and the target.
\end{exam}

The notation $A_{2k}$, $E_{6k}$, $E_{6k+2}$, $W_{12}$, $W^\#_{1,2q-1}$ and $W_{18}$ come from 
the celebrated classification result of singularities of functions $f:\BB{R}^2,0\longrightarrow\BB{R},0$ due to V.\,I.\,Arnold (see \cite{AGV1}) 
 by $\mathcal{R}$-equivalence. We recall their normal forms here. 
\begin{center}
\begin{tabular}{c|l|c|c|c}
\hline Type &Normal form&$\mu$&$m$&page of \cite{AGV1}\\
\hline $A_{2k}$&$y^2+x^{2k+1}$&$2k$&$0$&246\\
\hline $E_{6k}$&$y^3+x^{3k+1}+(a_0+\cdots+a_{k-2}x^{k-2})x^{2k+1}y$&$6k$&$k-1$&248\\
\hline $E_{6k+2}$&$y^3+x^{3k+2}+(a_0+\cdots+a_{k-2}x^{k-2})x^{2k+2}y$&$6k+2$&$k-1$&248\\
\hline $W_{12}$&$y^4+x^5+cx^3y^2$&$12$&$1$& 247 \\
\hline $W^\#_{1,2q-1}$&$(y^2+x^3)^2+(b_0+b_1x)x^{q+4}y$, $b_0\ne0$&$2q+14$&$2$&247\\
\hline $W_{18}$&$y^4+x^7+(b_0+b_1x)x^4y^2$&$18$&$2$& 248 \\
\hline
\end{tabular}
\end{center}
Here $\mu$ denotes the Milnor number and $m$ denotes the modality with respect to $\mathcal R$-equivalence. 

\begin{lem}[{Real version of \cite[Lemma 2.2]{BG}}]\label{BG}
If $\phi_i:\BB{R},0\longrightarrow\BB{R}^2,0$ are irreducible parameterizations with defining equations 
$f_i:\BB{R}^2,0\longrightarrow\BB{R},0$, $i=0,1$,
then $\phi_0$ and $\phi_1$ are $\mathcal A$-equivalent if and only if $f_0$ and $f_1$ are $\mathcal K$-equivalent.
\end{lem}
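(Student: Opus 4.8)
The plan is to translate both equivalence relations into geometric statements about the image curve germs $C_i=\mathrm{im}(\phi_i)$ and then match them. The bridge is the observation that, since $\phi_i$ is an irreducible parametrization, its image $C_i$ is a germ of a reduced irreducible real curve and $f_i$ is, up to a unit, the reduced generator of the ideal of function germs vanishing on $C_i$, so that $V(f_i)=C_i$ as germs. First I would record the standard reformulation of $\mathcal K$-equivalence for hypersurface germs: $f_0$ and $f_1$ are $\mathcal K$-equivalent if and only if there is a diffeomorphism germ $\Psi$ of $(\BB{R}^2,0)$ and a unit $u$ with $f_0=u\cdot(f_1\comp\Psi)$, and this holds precisely when some diffeomorphism germ carries $V(f_0)=C_0$ onto $V(f_1)=C_1$ (indeed $V(f_0)=\Psi^{-1}(V(f_1))$). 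The point requiring attention over $\BB{R}$ is that the reduced defining equation is determined up to a unit by its zero set; this rests on $C_i$ being genuinely one-dimensional, which holds because $\phi_i$ is an honest parametrization of a curve.

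Granting this reformulation, the two directions run as follows. For the forward implication, suppose $\phi_0\comp h=\psi\comp\phi_1$ with $h,\psi$ diffeomorphism germs. A reparametrization of the source does not change the image, so $C_0=\mathrm{im}(\phi_0)=\mathrm{im}(\phi_0\comp h)=\mathrm{im}(\psi\comp\phi_1)=\psi(C_1)$. Thus $\psi^{-1}$ carries $C_0$ onto $C_1$, and by the reformulation above $f_0$ and $f_1$ are $\mathcal K$-equivalent. For the converse, $\mathcal K$-equivalence yields a diffeomorphism germ $\psi$ of $(\BB{R}^2,0)$ with $\psi(C_0)=C_1$. Then $\psi\comp\phi_0$ and $\phi_1$ are two irreducible parametrizations of the same curve germ $C_1$. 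The crucial step is to produce a diffeomorphism germ $h$ of $(\BB{R},0)$ with $\phi_1\comp h=\psi\comp\phi_0$; this relation is exactly the defining condition for $\mathcal A$-equivalence of $\phi_0$ and $\phi_1$.

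That last step---the uniqueness of the parametrization of an irreducible curve germ---is where I expect the real work to lie, and it is the essential content that must be verified in the real setting, the complex case being \cite[Lemma 2.2]{BG}. The statement needed is the real analogue of the uniqueness of the normalization: any two primitive parametrizations of an irreducible real curve germ differ by a diffeomorphism of the source. I would establish it by passing to the multiplicity, writing each parametrization in a Puiseux-type normal form and comparing leading terms, so that the source diffeomorphism $h$ is read off by matching coefficients order by order. Care is needed because we work in the $C^\infty$ category rather than the analytic or algebraic one, so $h$ must be produced as a smooth, not merely formal, diffeomorphism; in the finitely determined situation relevant here, finite multiplicity reduces the comparison to finite jets (so that one may work with the polynomial normal forms of Theorem \ref{Thm:ClassificationASimple}) and is what upgrades the formal solution to a smooth one. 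Once this uniqueness is in hand, the lemma follows at once from the two chains of identities above.
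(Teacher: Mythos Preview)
Your proposal is correct and follows essentially the same route as the paper's proof: both reduce $\mathcal K$-equivalence of the $f_i$ to the existence of a target diffeomorphism identifying the zero sets $C_i$, observe that $\mathcal A$-equivalence visibly preserves images, and then close the converse by invoking uniqueness of an irreducible parametrization up to source reparametrization. The only notable difference is that the paper simply cites a reference for this last uniqueness step, whereas you sketch an argument via Puiseux-type normal forms and finite determinacy; note however that the lemma as stated does not assume finite determinacy or $\mathcal A$-simplicity, so appealing to Theorem~\ref{Thm:ClassificationASimple} would only justify the cases actually used in the paper rather than the lemma in full generality.
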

\begin{proof}
Let $I_f$ denote the ideal generated by the function $f:\BB{R}^2,0\longrightarrow\BB{R},0$.
It is proved in \cite[p.\,149]{G}  that $f_0$ and $f_1$ are $\mathcal K$-equivalent if and only if
there is a germ of a diffeomorphism $h:\BB{R}^2,0\longrightarrow\BB{R}^2,0$ taking $I_{f_0}$ to $I_{f_1}$, 
that is, such that $h^*I_{f_0} = I_{f_1}$. 
It follows that if $I_{f_0}$ and $I_{f_1}$ are prime ideals then $f_0$ and $f_1$ are $\mathcal K$ equivalent 
if and only if there is a germ of a diffeomorphism $h:\BB{R}^2,0\longrightarrow\BB{R}^2,0$ taking $f_0^{-1}(0),0$ to $f_1^{-1}(0),0$. 

If $\phi_0$ and $\phi_1$ are $\mathcal A$-equivalent then there is a diffeomorphism 
$h:\BB{R}^2,0\longrightarrow\BB{R}^2,0$ preserving the images of $\phi_0$ and $\phi_1$, 
and hence the zero locus of $f_0$ and $f_1$. 
Since $I_{f_j}=\langle f_j\rangle$ is the ideal of germs vanishing in the irreducible germ
$\{f_j=0\},0$ it is prime and so the $f_j$ are $\mathcal K$-equivalent. 

Conversely if $f_0$ and $f_1$  are $\mathcal K$-equivalent there is a germ of 
a diffeomorphism $h:\BB{R}^2,0\longrightarrow\BB{R}^2,0$ taking $\{f_0=0\}, 0$ to $\{f_1=0\},0$. 
Thus the composite $h\comp\phi_0$ is an irreducible parametrization of $\{f_1=0\},0$, 
and since such a parametrization is unique up to change of coordinates [9, p. 96] 
we find that $\phi_0$ and $\phi_1$ are $\mathcal A$-equivalent.
\end{proof}

\section{Curvature parameter}\label{2}
We introduce the notion of curvature parameter for an irreducible curve germ in $\BB{R}^2,0$.

We say that a $C^\infty$-map $\phi:\BB{R},0\longrightarrow\BB{R}^2,0$ is {\bf of multiplicity $m$} at $t=0$, 
if there exists a $C^\infty$-map $\hat{\phi}:\BB{R},0\longrightarrow\BB{R}^2,0$ so that 
$$
\phi(t)=\frac{t^m}{m!}\hat{\phi}(t),\quad \hat{\phi}(0)\ne0.
$$
\begin{lem}
If a $C^\infty$-map $\phi:\BB{R},0\to\BB{R}^2,0$ is of multiplicity $m$, then there is $C^\infty$-parameter $s=s(t)$ so that $\pm s^m/m!$ is an arc length parameter. Moreover there exists a unit vector field $\BS{t}$ along the curve $\phi$ so that 
\begin{equation}\label{PT}
\frac{d\phi}{ds}=\frac{s^{m-1}}{(m-1)!}\,\BS{t}.
\end{equation}
\end{lem}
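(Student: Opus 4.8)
The plan is to extract the factor $t^{m-1}$ from $\phi'$ and then reparametrise by a smooth $m$-th root. First I would differentiate $\phi(t)=\frac{t^m}{m!}\hat\phi(t)$ to obtain
$$
\phi'(t)=\frac{t^{m-1}}{(m-1)!}\,\psi(t),\qquad \psi(t):=\hat\phi(t)+\tfrac{t}{m}\hat\phi'(t),
$$
and observe that $\psi(0)=\hat\phi(0)\ne0$. Hence $g(t):=|\psi(t)|=\sqrt{\langle\psi(t),\psi(t)\rangle}$ is a smooth, strictly positive function near $t=0$, and $\BS{t}:=\psi(t)/g(t)$ is a smooth unit vector field along $\phi$, so that $\phi'(t)=\frac{t^{m-1}}{(m-1)!}g(t)\,\BS{t}$.

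Next I would define the candidate curvature parameter. Set $F(t)=\int_0^t \tau^{m-1}g(\tau)\,d\tau$; the point of integrating the smooth integrand $\tau^{m-1}g(\tau)$, rather than the speed $|\tau|^{m-1}g(\tau)/(m-1)!$ which is only finitely differentiable when $m$ is even, is that $F$ is smooth. Using the Hadamard-type substitution $\tau=t\rho$ I would write
$$
F(t)=t^m\,h(t),\qquad h(t):=\int_0^1\rho^{m-1}g(t\rho)\,d\rho,
$$
where $h$ is smooth (differentiate under the integral sign) and $h(0)=g(0)/m>0$. Since $mh(t)>0$ near $0$, the function $u(t):=(mh(t))^{1/m}$ is smooth with $u(0)=g(0)^{1/m}>0$, and I would set $s(t):=t\,u(t)$, so that $s(t)^m=mF(t)$. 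Because $s(0)=0$ and $s'(0)=u(0)\ne0$, the map $s$ is a diffeomorphism germ, giving a legitimate change of parameter.

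It then remains to verify the two assertions. Differentiating $s^m=mF$ gives $s^{m-1}\,\frac{ds}{dt}=t^{m-1}g(t)$, whence, for $t\ne0$,
$$
\frac{d\phi}{ds}=\frac{d\phi}{dt}\cdot\frac{dt}{ds}=\frac{t^{m-1}g(t)}{(m-1)!}\,\BS{t}\cdot\frac{s^{m-1}}{t^{m-1}g(t)}=\frac{s^{m-1}}{(m-1)!}\,\BS{t}.
$$
Both sides are smooth in $s$ and agree for $s\ne0$, so \eqref{PT} holds identically by continuity. Finally, putting $\sigma=\pm s^m/m!$ we have $d\sigma/ds=\pm s^{m-1}/(m-1)!$, and the chain rule together with \eqref{PT} yields $|d\phi/d\sigma|=|\BS{t}|=1$ wherever $s\ne0$; choosing the sign according to the branch (the sign is constant for $m$ odd, since then $s^m/m!$ already equals the signed arc length $\int_0^t|\phi'|\,d\tau$) shows that $\pm s^m/m!$ is an arc length parameter.

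The main obstacle is the smoothness of $s$ at the singular point: the naive reparametrisation by arc length fails because the speed $|\phi'|$ involves $|t|^{m-1}$ and is not smooth when $m$ is even. This is resolved precisely by integrating the signed quantity $\tau^{m-1}g(\tau)$ and taking the smooth $m$-th root $t\,(mh(t))^{1/m}$; here the finite-multiplicity hypothesis $\hat\phi(0)\ne0$ is indispensable, as it guarantees $h(0)\ne0$, so that the $m$-th root is smooth and $s$ is a genuine diffeomorphism.
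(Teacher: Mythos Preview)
Your proof is correct and follows essentially the same approach as the paper's: factor $\phi'(t)=\frac{t^{m-1}}{(m-1)!}\psi(t)$ with $\psi(0)\ne0$, integrate the signed quantity $\tau^{m-1}|\psi(\tau)|$, extract the factor $t^m$, and take the smooth $m$-th root to define $s$. The only cosmetic difference is that you construct the unit field $\BS{t}=\psi/|\psi|$ at the outset and then verify \eqref{PT}, whereas the paper first defines $s$, then takes $\BS{t}$ as the quotient of $d\phi/ds$ by $s^{m-1}/(m-1)!$ and checks a posteriori that $|\BS{t}|=1$; your Hadamard substitution is also slightly more explicit than the paper's one-line invocation of a smooth $\varphi$ with $\int_0^t\frac{u^{m-1}}{(m-1)!}|\BS{T}(u)|\,du=\frac{t^m\varphi(t)}{m!}$.
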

\begin{proof}
If $\phi:\BB{R},0\to\BB{R}^2,0$ is of multiplicity $m$, then $\frac{d\phi}{dt}$ is divided by $t^{m-1}/(m-1)!$ whose quotient is non-zero $C^\infty$ map. 
That is, there exists a $C^\infty$-map $\BS{T}:\BB{R},0\to\BB{R}^2,0$ so that 
$$
\frac{d\phi}{dt}=\frac{t^{m-1}}{(m-1)!}\BS{T}(t)\quad\text{ with }\quad\BS{T}(0)\ne0. 
$$
Then there is a $C^\infty$-function $\varphi(t)$ such that 
$$
\int_0^t\frac{u^{m-1}}{(m-1)!}|\BS{T}(u)|\,du=\frac{t^m\varphi(t)}{m!}.
$$  
Define a new parameter $s$ by $s=t\varphi(t)^{1/m}$. Then we have 
\begin{equation}\label{E1}
\int_0^t\Bigl|\frac{d\phi}{dt}\Bigr|dt=(\sign t^{m-1})\int_0^t\frac{u^{m-1}}{(m-1)!}|\BS{T}(u)|\,du=(\sign{t^{m-1}})\frac{s^m}{m!}.
\end{equation}
which shows that $\pm s^m/m!$ is an arc length parameter. 

Remark that $d\phi/ds$ is divided by $s^{m-1}/(m-1)!$ and its quotient is non-zero. 
We denote the quotient by $\BS{t}$. 
Differentiating \eqref{E1} by $s$, we obtain that 
\begin{equation}\label{ACparmeter}
\Bigl|\frac{d\phi}{dt}\Bigr|\frac{dt}{ds}=(\sign t^{m-1})\frac{s^{m-1}}{(m-1)!},
\text{ and thus } \ 
\Bigl|\frac{d\phi}{ds}\Bigr|=(\sign s^{m-1})\frac{s^{m-1}}{(m-1)!}.
\end{equation}
This implies that $|\BS{t}|=1$. 
\end{proof}
This is a slight variant of Theorem 1.1 in \cite{Fukui2017}.

We call $s$ the {\bf curvature parameter} of $\phi$. 
In \eqref{ACparmeter}, we assume that $dt/ds$ is positive, which means that 
the orientation of the curve given by the parameter  $s$ agrees with that given by the parameter $t$. 
If $s$ is a solution to the ordinary differential equation 
\begin{equation*}
\frac{s^{m-1}}{(m-1)!}\frac{ds}{dt}=\Bigl|\frac{d\phi}{dt}\Bigr|,
\end{equation*}
then either $s$ or $-s$ is a curvature parameter compatible with the orientation of the curve given by the parameter $t$.   

Let $\BS{e}_1=(1,0)$ and $\BS{e}_2=(0,1)$. 
We assume $\BS{t}|_{s=0}=\BS{e}_1$ without loss of generality.  
We take $\BS{n}:\BB{R},0\longrightarrow\BB{R}^2$ so that $\BS{t}$, $\BS{n}$ form an oriented orthogonal frame of $\BB{R}^2$.
We define the function $\kappa$ with the following differential equation: 
\begin{equation}\label{FS}
\frac{d}{ds}
\begin{pmatrix}
\BS{t}\\
\BS{n}
\end{pmatrix}
=
\begin{pmatrix}
0&\kappa\\
-\kappa&0
\end{pmatrix}
\begin{pmatrix}
\BS{t}\\
\BS{n}
\end{pmatrix},\quad
\binom{\BS{t}}{\BS{n}}\bigg|_{s=0}=\binom{\BS{e}_1}{\BS{e}_2},
\end{equation}
where $\kappa=\kappa(s)$ is a $C^\infty$-function. 

If  a function $\kappa$ is given, then we can define $\BS{t}$ and $\BS{n}$ as a solution to \eqref{FS}, and 
we recover a plane curve $\phi$ integrating \eqref{PT}.
This curve $\phi$ is uniquely determined up to motion, 
when the function $\kappa$ and the multiplicity $m$ are given. 

Setting $\phi_m$ as 
\begin{equation}\label{Def:Phi_m}
\phi_m(s)=\int_0^s\frac{s^{m-1}}{(m-1)!}\,\BS{t}\,ds,
\end{equation}
the curve $\phi$ is congruent to $\phi_m$. 
We remark its curvature is
$$
\frac{\kappa(s)}{s^{m-1}/(m-1)!}.
$$ 
When we have the Taylor expansion of $\BS{t}$ as $\sum_{k\ge0}\BS{t}_k\,s^k/k!$,
the Taylor expansion of $\phi_m$ is given as follows:
\begin{align}\label{Taylor:phi}
\sum_{k=0}^\infty\int\frac{s^{m-1}}{(m-1)!}\,\BS{t}_k\,\frac{s^k}{k!}\,ds
=\sum_{k=0}^\infty\BS{t}_k\,\frac{\binom{m+k-1}{m-1}\,s^{m+k}}{(m+k)!}.
\end{align}

When the Taylor expansion of $\kappa$ is given as $\sum_{i=0}^\infty\kappa_i\,s^i/i!$, 
it determines the Taylor expansion of $\BS{t}$ and thus that of $\phi_m$. 
We will compute several Taylor coefficients of $\phi_m$. 
We first remark that the Taylor expansion of 
$\left(\begin{smallmatrix}
\BS{t}\\ \BS{n}
\end{smallmatrix}\right)$
is that of
\begin{equation}\label{Formula:T}
\exp(\theta J)=
\sum_{j\ge0}\frac{(\theta J)^j}{j!},
\quad \theta=\int_0^s\kappa\,ds, \quad 
J=
\begin{pmatrix}
0&1\\
-1&0
\end{pmatrix}, 
\end{equation}
since this is the solution to \eqref{FS} when $\kappa$ is analytic. 

If the Taylor expansion of $\theta$ is $\sum_{i=1}^\infty\theta_i\,s^i/i!$, then we have $\theta_i=\kappa_{i-1}$.
For the sake of simplicity,
we will work using the coefficients $\theta_i$ instead of $\kappa_i$ in calculation below.
Since 
$J^2=
\left(\begin{smallmatrix}
-1&0\\0&-1
\end{smallmatrix}\right) $,  
$J^3=
\left(\begin{smallmatrix}
0&-1\\1&0
\end{smallmatrix}\right)$, 
$J^4=
\left(\begin{smallmatrix}
1&0\\0&1
\end{smallmatrix}\right)$, 
we have 
\begin{align}\label{Formula:Tk}
\BS{t}_k
=\Bigl(\sum_{i\ge0}(-1)^i\frac{[\theta^{2i}]_k}{(2i)!}\Bigr)\BS{e}_1
+\Bigl(\sum_{i\ge0}(-1)^i\frac{[\theta^{2i+1}]_k}{(2i+1)!}\Bigr)\BS{e}_2
\end{align}
by \eqref{Formula:T} where 
$[\theta^j]_k$ is defined by $\theta^j=\sum_{k\ge0}[\theta^j]_k s^k/k!$, 
that is, 
$$
[\theta^j]_k=k!\sum_{j_1+2j_2+\dots+kj_k=k}
\Bigl[\begin{matrix}{j}\\{j_1\ \dots\ j_k}\end{matrix}\Bigr]
\frac{\theta_1^{j_1}\theta_2^{j_2}\cdots\theta_k^{j_k}}
{1!^{j_1}2!^{j_2}\cdots k!^{j_k}}
$$
where $\left[\begin{smallmatrix}{j}\\{j_1\ \dots\ j_k}\end{smallmatrix}\right]=\frac{j!}{j_1!\cdots j_k!}$, if $j_1+\cdots+j_k=j$; $0$, otherwise.

For the reader's reference, we present the first few terms of the Taylor expansion of $\BS{t}$.
$$
\BS{t}
=
\binom10
+\binom0{\theta_1}s
+\binom{-\theta_1^2}{\theta_2}\frac{s^2}{2!}
+\binom{-3\theta_1\theta_2}{\theta_3-\theta_1^3}\frac{s^3}{3!}
+\binom{\theta_1^4-3\theta_2^2-4\theta_1\theta_3}{\theta_4-6\theta_1^2\theta_2}\frac{s^4}{4!}
+o(s^4).
$$
When $\theta_1=0$, it looks like 
$$
\BS{t}=
\binom10
+\binom0{\theta_2}\frac{s^2}{2!}
+\binom0{\theta_3}\frac{s^3}{3!}
+\binom{-3\theta_2^2}{\theta_4}\frac{s^4}{4!}
+\binom{-10\theta_2\theta_3}{\theta_5}\frac{s^5}{5!}
+\binom{-5(3\theta_2\theta_4+2\theta_3^2)}{\theta_6-15\theta_2^3}\frac{s^6}{6!}
+o(s^6).
$$

In \S\ref{4}, we use the following lemma:
\begin{lem}\label{Lemma:Tk1}
Let us assume that 
\begin{equation}\label{Def:Theta}
\theta=
\theta_m\frac{s^m}{m!}+\theta_{2m}\frac{s^{2m}}{(2m)!}+\cdots+\theta_{lm}\frac{s^{lm}}{(lm)!}+
\theta_p\frac{s^{p}}{p!}+\theta_{p+1}\frac{s^{p+1}}{(p+1)!}+o(s^{p+1}) 
\end{equation}
with $lm<p<(l+1)m$.  Then, we have the following:
\begin{itemize}
\item[\rm (i)] $\BS{t}_0=\BS{e}_1$.
\item[\rm (ii)] $\BS{t}_k=\BS{0}$ if $k\not\equiv0\bmod m$, $1\le k<p$.
\item[\rm (iii)] $\BS{t}_k=\theta_k\,\BS{e}_2$ if $k\not\equiv0\bmod m$, $p\le k<p+m$.
\item[\rm (iv)] 
$\BS{t}_k=\theta_k\,\BS{e}_2$
 if $k\not\equiv0\bmod m$, $p<k<2p$, and 
$\theta_{k-jm}=0$ for $j$ with $0<j<k/m$.
\end{itemize}
\end{lem}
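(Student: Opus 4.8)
The plan is to read everything off the explicit formula \eqref{Formula:Tk}, which expresses each $\BS{t}_k$ as a combination of the scalars $[\theta^j]_k$, and to control these by examining which products of Taylor coefficients of $\theta$ can contribute. By the multinomial description given just after \eqref{Formula:Tk}, the quantity $[\theta^j]_k$ is (up to the factor $k!$) a sum of monomials $\theta_{i_1}\cdots\theta_{i_j}$ over compositions $i_1+\cdots+i_j=k$ with each $i_a\ge1$; hence $[\theta^j]_k$ can be nonzero only when $k$ is a sum of $j$ indices all lying in the support $S=\{\,i\ge1:\theta_i\ne0\,\}$ of $\theta$. Under hypothesis \eqref{Def:Theta}, the part of $S$ below $p$ consists solely of the multiples $m,2m,\dots,lm$ of $m$, while every other element of $S$ satisfies $i\ge p$. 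This dichotomy is the engine of the whole argument.

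Part (i) is immediate: at $k=0$ only $j=0$ contributes, giving $[\theta^0]_0=1$ in the $\BS{e}_1$-slot and nothing in the $\BS{e}_2$-slot. For part (ii), with $1\le k<p$ and $k\not\equiv0\bmod m$, any composition $i_1+\cdots+i_j=k$ forces each $i_a\le k<p$, so each $i_a$ is a multiple of $m$; then $k$ itself is a multiple of $m$, a contradiction. Thus $[\theta^j]_k=0$ for every $j\ge1$, and also for $j=0$ since $k\ge1$, so both components of $\BS{t}_k$ vanish.

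For part (iii) I would argue by a degree count. Fix $k$ with $p\le k<p+m$ and $k\not\equiv0\bmod m$, and suppose a composition $i_1+\cdots+i_j=k$ realizes a nonzero $[\theta^j]_k$ with $j\ge2$. Not all parts can be multiples of $m$, else $k\equiv0\bmod m$; so at least one part is $\ge p$. Since $l\ge1$ gives $p>lm\ge m$ and hence $p+m<2p$, two parts of size $\ge p$ would force $k\ge2p>p+m$, which is impossible; therefore exactly one part is $\ge p$ and the remaining part (there is at least one, as $j\ge2$) is a multiple of $m$, so $k\ge p+m$, again impossible. Thus $[\theta^j]_k=0$ for all $j\ge2$; together with $[\theta^0]_k=0$ this kills the entire $\BS{e}_1$-slot, while in the $\BS{e}_2$-slot only $j=1$ survives and contributes $[\theta^1]_k=\theta_k$. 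Hence $\BS{t}_k=\theta_k\,\BS{e}_2$.

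Part (iv) is the crux, and the main obstacle is handling the enlarged range $p<k<2p$, where the single-large-part configuration is no longer excluded by size alone. Again take $j\ge2$ and a composition $i_1+\cdots+i_j=k$ with all $\theta_{i_a}\ne0$. As before not all parts are multiples of $m$, and two parts of size $\ge p$ would give $k\ge2p$, excluded since $k<2p$; so exactly one part, say $i_1$, satisfies $i_1\ge p$, while the remaining $j-1\ge1$ parts are positive multiples of $m$ summing to $cm$ with $c\ge j-1\ge1$. Then $i_1=k-cm$ with $0<c<k/m$, and the hypothesis $\theta_{k-jm}=0$ for $0<j<k/m$ forces $\theta_{i_1}=\theta_{k-cm}=0$, a contradiction. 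Hence $[\theta^j]_k=0$ for all $j\ge2$, and exactly as in (iii) the even slot vanishes while only the $j=1$ term of the $\BS{e}_2$-slot survives, giving $\BS{t}_k=\theta_k\,\BS{e}_2$. The delicate point throughout is to track precisely which indices may exceed $p$ and to recognise that the extra hypothesis in (iv) is tuned exactly to annihilate the one surviving large-part configuration.
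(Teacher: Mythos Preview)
Your proof is correct and follows essentially the same approach as the paper's. Both arguments exploit the multinomial expansion underlying \eqref{Formula:Tk} and the fact that, under hypothesis \eqref{Def:Theta}, any index in the support of $\theta$ is either a multiple of $m$ below $p$ or at least $p$; the paper phrases the combinatorics via exponent vectors $(j_m,\dots,j_{lm},j_p,\dots)$ while you use compositions $i_1+\cdots+i_j=k$, and in part (iv) the paper writes out the intermediate formula explicitly before applying the vanishing hypothesis whereas you argue the vanishing of $[\theta^j]_k$ for $j\ge2$ directly, but these are stylistic rather than substantive differences.
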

\begin{proof}
The case that $k<p$ is clear. \\
(iii): 
We consider $(j_m,j_{2m},\dots,j_{lm},j_p,\dots,j_k)$ so that 
\begin{align*}
k=&m(j_m+2j_{2m}+\dots+lj_{lm})+pj_p+(p+1)j_{p+1}+\cdots+kj_k.
\end{align*}
If $k\not\equiv 0\bmod m$ and $p\le k<p+m$, such $(j_m,j_{2m},\dots,j_{lm},j_p,\dots,j_k)$ must be $(0,\dots,0,1)$, which implies (iii). 
\\
(iv): 
Assume that $k\not\equiv0\bmod m$ and $p<k<2p$. 
Applying a similar argument to \eqref{Formula:Tk}, we obtain that 
\begin{align*}
\BS{t}_k
=&k!\sum_{r:\text{odd}}
\frac{(-1)^{\frac{r+1}2}}{(r+1)!}
\sum_{j_m+2j_{2m}+\cdots+lj_{lm}=j}
\Bigl[\begin{matrix}r+1\\ {j_m\ j_{2m}\cdots\ j_{lm}\ 1}\end{matrix}\Bigr]
\frac{\theta_{m}^{j_m}\theta_{2m}^{j_{2m}}\cdots\theta_{lm}^{j_{lm}}\theta_{k-jm}}
{m!^{j_m}(2m)!^{j_{2m}}\cdots (lm)!^{j_{lm}}(k-jm)!}
\,\BS{e}_1\\
+&
k!\sum_{r:\text{even}}
\frac{(-1)^{\frac{r}2}}{(r+1)!}
\sum_{j_m+2j_{2m}+\cdots+lj_{lm}=j}
\Bigl[\begin{matrix}r+1\\ {j_m\ j_{2m}\cdots\ j_{lm}\ 1}\end{matrix}\Bigr]
\frac{\theta_{m}^{j_m}\theta_{2m}^{j_{2m}}\cdots\theta_{lm}^{j_{lm}}\theta_{k-jm}}
{m!^{j_m}(2m)!^{j_{2m}}\cdots (lm)!^{j_{lm}}(k-jm)!}\BS{e}_2.
\end{align*}
\hspace{-7pt}
We thus conclude that the coefficient of $\BS{e}_1$ is zero and  
the coefficient of $\BS{e}_2$ is $\theta_k$ if $\theta_{k-jm}=0$ for all $j$ with $0<j<k/m$.
\end{proof}

\begin{rem}
Assume that \eqref{Def:Theta} holds with $l\ge1$, 
we can consider a nonsingular curve
whose Taylor expansion is given by 
\begin{align*}
\sum_{m|k}\int\frac{u^{m-1}}{(m-1)!}\,\BS{t}_k\,\frac{u^{k/m}}{k!}\,du
=\sum_{i=1}^\infty\BS{t}_{m(i-1)}\,\frac{\binom{mi-1}{m-1}\,u^i}{(mi)!},
\end{align*}
as a nonsingular approximation of $\phi_m$. 
Since $\BS{t}_m=\theta_m\BS{e}_2$, the 2-jet of this curve is  
$$
\frac{u}{m!}\BS{e}_1+\frac{\theta_m}2\Bigl(\frac{u}{m!}\Bigr)^2\BS{e}_2.
$$
This implies that the curvature of the nonsingular curve is $\theta_m$ at $u=0$.
Thus, in most cases, $\theta_m$ behaves like the curvature at the singularity; 
that is, $\theta_m^{-1}$ behaves like the radius of curvature --- namely, the parallel curve at this distance has a degenerate singularity whenever $\theta_m\ne0$.  
\end{rem}

\section{Criteria of singularities}\label{3}
Assume that a function $\kappa$ is given, and define a frame $\BS{t}$, $\BS{n}$ by \eqref{FS}.
Let  $\sum_{i\ge0}\kappa_is^i/i!$ denote the Taylor expansion of $\kappa$. 
We define $\phi_m$ by \eqref{Def:Phi_m}.
\begin{thm}\label{Thm:S} 
\begin{itemize}
\item[\rm (i)] The map germ $\phi_2$ defines $A_{2k}$ singularity at $0$ if and only if 
$\kappa_0=\kappa_2=\dots=\kappa_{2k-4}=0$ and $\kappa_{2k-2}\ne0$.
\item[\rm (ii)] The map germ $\phi_3$ defines 
\begin{itemize} 
\item $E_{6k}$ singularity at $0$ if and only if 
$\kappa_{i-1}=0$ $(i\not\equiv0\bmod3, \ i<3k-2)$ and $\kappa_{3k-1}\ne0$.
\item $E_{6k+2}$ singularity at $0$ if and only if  
$\kappa_{i-1}=0$ $(i\not\equiv0\bmod3, \ i<3k-1)$ and $\kappa_{3k-2}\ne0$.  
\end{itemize}
\item[\rm (iii)] The map germ $\phi_4$ defines 
\begin{itemize}
\item $W_{12}$ singularity at $0$ if and only if $\kappa_0\ne0$.
\item $W^{\#}_{1,2q-1}$ singularity at $0$ if and only if 
$\kappa_0=0$, $ \kappa_1\ne0$, $\kappa_2=\kappa_4=\cdots=\kappa_{2q-2}=0$ and $\kappa_{2q}\ne0.$
\item $W_{18}$ singularity at $0$ if and only if 
$\kappa_0=\kappa_1=0$ and $\kappa_2\ne0. $
\end{itemize}
\end{itemize}
\end{thm}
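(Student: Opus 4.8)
The plan is to turn each equivalence into an explicit computation of the low-order jet of $\phi_m$ and then to bring $\phi_m$ into the matching normal form of Theorem~\ref{Thm:ClassificationASimple} by coordinate changes in source and target. By \eqref{Formula:T} the frame is $\BS{t}=(\cos\theta,\sin\theta)$ with $\theta=\int_0^s\kappa\,ds$, so $\theta_i=\kappa_{i-1}$ and, by \eqref{Def:Phi_m},
\begin{equation*}
\phi_m(s)=\Bigl(\int_0^s\tfrac{u^{m-1}}{(m-1)!}\cos\theta\,du,\ \int_0^s\tfrac{u^{m-1}}{(m-1)!}\sin\theta\,du\Bigr).
\end{equation*}
The hypotheses in each case prescribe exactly which low-order $\theta_i$ vanish; in the notation of \eqref{Def:Theta} they fix the degree $p$ of the first coefficient $\theta_p$ with $p\not\equiv0\bmod m$. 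First I would record, via Lemma~\ref{Lemma:Tk1} together with \eqref{Taylor:phi}, that the first coordinate of $\phi_m$ agrees to the relevant order with $\tfrac{s^m}{m!}$ times a unit (its surviving Taylor coefficients occur only in degrees $\equiv0\bmod m$, because $\cos\theta$ depends on the multiple-of-$m$ part of $\theta$ through degree $2p$), while the second coordinate carries multiple-of-$m$ terms together with a first non-multiple term in degree $m+p$.

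The \emph{if} direction then proceeds uniformly. I would reparametrise the source by $t=s\,u(s)^{1/m}$, where $u$ is the unit above, so the first coordinate becomes $t^m$; this is a genuine diffeomorphism germ since multiplicity is preserved. Next apply a target diffeomorphism $(x,y)\mapsto(x,\,y-g(x))$ to delete every term of the second coordinate whose order is a multiple of $m$, each such term being a smooth function of $x=t^m$. After these reductions the leading surviving term of the second coordinate sits in degree $m+p$, matching the leading exponent of the relevant normal form, and a final rescaling normalises its coefficient. This already settles $A_{2k}$ and produces the leading behaviour in the $E$- and $W$-cases. For $A_{2k}$ and the $E$-series the remaining work is controlled by finite $\mathcal A$-determinacy of these simple germs: it suffices to match a sufficiently high jet, and the stated vanishing/nonvanishing pattern pins that jet down. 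The residue-$2\bmod 3$ secondary term of the $E_{6k}$ normal form (and its $E_{6k+2}$ analogue, carrying the sign $\varepsilon_p$) is read off by continuing the same bookkeeping one step further into $\sin\theta$. Parts (i) and (ii) thus reduce to the multiplicity-two and multiplicity-three refinements, Theorems~\ref{Thm:M2} and \ref{Thm:M3}.

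The multiplicity-four statements in part (iii) are the delicate part, and I expect $W^{\#}_{1,2q-1}$ to be the main obstacle. Its normal form $(t^4,t^6+t^{2q+5})$ has second coordinate of order $6\equiv2\bmod4$, which forces the defining equation into the degenerate shape $(y^2+x^3)^2+\cdots$, so the order alone no longer determines the type and one must control the secondary term of order $2q+5$. Here the vanishing pattern $\kappa_2=\kappa_4=\cdots=\kappa_{2q-2}=0$, $\kappa_{2q}\ne0$ (that is $\theta_3=\theta_5=\cdots=\theta_{2q-1}=0$, $\theta_{2q+1}\ne0$) is precisely what places the first surviving odd-order term of the sine integral in degree $2q+5$. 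The genuine difficulty is that the multiple-of-$4$ coefficients $\theta_4,\theta_8,\dots$ are left unconstrained, so one must verify that their interaction with the odd-order terms of $\sin\theta$ produces no spurious lower odd term and does not perturb the $(y^2+x^3)^2$ normalisation; this is exactly where careful use of Lemma~\ref{Lemma:Tk1} (iv) is needed. I would resolve this either by the explicit multiplicity-four expansion of Theorem~\ref{Thm:M4}, or, when the direct reduction becomes unwieldy, by passing to the defining equation and comparing $\mathcal K$-types through Lemma~\ref{BG}, which converts the $\mathcal A$-classification of the parametrisation into the $\mathcal R$-classification of $f$ in the Arnold table.

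For the \emph{only if} direction in every case I would argue by exhaustion. For each fixed multiplicity the listed conditions are mutually exclusive and, taken together, exhaust the $\mathcal A$-simple types occurring in Theorem~\ref{Thm:ClassificationASimple}; hence a germ realised by one condition cannot satisfy any competing condition. Consequently, once the \emph{if} directions are established, if $\phi_m$ is of a prescribed type it must meet that type's conditions (otherwise it would meet, and hence equal, a different type), and the converses follow at once. The one point deserving vigilance is confirming exhaustiveness at the boundary between the $W$-families, which again reduces to the careful multiplicity-four analysis flagged above.
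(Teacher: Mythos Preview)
Your proposal is correct and follows the paper's route: both reduce Theorem~\ref{Thm:S} to Theorems~\ref{Thm:M2}--\ref{Thm:M4} via the substitution $\theta_i=\kappa_{i-1}$, and your exhaustion argument over the Bruce--Gaffney list is exactly how the paper (implicitly) obtains the ``only if'' direction.

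One caveat on your sketch of the reductions behind Theorems~\ref{Thm:M2}--\ref{Thm:M4}. After you normalise the first coordinate to $t^m$ by a source change, the target shear $(x,y)\mapsto(x,\,y-g(x))$ alone cannot remove the second-coordinate terms whose degrees lie \emph{outside} the numerical semigroup $\langle m,\,m{+}p\rangle$; for instance, in the $E_{6k}$ case the residue-$2\bmod 3$ terms of degree $<6k$ are such gaps, and for $W^{\#}_{1,2q-1}$ your parenthetical claim that the first coordinate has only degree-$\equiv 0\bmod 4$ terms is false (the $\theta_2,\theta_6,\dots$ contribute at all even degrees). The paper handles these gap terms not by a single up-front normalisation but by interleaving further source changes $h_j$ with compensating target changes, organised systematically through Lemma~\ref{Lemma:Key} and the accompanying tables. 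Your appeal to finite $\mathcal A$-determinacy is correct as a principle, but the mechanism that actually realises the determined jet is precisely this interleaved use of $h_j$, $p_{i,j}$, $q_{i,j}$, which your two-step scheme does not yet supply.
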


It is more convenient to state our criteria of singularities for $\mathcal A$-simple singularities 
in terms of the Taylor coefficients of $\theta=\int_0^s\kappa\,ds$ (see \eqref{Formula:T}). 
Let $\sum_{i\ge1}\theta_is^i/i!$ denote the Taylor expansion of $\theta$.
Our criteria given below imply Theorem \ref{Thm:S} immediately, since $\kappa_{i-1}=\theta_i$. 
\begin{thm}\label{Thm:M2}
The map germ $\phi_2$ defines $A_{2k}$ singularity at $0$ if and only if 
\begin{equation}\label{Condition:Ak}
\theta_i=0,\quad i\not\equiv0\bmod2,\ i<2k-1;\qquad \theta_{2k-1}\ne0. 
\end{equation}
If these conditions hold, $\phi_2$ is $\mathcal L$-equivalent to $(t^2,t^{2k+1})$ at $0$.  
\end{thm}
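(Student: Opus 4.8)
The plan is to prove the stronger $\mathcal{L}$-equivalence assertion directly, from which the $A_{2k}$ statement follows via Theorem \ref{Thm:ClassificationASimple}; the ``only if'' direction will then be extracted by an exhaustion argument. Throughout I put $m=2$ and write $\phi_2(s)=(X(s),Y(s))$.

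First I would read off the relevant Taylor coefficients of $\phi_2$. Assuming \eqref{Condition:Ak}, the expansion of $\theta$ is of the form \eqref{Def:Theta} with $m=2$, $l=k-1$ and $p=2k-1$, so Lemma \ref{Lemma:Tk1} applies and yields $\BS{t}_i=\BS{0}$ for odd $i<2k-1$ together with $\BS{t}_{2k-1}=\theta_{2k-1}\BS{e}_2$. Substituting into \eqref{Taylor:phi}, each even-index $\BS{t}_i$ contributes an even power $s^{i+2}$ and each odd-index $\BS{t}_i$ an odd power $s^{i+2}$. Hence both $X$ and $Y$ have the shape (even power series in $s$) $+\,s^{2k+1}\cdot$(even power series in $s$). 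In the first coordinate the odd part is suppressed through order $2k+1$: the odd $\BS{t}_i$ with $i<2k-1$ vanish and $\BS{t}_{2k-1}$ has no $\BS{e}_1$-component, so the first surviving odd contribution comes from $\BS{t}_{2k+1}$ at order $s^{2k+3}$. In the second coordinate the term $\BS{t}_{2k-1}=\theta_{2k-1}\BS{e}_2$ produces the leading odd contribution $\theta_{2k-1}\tfrac{2k}{(2k+1)!}\,s^{2k+1}$. Finally, the leading term of $X$ is $\tfrac{s^2}{2}$ and the even part of $Y$ begins only at order $s^4$.

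Next, since an even $C^\infty$-germ in $s$ is a smooth function of $s^2$ (Whitney's lemma), I can write $X(s)=x_{\mathrm{ev}}(s^2)+s^{2k+1}x_{\mathrm{od}}(s^2)$ and $Y(s)=y_{\mathrm{ev}}(s^2)+s^{2k+1}y_{\mathrm{od}}(s^2)$ with $x_{\mathrm{ev}},x_{\mathrm{od}},y_{\mathrm{ev}},y_{\mathrm{od}}$ smooth, where by the previous paragraph $x_{\mathrm{ev}}'(0)=\tfrac12$, $x_{\mathrm{od}}(0)=0$, $y_{\mathrm{ev}}'(0)=0$, and $y_{\mathrm{od}}(0)=\theta_{2k-1}\tfrac{2k}{(2k+1)!}$. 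Setting $\psi(a,b)=\bigl(x_{\mathrm{ev}}(a)+b\,x_{\mathrm{od}}(a),\,y_{\mathrm{ev}}(a)+b\,y_{\mathrm{od}}(a)\bigr)$ gives $\psi(s^2,s^{2k+1})=\phi_2(s)$, and
\[
D\psi(0)=\begin{pmatrix} x_{\mathrm{ev}}'(0) & x_{\mathrm{od}}(0)\\ y_{\mathrm{ev}}'(0) & y_{\mathrm{od}}(0)\end{pmatrix}=\begin{pmatrix} \tfrac12 & 0\\ 0 & y_{\mathrm{od}}(0)\end{pmatrix}
\]
is invertible precisely when $\theta_{2k-1}\ne0$. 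Thus $\psi$ is a diffeomorphism germ and $\phi_2$ is $\mathcal{L}$-equivalent to $(s^2,s^{2k+1})$, i.e.\ it defines an $A_{2k}$ singularity.

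For the converse, suppose $\phi_2$ defines an $A_{2k}$ singularity. If every odd coefficient $\theta_{2j-1}$ vanished, then $\theta$ would be even, whence $\BS{t}$ is even, $\phi_2(s)=\phi_2(-s)$, and $\phi_2$ factors through $s\mapsto s^2$ as a regular curve germ; after straightening it would be $\mathcal{L}$-equivalent to $(s^2,0)$, whose image germ is a nonsingular half-arc and therefore not diffeomorphic to the cuspidal image of $(t^2,t^{2k+1})$, a contradiction. Hence a smallest index $2j-1$ with $\theta_{2j-1}\ne0$ exists, and the vanishing conditions \eqref{Condition:Ak} hold with $k$ replaced by $j$; by the ``if'' direction $\phi_2$ is then $A_{2j}$, and since the $A_{2\ell}$ are pairwise $\mathcal{A}$-inequivalent (distinct Milnor numbers) we conclude $j=k$, which is exactly \eqref{Condition:Ak}. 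The one point requiring genuine care --- and the main obstacle --- is the smooth realisation of $\psi$: one must verify through Whitney's lemma that the functions $x_{\mathrm{ev}},\dots,y_{\mathrm{od}}$ are truly $C^\infty$ rather than merely formal, and that the common factor $s^{2k+1}$ can be extracted smoothly from each odd part, which is precisely where the vanishing hypotheses on the $\theta_i$ are used essentially.
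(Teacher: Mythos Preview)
Your argument is correct and genuinely different from the paper's. The paper proceeds by truncation: it uses the semigroup $S=\{2i+(2k+1)j\}$ together with Lemma~\ref{Lemma:Key} to kill each Taylor coefficient of $\phi_2$ in turn by target coordinate changes $p_{i,j},q_{i,j}$, and then invokes finite determinacy of $A_{2k}$ to pass from the finite jet to the germ. Your route is more structural: you split $\phi_2$ into even and odd parts, use Whitney's lemma to write the even part as a function of $s^2$, use Hadamard's lemma (enabled by Lemma~\ref{Lemma:Tk1}) to factor $s^{2k+1}$ out of the odd part smoothly, and then read off the $\mathcal L$-equivalence $\psi$ directly from the invertible Jacobian. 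This avoids any appeal to finite determinacy in the ``if'' direction and gives a genuinely smooth (not merely formal) diffeomorphism in one stroke; the trade-off is that it exploits the very simple structure of the semigroup $\langle 2,2k+1\rangle$ and would not generalise as cleanly to the $E$- and $W$-series, for which the paper's coefficient-alignment machinery is designed.

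One small inaccuracy in your converse: in the $C^\infty$ category, ``every odd Taylor coefficient $\theta_{2j-1}$ vanishes'' does \emph{not} imply that $\theta$ is even---only that $\theta(s)-\theta(-s)$ is flat at~$0$. The conclusion you want still follows: flatness of the odd part of $\theta$ forces flatness of the odd part of $\phi_2$, so $\phi_2$ agrees with an even map $\Phi(s^2)$ (with $\Phi$ an immersion) to infinite order; since $A_{2k}$ is finitely determined, $\phi_2$ would then be $\mathcal A$-equivalent to $(s^2,0)$, which is not injective while $(t^2,t^{2k+1})$ is---contradiction. So the fix is simply to insert the word ``flat'' and invoke finite determinacy once, exactly as the paper does at the start of \S\ref{4}.
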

The following table summarises our criteria for the $A_{2k}$ singularity with $k\le4$.
\begin{center}
\begin{tabular}{c|l|l}
\hline
$m=2$&normal form &condition\\
\hline
$A_2$
&$(t^2,t^3)$
&$\theta_1\ne0$\\
\hline
$A_4$
&$(t^2,t^5)$
&$\theta_1=0$, $\theta_3\ne0$\\
\hline
$A_6$
&$(t^2,t^7)$
&$\theta_1=\theta_3=0$, $\theta_5\ne0$\\
\hline
$A_8$
&$(t^2,t^9)$
&$\theta_1=\theta_3=\theta_5=0$, $\theta_7\ne0$\\
\hline
\end{tabular}
\end{center}
\begin{thm}\label{Thm:M3}
\begin{itemize}
\item[\rm (i)] 
The map germ $\phi_3$ defines $E_{6k}$ singularity at $0$ if and only if 
\begin{equation}\label{Condition:E_6k}
\theta_i=0,\ i\not\equiv0\bmod3,\ i< 3k-2; \qquad\theta_{3k-2}\ne0.
\end{equation}
Actually under the assumption \eqref{Condition:E_6k}, 
$\phi_3$ is $\mathcal A$-equivalent to 
\begin{itemize}
\item[\rm(ia)] $(t^3,t^{3k+1}+\varepsilon_pt^{3(k+p)+2})$ at $0$ if 
\begin{equation}\label{ia}
\theta_{3(k+j)-1}=0,\ 0\le j<p; \quad \theta_{3(k+p)-1}\ne0.
\end{equation}
Moreover, $\varepsilon_p$ is the sign of $\tfrac{\theta_{3(k+p)-1}}{\theta_{3k-2}}$ when $p$ is odd.
Here $p$ is an integer with $0\le p \le k-2$;
\item[\rm(ib)] $(t^3,t^{3k+1})$ if 
\begin{equation}\label{ib}
\theta_{3(k+j)-1}=0,\ 0\le j\le k-2.
\end{equation}
\end{itemize}
\item[\rm (ii)] 
The map $\phi_3$ defines $E_{6k+2}$ singularity at $0$ if and only if 
\begin{equation}\label{Condition:E_6k+2}
\theta_i=0,\ i\not\equiv0\bmod3,\ i< 3k-1;\quad\theta_{3k-1}\ne0.
\end{equation}
Actually under the assumption \eqref{Condition:E_6k+2}, 
$\phi_3$ is $\mathcal A$-equivalent to 
\begin{itemize}
\item[\rm (iia)] $(t^3,t^{3k+2}+\varepsilon_{p+1}t^{3(k+p)+4})$ if 
\begin{equation}\label{iia}
\theta_{3(k+j)+1}=0, \ 0\le j<p, \ \text{ and } \ \theta_{3(k+p)+1}\ne0,
\end{equation} 
Moreover, $\varepsilon_{p+1}$ is the sign of $\tfrac{\theta_{3(k+p)+1}}{\theta_{3k-1}}$ when $p+1$ is odd.
Here $p$ is an integer with $0\le p\le k-2$; 
\item[\rm (iib)] $(t^3,t^{3k+2})$ if  
\begin{equation}\label{iib}
\theta_{3(k+j)+1}=0, \quad 0\le j\le k-2.
\end{equation}
\end{itemize}
\end{itemize}
\end{thm}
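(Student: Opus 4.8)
The plan is to compute the Taylor expansion of $\phi_3$ under each hypothesis via \eqref{Taylor:phi} and Lemma~\ref{Lemma:Tk1}, and then reduce the resulting parametrised germ to the corresponding Bruce--Gaffney normal form by explicit source and target coordinate changes. I write $j$ for the running Taylor index to avoid clashing with the parameter $k$. First I would apply Lemma~\ref{Lemma:Tk1} with $m=3$ and $p=3k-2$: parts (i)--(iii) give $\BS{t}_0=\BS{e}_1$, then $\BS{t}_j=\BS{0}$ for $j\not\equiv0\bmod3$ with $j<3k-2$, and $\BS{t}_{3k-2}=\theta_{3k-2}\BS{e}_2$. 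Since $\theta$ starts at order $s^3$, the $\BS{e}_1$-component of $\BS{t}_j$ (coming from even powers of $\theta$) only reappears at $j=6$, contributing to $\phi_3$ at order $s^9$; hence by \eqref{Taylor:phi} the first coordinate of $\phi_3$ is $\tfrac{s^3}{6}$ times a unit, while the second coordinate has leading term a nonzero multiple of $s^{3k+1}$. This matches the shape $(t^3,\,t^{3k+1}+\cdots)$ of the $E_{6k}$ normal form; the analogous computation with $p=3k-1$ under \eqref{Condition:E_6k+2} gives $\BS{t}_{3k-1}=\theta_{3k-1}\BS{e}_2$ and the shape $(t^3,\,t^{3k+2}+\cdots)$ for $E_{6k+2}$.

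Next I would normalise. As the first coordinate is $s^3\times(\text{unit})$, a source reparametrisation together with a scaling of the $X$-axis makes it exactly $t^3$; the second coordinate becomes a power series $y(t)=\sum_j b_jt^j$. Target diffeomorphisms $(X,Y)\mapsto(X,\,Y-\sum_r c_rX^r)$ remove every $b_jt^j$ with $3\mid j$, so I may assume $y$ is supported on exponents $\not\equiv0\bmod3$; note that the free coefficients $\theta_3,\theta_6,\dots$ feed only into such removable terms, since $\theta_{3i}$ contributes to $y$ at the exponent $3(i+1)$. The remaining exponents split by residue: those $\equiv1\bmod3$ arise from $\theta_j$ with $j\equiv1$, and those $\equiv2\bmod3$ from $\theta_j$ with $j\equiv2$. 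For $E_{6k}$ the leading term $t^{3k+1}$ is of the first kind; the higher $\equiv1$ terms $t^{3k+4},t^{3k+7},\dots$ can be cleared one by one by source reparametrisations $t\mapsto t(1+at^3+\cdots)$, which act within each residue class while fixing $x=t^3$ after a rescaling, and which preserve the leading exponent in the $\equiv2$ class. What survives is $t^{3k+1}$ together with the $\equiv2$ terms.

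The type is then pinned down by the position and sign of the first surviving $\equiv2$ term. By Lemma~\ref{Lemma:Tk1}(iv), under \eqref{ia} the coefficient $b_{3(k+j)+2}$ is a nonzero multiple of $\theta_{3(k+j)-1}$ whenever the lower interfering coefficients $\theta_{3(k+j)-1-3i}$ vanish, which is exactly what \eqref{Condition:E_6k} and \eqref{ia} guarantee; hence the first nonzero $\equiv2$ term sits at $t^{3(k+p)+2}$ precisely when $\theta_{3(k+j)-1}=0$ for $0\le j<p$ and $\theta_{3(k+p)-1}\ne0$, giving the parameter $p$, while \eqref{ib} is the degenerate subcase where all these vanish and one lands on $(t^3,t^{3k+1})$. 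Rescaling $t\mapsto\lambda t$ and $Y\mapsto\mu Y$ to fix the leading coefficient of $t^{3k+1}$ to $+1$ and tracking the induced action on $b_{3(k+p)+2}$ shows the residual sign equals $\operatorname{sign}\bigl(\theta_{3(k+p)-1}/\theta_{3k-2}\bigr)$; this sign is genuine exactly when $p$ is odd (otherwise it is absorbable and $\varepsilon_p=1$), recovering $\varepsilon_p$ as stated. The $E_{6k+2}$ case is treated symmetrically, with the roles of the residues interchanged: the leading term is $\equiv2$ and the modulus terms $t^{3(k+p)+4}$ are $\equiv1$, arising from $\theta_{3(k+p)+1}$, so \eqref{iia} and \eqref{iib} play the roles of \eqref{ia} and \eqref{ib}.

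The main obstacle I anticipate lies in the reduction step: I must check that the successive reparametrisations and target diffeomorphisms clearing the inessential $\equiv0$ and higher $\equiv1$ terms, and truncating the $\equiv2$ tail beyond $t^{3(k+p)+2}$, can be performed order by order without disturbing the already-normalised lower-order data, so that the procedure terminates and is non-circular. This is precisely where $\mathcal{A}$-simplicity enters: because the relevant deformation directions yield only finitely many classes, the tail is removable and the $\mathcal{A}$-class is determined by the discrete data $(k,p,\varepsilon_p)$. For the converse I would run the forward computation on each normal form in Theorem~\ref{Thm:ClassificationASimple}, observe that distinct $(k,p,\varepsilon_p)$ force distinct vanishing patterns among the $\theta_i$, and conclude by exhaustiveness of the Bruce--Gaffney list, with Lemma~\ref{BG} available to translate between the parametrisation and its defining equation where convenient.
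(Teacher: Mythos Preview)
Your plan is essentially the paper's own: compute the jet of $\phi_3$ via \eqref{Taylor:phi} and Lemma~\ref{Lemma:Tk1}, then reduce to the Bruce--Gaffney normal form by successive source and target changes, with Lemma~\ref{Lemma:Tk1}(iv) pinning down the first surviving modulus term. The paper packages the reduction step through Lemma~\ref{Lemma:Key} and the numerical semigroup $S=\{3i+(3k+1)j\}$ (resp.\ $\{3i+(3k+2)j\}$), assigning to each exponent $e$ the specific coefficient $p_{i,j}$, $q_{i,j}$ or $h_c$ that kills it, and recording these assignments in tables; that bookkeeping is exactly what settles the order-by-order termination issue you flag as the main obstacle.

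One point of imprecision worth tightening: the substitutions $t\mapsto t(1+at^3+\cdots)$ do not preserve $x=t^3$ ``after a rescaling''---they require a genuine target diffeomorphism in $X$---and once the full composite is applied, your heuristic that exponents $\equiv1\bmod3$ in $y$ come only from $\theta_j$ with $j\equiv1$ (and similarly for residue $2$) is not literally true beyond the leading range, since the $\BS{e}_1$-component of $\BS{t}_j$ reappears at $j=3K+1$ (from $\theta_3\theta_{3K-2}$ in $\theta^2$) and feeds into the source change. The paper avoids having to track this provenance by never isolating ``the second coordinate after making the first equal to $t^3$''; instead Lemma~\ref{Lemma:Key} treats the two components symmetrically and shows directly which free coefficient enters each $\tilde a_e$, $\tilde b_e$ linearly.
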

The following table summarises our criteria for the $E$-series singularities. 
\begin{center}
\begin{tabular}{c|l|l}
\hline
$m=3$&normal form &condition\\
\hline
$E_6$
&$(t^3,t^4)$&$\theta_1\ne0$\\
\hline
$E_8$
&$(t^3,t^5)$&$\theta_1=0$, $\theta_2\ne0$\\
\hline
$E_{12}$
&$(t^3,t^7+t^8)$&$\theta_1=\theta_2=0$, $\theta_4\ne0$, $\theta_5\ne0$\\
&$(t^3,t^7)$&$\theta_1=\theta_2=0$, $\theta_4\ne0$, $\theta_5=0$\\
\hline
$E_{14}$
&$(t^3,t^8\pm t^{10})$
&$\theta_1=\theta_2=\theta_4=0$, $\theta_5\ne0$, $\pm\theta_7/\theta_5>0$\\
&$(t^3,t^8)$
&$\theta_1=\theta_2=\theta_4=0$, $\theta_5\ne0$, $\theta_7=0$\\
\hline
$E_{18}$
&$(t^3,t^{10}+t^{11})$&$\theta_1=\theta_2=\theta_4=\theta_5=0$, $\theta_7\ne0$, $\theta_8\ne0$\\
&$(t^3,t^{10}\pm t^{14})$&
$\theta_1=\theta_2=\theta_4=\theta_5=0$, $\theta_7\ne0$, $\theta_8=0$, $\pm\theta_{11}/\theta_7>0$\\
&$(t^3,t^{10})$&
$\theta_1=\theta_2=\theta_4=\theta_5=0$, $\theta_7\ne0$, $\theta_8=\theta_{11}=0$\\
\hline
$E_{20}$
&$(t^3,t^{11}\pm t^{13})$
&$\theta_1=\theta_2=\theta_4=\theta_5=\theta_7=0$, $\theta_8\ne0$, $\pm\theta_{10}/\theta_8>0$\\
&$(t^3,t^{11}+t^{16})$&
$\theta_1=\theta_2=\theta_4=\theta_5=\theta_7=0$, $\theta_8\ne0$, $\theta_{10}=0$, $\theta_{13}\ne0$\\
&$(t^3,t^{11})$&
$\theta_1=\theta_2=\theta_4=\theta_5=\theta_7=0$, $\theta_8\ne0$, $\theta_{10}=\theta_{13}=0$\\
\hline
\end{tabular}
\end{center}

\begin{thm}\label{Thm:M4}
\begin{itemize}
\item[\rm (i)] 
The map germ $\phi_4$ defines $W_{12}$ singularity at $0$ if and only if 
$$
\theta_1\ne0.
$$
Moreover, $\phi_4$ is $\mathcal A$-equivalent to $(t^4,t^5\pm t^7)$ $(\textrm{resp.}~(t^4,t^5))$ at $0$ 
if $\pm w_{12}>0$ $(\textrm{resp.}~w_{12}=0)$
where 
$w_{12}=\theta_1\theta_3-\tfrac{77}{48}\,\theta_2^2+\tfrac52\,\theta_1^4$.
\item[\rm (ii)] 
The map germ $\phi_4$ defines $W^\#_{1,2q-1}$ singularity at $0$ if and only if 
\begin{equation}\label{WC2}
\theta_1=0, \quad \theta_2\ne0, \quad\theta_3=\theta_5=\cdots=\theta_{2q-1}=0 \text{ and } \theta_{2q+1}\ne0.
\end{equation}
\item[\rm (iii)] 
The map germ $\phi_4$ defines $W_{18}$ singularity at $0$ if and only if 
$$
\theta_1=\theta_2=0\ \text{ and }\ \theta_3\ne0.
$$
Moreover, $\phi_4$ is $\mathcal A$-equivalent to  
\begin{itemize}
\item $(t^4,t^7\pm t^9)$ if $\pm\tfrac{\theta_5}{\theta_3}>0$,
\item $(t^4,t^7\pm t^{13})$ if $\theta_5=0\text{ and }\pm w_{18}>0$,
\item $(t^4,t^7)$ if $\theta_5=w_{18}=0$, 
\end{itemize}
where $w_{18}=\theta_3\theta_9-\tfrac{4641}{1000}\,\theta_6^2+812\,\theta_3^4$. 
\end{itemize}
\end{thm}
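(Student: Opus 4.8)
The plan is to treat the three families uniformly, following the same template as the multiplicity two and three cases (Theorems~\ref{Thm:M2} and \ref{Thm:M3}): reduce the germ $\phi_4(s)=(x(s),y(s))$ to one of the normal forms of Theorem~\ref{Thm:ClassificationASimple} by explicit source and target coordinate changes, and then express the surviving coefficients in terms of the $\theta_i$. From \eqref{Taylor:phi} and \eqref{Formula:Tk} specialised to $m=4$ one reads off
\[
x(s)=\frac{s^4}{4!}+\cdots,\qquad
y(s)=\sum_{k\ge1}(\BS{t}_k\cdot\BS{e}_2)\binom{k+3}{3}\frac{s^{k+4}}{(k+4)!},
\]
where $\BS{t}_k\cdot\BS{e}_2$ is the second component listed in the two displayed expansions of $\BS{t}$. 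Because the leading coefficient of $x$ is the positive constant $1/4!$, its smooth fourth root yields a source reparametrisation $t=x(s)^{1/4}$ after which $x\equiv t^4$; the target maps $(X,Y)\mapsto(X,Y-g(X))$ then delete every exponent of $y$ divisible by $4$, and a rescaling of $t$ and $Y$ normalises the leading coefficient of $y$ to $1$.

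The type is governed by the smallest $k$ with $\BS{t}_k\cdot\BS{e}_2\ne0$, since the corresponding exponent $k+4$ is the leading order of the normalised $y$. From the displayed expansions, $\BS{t}_1\cdot\BS{e}_2=\theta_1$, and when $\theta_1=0$ one has $\BS{t}_2\cdot\BS{e}_2=\theta_2$ and $\BS{t}_3\cdot\BS{e}_2=\theta_3$. Hence $\theta_1\ne0$ forces leading order $5$ and the $(t^4,t^5+\cdots)$ form ($W_{12}$); $\theta_1=0,\ \theta_2\ne0$ forces leading order $6$ and the $(t^4,t^6+\cdots)$ form ($W^\#$); and $\theta_1=\theta_2=0,\ \theta_3\ne0$ forces leading order $7$ and the $(t^4,t^7+\cdots)$ form ($W_{18}$). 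These three cases are mutually exclusive and exhaust the $\mathcal A$-simple multiplicity-four germs (if $\theta_1=\theta_2=\theta_3=0$ the leading order of $y$ is at least $8$ and the germ is no longer $\mathcal A$-simple), so, since the normal forms of distinct types are pairwise $\mathcal A$-inequivalent, the displayed leading conditions are both necessary and sufficient; Lemma~\ref{BG} provides an independent check through the $\mathcal K$-type of the defining equation.

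It remains to identify the sub-normal form within each type. Continuing the reduction, I would eliminate the inessential exponents of $y$ by further source reparametrisations $t\mapsto t(1+\cdots)$ --- each followed by re-extraction of the fourth root to restore $x\equiv t^4$ --- together with target diffeomorphisms; the surviving exponents are precisely those prescribed by the value semigroup of the leading monomial pair, as in Bruce--Gaffney. For $W_{12}$ the only surviving exponent is $t^7$, and I would show its coefficient is a strictly positive multiple of $w_{12}=\theta_1\theta_3-\tfrac{77}{48}\theta_2^2+\tfrac52\theta_1^4$; since a real rescaling multiplies this coefficient by a positive square, its sign is an $\mathcal A$-invariant, giving $(t^4,t^5\pm t^7)$ according to the sign of $w_{12}$ and $(t^4,t^5)$ when $w_{12}=0$. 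For $W_{18}$ the surviving exponents are $t^9$ and then $t^{13}$: the $t^9$-coefficient is a positive multiple of $\theta_5/\theta_3$, and when $\theta_5=0$ the $t^{13}$-coefficient is a positive multiple of $w_{18}=\theta_3\theta_9-\tfrac{4641}{1000}\theta_6^2+812\,\theta_3^4$. For $W^\#_{1,2q-1}$, whose leading pair $(t^4,t^6)$ has the even semigroup $\langle4,6\rangle$, all even-exponent terms of $y$ are removable while the first essential odd exponent is $t^{2q+5}$; by \eqref{Formula:Tk} the odd-order part of $y$ is controlled by the odd coefficients $\theta_{2j+1}$ (with $\theta_{2j+1}$ feeding the exponent $2j+5$), so the conditions $\theta_3=\theta_5=\cdots=\theta_{2q-1}=0$, $\theta_{2q+1}\ne0$ are exactly those placing the first nonzero odd term at $t^{2q+5}$.

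The main obstacle is the explicit coefficient computation underlying the third step: one must track how the chain of fourth-root-renormalised reparametrisations and target diffeomorphisms transforms the coefficient of the critical exponent, and verify that it collapses to exactly $w_{12}$ and $w_{18}$ with their stated rational constants --- the factors $\tfrac{77}{48}$, $\tfrac{4641}{1000}$ and $812$ are the residue of precisely this bookkeeping. I expect the cleanest organisation to be order-by-order, computing the $\mathcal A_e$-tangent directions (equivalently, invoking Mather's lemma) to certify which exponents are removable and then solving for the residual coefficient, rather than guessing the diffeomorphisms. A secondary point needing care is termination: once the critical term has been normalised or shown to vanish, all higher-order terms must genuinely be removable, so that the reduction yields true $\mathcal A$-equivalence to the polynomial normal form (finite determinacy) and not merely formal agreement; this too follows from the tangent-space analysis but must be recorded.
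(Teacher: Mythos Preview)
Your overall architecture is sound and close in spirit to the paper's: reduce $\phi_4$ to the Bruce--Gaffney normal forms by coordinate changes, read the type from the lowest nonzero $\theta_i$, and extract the sub-normal-form from the first surviving higher coefficient. The paper, however, organises the reduction differently. Rather than first forcing $x\equiv t^4$ via a fourth root and then repeatedly re-extracting it after each source change, the paper works with the raw expansion \eqref{Taylor:phi} and applies a single bookkeeping lemma (Lemma~\ref{Lemma:Key}) recording, for a general target change $(p_{i,j},q_{i,j})$ and source change $h=\sum h_k t^k/k!$, exactly which monomial coefficient each parameter first hits. The reduction then becomes a table: for each exponent $e$ in increasing order one specifies which of $p_{i,j}$, $q_{i,j}$, $h_k$ is spent to annihilate the coefficient of $\BS{e}_1s^e$ or $\BS{e}_2s^e$, and the exponents that cannot be reached this way ($\BS{e}_2s^7$ for $W_{12}$; $\BS{e}_2s^9$ and, when $\theta_5=0$, $\BS{e}_2s^{13}$ for $W_{18}$) carry the invariants. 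No tangent spaces or Mather's lemma are invoked; finite determinacy is used only to stop the table at a finite stage.

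The genuine gap in your proposal is the step you yourself flag: you do not carry out the computation producing $w_{12}$ and $w_{18}$, and the assertion that the surviving coefficient ``is a strictly positive multiple of $w_{12}$'' is precisely the content to be proved. The paper does this explicitly: for $W_{12}$ (Remark~\ref{remW12}) a concrete pair $(\Phi,h)$ is written down with $\Phi\comp\phi_4\comp h(t)=(t^4,\,t^5+\tfrac{5h_1^2}{42}w_{12}\,t^7+o(t^7))$, and the analogous computation for $W_{18}$ yields $11h_1^6\,w_{18}$ as the coefficient of $\BS{e}_2s^{13}$ once $\theta_5=0$. Your proposed alternative via $\mathcal A_e$-tangent directions would ultimately solve the same linear system in $(p_{i,j},q_{i,j},h_k)$, so the constants $\tfrac{77}{48}$, $\tfrac{4641}{1000}$, $812$ would still only appear after that calculation is done. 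A smaller point: in part~(ii) your claim that ``the odd-order part of $y$ is controlled by the odd $\theta_{2j+1}$'' is not automatic after reparametrisation, since source changes mix orders; the paper secures this via Lemma~\ref{Lemma:Tk1}(iv), which shows that under the stated vanishing hypotheses the relevant $\BS{t}_k$ reduce exactly to $\theta_k\BS{e}_2$.
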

The following table summarises our criteria for $m=4$. 
\begin{center}
\begin{tabular}{c|l|l}
\hline 
$m=4$&normal form &condition\\
\hline
$W_{12}$
&$(t^4,t^5\pm t^7)$
&$\theta_1\ne0$, $\pm w_{12}>0$\\
&$(t^4,t^5)$
&$\theta_1\ne0$, $w_{12}=0$\\
\hline
$W^\#_{1,1}$
&$(t^4,t^6+t^7)$
&$\theta_1=0$, $\theta_2\ne0$, $\theta_3\ne0$\\
\hline
$W^\#_{1,3}$
&$(t^4,t^6+t^9)$
&$\theta_1=0$, $\theta_2\ne0$, $\theta_3=0$, $\theta_5\ne0$\\
\hline
$W^\#_{1,5}$
&$(t^4,t^6+t^{11})$
&$\theta_1=0$, $\theta_2\ne0$, $\theta_3=\theta_5=0$, $\theta_7\ne0$\\
\hline
$W_{18}$
&$(t^4,t^7\pm t^9)$
&$\theta_1=\theta_2=0$, $\theta_3\ne0$, $\pm\theta_5/\theta_3>0$\\
&$(t^4,t^7\pm t^{13})$
&$\theta_1=\theta_2=0$, $\theta_3\ne0$, $\theta_5=0$, $\pm w_{18}>0$\\
&$(t^4,t^7)$
&$\theta_1=\theta_2=0$, $\theta_3\ne0$, $\theta_5=0$, $w_{18}=0$\\
\hline
\end{tabular}
\end{center}
\begin{rem}
Matsushita (\cite[Theorem 4.14]{M}) gives a similar criterion for $W_{12}$ singularity. 
\end{rem}

\section{Proofs of criteria}\label{4}
Since $\mathcal A$-simple singularities are finitely determined, the proofs of Theorems \ref{Thm:M2}, \ref{Thm:M3} and \ref{Thm:M4}  are obtained by aligning the coefficients of the Taylor expansion of the given map with those of the normal form up to sufficiently high orders, via  suitable coordinate changes of the source and the target. 
For the definition and basic properties of finite determinacy, the reader may consult \cite{Wall}.

The process of aligning coefficients via coordinate changes is based on the following lemma. 

\begin{lem}\label{Lemma:Key}
Assume that $n\gg 1$ and $\phi:\BB{R},0\longrightarrow\BB{R}^2,0$ is expressed as  
\begin{equation}\label{Def:Lemma41}
\phi(t)=(x(t),y(t))=\Bigl(\sum_{i=m}^na_it^{i}+o(t^n),\ b_{m+p}t^{m+p}+\sum_{j=j_1}^nb_jt^j+o(t^n)\Bigr)
\end{equation}
with $\ a_{m}\ne0$, $b_{m+p}\ne0$, $p\not\equiv0\bmod m$, $b_{j_1}\ne0$.
We assume that $m+p<j_1$.
For a diffeomorphism
$\Phi:\BB{R}^2,0\to\BB{R}^2,0$ so that 
$$
\Phi(x,y)=\Bigl(
\sum_{1\le i+j\le n}p_{i,j}x^iy^j+o(|(x,y)|^n),\ 
\sum_{1\le i+j\le n}q_{i,j}x^iy^j+o(|(x,y)|^n)
\Bigr), 
$$ 
and a difffeomorphism $h:\BB{R},0\to\BB{R},0$ so that  
$$
h(t)=\sum_{k=1}^nh_k\frac{t^k}{k!}+o(t^n),
$$
we set $\tilde\phi(t)=\Phi\comp\phi\comp h(t)$ and express it as 
$$
\tilde\phi(t)=(\tilde{x}(t),\tilde{y}(t))
=\Bigl(\sum_{i=m}^n\tilde{a}_it^i+o(t^n),\sum_{j=m}^n\tilde{b}_jt^j+o(t^n)\Bigr).
$$
Then $\tilde{a}_i$ and $\tilde{b}_j$ are expressed as polynomials of $a_i$, $b_j$, $p_{i,j}$, $q_{i,j}$ and $h_k$. 
In particular, the polynomials $\tilde{a}_i$ and $\tilde{b}_j$ are linear in $p_{i,j}$, $q_{i,j}$. 
Moreover, we have the following: 
\begin{itemize}
\item[{\rm (i)}] 
If $e=im+j(m+p)$ for some $i,j\in\BB{Z}_\ge$,  
then the polynomial $\tilde{a}_e$ $(\textrm{resp.}~\tilde{b}_e)$ 
contains the term $p_{i,j}a_{m}^ib_{m+p}^j$ $(\textrm{resp.}~q_{i,j}a_{m}^ib_{m+p}^j)$. 
\item[{\rm (ii)}] 
If $e=im+j_1$ for some $i\in\BB{Z}_\ge$, then the polynomial $\tilde{a}_e$ $(\textrm{resp.}~\tilde{b}_e)$ 
contains the term $p_{i,1}a_{m}^ib_{j_1}$ $(\textrm{resp.}~q_{i,1}a_{m}^ib_{j_1})$. 
\item[\rm (iii)] 
For $c\ge1$, the polynomial $\tilde{a}_{m+c}$ $(\textrm{resp.}~\tilde{b}_{m+p+c})$ contains the term 
$$
a_{m}h_1^{m-1}h_{c+1}\quad(\textrm{resp.}~b_{m+p}h_1^{m+p-1}h_{c+1}),
$$ 
and the polynomials $\tilde{a}_{m+i}$ $(\textrm{resp.}~\tilde{b}_{m+p+i})$, $0\le i<c$, does not contains  
$h_{c+1}$.
\end{itemize}
\end{lem}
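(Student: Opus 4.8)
The plan is to unwind $\tilde\phi=\Phi\comp\phi\comp h$ in two stages. First I would carry out the inner substitution, writing $X(t)=x(h(t))$ and $Y(t)=y(h(t))$. Since $h(t)=h_1t+o(t)$ with $h_1\ne0$, the multinomial expansion of $x(h(t))=\sum_ia_ih(t)^i$ and $y(h(t))=\sum_jb_jh(t)^j$ shows that $X$ has leading term $a_mh_1^mt^m$, that $Y$ has leading term $b_{m+p}h_1^{m+p}t^{m+p}$, and that the coefficient of each power $t^d$ in $X$ (resp. $Y$) is a polynomial in the $a_i,h_k$ (resp. $b_j,h_k$) with indices at most $d$. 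Substituting into $\Phi$ gives $\tilde x=\sum_{i,j}p_{i,j}X^iY^j$ and $\tilde y=\sum_{i,j}q_{i,j}X^iY^j$, from which the asserted polynomial dependence of $\tilde a_i,\tilde b_j$ on $a,b,p,q,h$ and the separate linearity in the $p_{i,j}$ and in the $q_{i,j}$ are immediate. Truncation at order $n$ is harmless: as $X,Y$ have order $\ge m\ge1$, the monomials $X^iY^j$ with $i+j>n$ and the $o$-remainders of $\Phi$ and $h$ affect only $t$-degrees exceeding $n$.

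For (i) and (ii) I would read off the relevant leading monomials using this linearity. The coefficient of the variable $p_{i,j}$ in $\tilde a_e$ equals the coefficient of $t^e$ in $X^iY^j$, and similarly $q_{i,j}$ in $\tilde b_e$. In case (i), $e=im+j(m+p)$ is exactly the order of $X^iY^j$, whose leading coefficient is $(a_mh_1^m)^i(b_{m+p}h_1^{m+p})^j=a_m^ib_{m+p}^jh_1^e$; hence $\tilde a_e$ contains the monomial $p_{i,j}a_m^ib_{m+p}^j$, up to the invertible factor $h_1^e$, and likewise for $\tilde b_e$. In case (ii) I take $(i,1)$: the monomial $a_m^ib_{j_1}$ comes from the leading term $a_m^ih_1^{im}t^{im}$ of $X^i$ times the leading term $b_{j_1}h_1^{j_1}t^{j_1}$ of the summand $b_{j_1}h(t)^{j_1}$ of $Y$. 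Here the hypothesis $m+p<j_1$ is used: it guarantees that $b_{j_1}$ really is a coefficient of $y$ separate from $b_{m+p}$, and that this monomial cannot be assembled in any other way, so it survives without cancellation.

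The core of the lemma is (iii), which I would treat by a valuation argument in the variable $h_{c+1}$ (the coefficient of $t^{c+1}$ in $h$). A monomial of $X=\sum_{i\ge m}a_ih(t)^i$ involving $h_{c+1}$ must take the $t^{c+1}$-factor in at least one factor of some $h(t)^i$, and therefore has $t$-degree at least $(i-1)+(c+1)=i+c\ge m+c$; equality forces $i=m$ and the remaining $m-1$ factors equal to $h_1t$, yielding the monomial $a_mh_1^{m-1}h_{c+1}$ in the coefficient of $t^{m+c}$ and showing that $h_{c+1}$ is absent below degree $m+c$. The same computation puts $b_{m+p}h_1^{m+p-1}h_{c+1}$ in the coefficient of $t^{m+p+c}$ of $Y$. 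Passing to $\tilde x=\sum p_{i,j}X^iY^j$, the least degree at which $h_{c+1}$ can appear in $X^iY^j$ is $\bigl(im+j(m+p)\bigr)+c$, and $im+j(m+p)$ is minimized over nonzero $(i,j)$ uniquely at $(1,0)$ with value $m$; hence $h_{c+1}$ is absent from $\tilde a_{m+i}$ for $0\le i<c$ and enters $\tilde a_{m+c}$ only through $p_{1,0}X$, carrying the claimed monomial.

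The delicate point, which I expect to be the main obstacle, is the corresponding negative statement for the $y$-component. In $\tilde y=\sum q_{i,j}X^iY^j$ the term $q_{1,0}X$ injects the low-degree occurrences of $h_{c+1}$ coming from $X$ (already at degree $m+c<m+p+c$), so the clean statement that $\tilde b_{m+p+i}$ is free of $h_{c+1}$ for $i<c$ holds only once one restricts to the target diffeomorphisms actually used in the normalization --- those preserving the orders of the two components, equivalently with $q_{i,0}=0$ whenever $im<m+p$, so that $\tilde y$ again has order $m+p$. For such $\Phi$ the first occurrence of $h_{c+1}$ in $\tilde y$ is at degree $m+p+c$, through $q_{0,1}Y$, and the valuation argument above applies symmetrically. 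Throughout I would state once that each displayed ``term'' is meant up to a nonzero numerical constant and up to the invertible linear coefficients $h_1,p_{1,0},q_{0,1}$ of the diffeomorphisms; these units are precisely what let one later solve for the $p_{i,j},q_{i,j}$ and $h_{c+1}$ to kill coefficients order by order.
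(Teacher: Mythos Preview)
The paper's proof is the single word ``Trivial,'' so your argument is far more detailed than what the authors supply; the two-stage unwinding (first $h$, then $\Phi$) and the valuation argument in $h_{c+1}$ are the natural way to make the lemma precise, and your treatment of the polynomial dependence, the linearity in $p_{i,j},q_{i,j}$, and items (i)--(ii) is correct.

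Your final paragraph is the valuable part. You have spotted a genuine inaccuracy in the lemma as literally stated: the negative claim in (iii) for the $\tilde b$-component fails in general, because the term $q_{1,0}X$ (and more generally $q_{i,0}X^i$ with $im<m+p$) injects $h_{c+1}$ into $\tilde y$ already at degree $im+c<m+p+c$. Your proposed fix---restricting to target diffeomorphisms with $q_{i,0}=0$ for all $i$ with $im<m+p$, equivalently those preserving the order $m+p$ of the second component---is exactly right, and under that restriction your valuation argument goes through symmetrically with the unique minimizer $(i,j)=(0,1)$. This constraint is tacitly in force in all of the paper's applications: in each proof table the $q_{i,0}$ with $im$ below the order of the second component never appear, and the normalization procedure keeps $\tilde y$ of order $m+p$ throughout. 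Your closing remark that the displayed ``terms'' are to be read up to nonzero multinomial constants and the units $h_1$, $p_{1,0}$, $q_{0,1}$ is also the correct reading; the lemma is used only to guarantee that a given coefficient can be adjusted by choosing the indicated variable, for which a nonzero coefficient suffices.
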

\begin{proof}
Trivial.
\end{proof}

When $p\equiv 0\bmod{m}$ in \eqref{Def:Lemma41}, there is $r$  with $p=mr$, and 
we can eliminate the term $t^{m+p}$ by changing $(x,y)$ by $(x,y-cx^r)$ with some constant $c$. 

Now we prove Theorem \ref{Thm:M2}.
\begin{proof}[Proof of Theorem \ref{Thm:M2}]
It is enough to show that $\phi_2$ is $\mathcal L$-equivalent to $(t^2,t^{2k+1})$, 
up to sufficiently high orders, if 
\eqref{Condition:Ak} holds.
We assume \eqref{Condition:Ak}. Then, by \eqref{Taylor:phi}, we have 
$$
\phi_2(s)=\BS{e}_1\frac{s^2}{2}+\sum_{i=2}^k\BS{t}_{2i-2}\frac{(2i-1)s^{2i}}{(2i)!}
+\theta_{2k-1}\BS{e}_2\frac{2k\,s^{2k+1}}{(2k+1)!}+o(s^{2k+1}).
$$
Setting $S=\{2i+(2k+1)j:i,j\in\BB{Z}_{\ge}\}$, we have
$$
S=\{e\in\BB{Z}_{\ge}:e\equiv 0\bmod2\}\cup\{e\in\BB{Z}_{\ge}:e\ge2k\}.
$$
By Lemma \ref{Lemma:Key}, we have the following:
\begin{itemize}
\item For $e\in S$, we can send the coefficient of $\BS{e}_1s^{e}$ to $0$ ($1$ when $e=2$) 
choosing $p_{i,j}$ suitably for $(i,j)$ with $e=2i+(2k+1)j$ (by Lemma \ref{Lemma:Key}, (i)). 
\item For $e\in S$, we can send the coefficient of $\BS{e}_2s^{e}$ to $0$ ($1$ when $e=2k+1$) 
choosing $q_{i,j}$ suitably for $(i,j)$ with $e=2i+(2k+1)j$ (by Lemma \ref{Lemma:Key}, (i)).
\end{itemize}
Applying these processes repeatedly, we easily complete the proof.
\end{proof}

We next prove Theorem \ref{Thm:M3}.

\begin{proof}[Proof of Theorem \ref{Thm:M3}]
The proof is based on the repeated application of Lemma \ref{Lemma:Key}. 

(i): We first assume \eqref{Condition:E_6k}. 
Then, by \eqref{Taylor:phi}, we have
\begin{equation}\label{Proof:E_6k}
\phi_3(s)=\BS{e}_1\frac{s^3}{6}
+\sum_{i=2}^{k}\BS{t}_{3(i-1)}\frac{\binom{3i-1}2s^{3i}}{(3i)!}+\theta_{3k-2}\BS{e}_2\frac{\binom{3k}2s^{3k+1}}{(3k+1)!}+o(s^{3k+1}).
\end{equation}
We choose $h_{i+1}=0$ for $i$ with $i\not\equiv0\bmod3$, $1\le i<3k-2$, in order not to change the coefficient of $\BS{e}_1s^{i+3}$.

Setting $S=\{3i+(3k+1)j:i,j\in\BB{Z}_{\ge}, i+j\ge1\}$, we have 
$$
S=\{e\in\BB{Z}_{\ge}: e\not\equiv 2\bmod3\}\cup\{e\in\BB{Z}: e\ge6k\}.
$$
By Lemma \ref{Lemma:Key}, we have the following: 
\begin{itemize}
\item
For $e\in S$, we can send the coefficients of $\BS{e}_1s^e$ to $0$ (resp.~$1$), when $e\ne3$ (resp.~$e=3$), choosing $p_{i,j}$  (resp.~$p_{1,0}$) suitably  (by Lemma \ref{Lemma:Key}, (i)).
\item
For $e\in S$, we can send the coefficients of $\BS{e}_2s^e$ to $0$ (resp.~$1$), when $e\ne3k+1$ (resp.~$e=3k+1$), choosing $q_{i,j}$  (resp.~$q_{0,1}$) suitably (by Lemma \ref{Lemma:Key}, (i)).
\item
For $q=0,1,2,\dots,k-1$, we can send the coefficient of $\BS{e}_1s^{3(k+q)+2}$ to $0$, choosing 
$h_{3(k+q)}$ suitably (by Lemma \ref{Lemma:Key}, (iii)).
\item We can send the coefficient of $\BS{e}_2s^{6k-1}$ to $0$, choosing $h_{3k-1}$ suitably (by Lemma \ref{Lemma:Key}, (iii)).
\item 
When \eqref{ia} holds, by Lemma \ref{Lemma:Tk1} (iv), 
the coefficient of $\BS{e}_2s^{3(k+q)+2}$ in \eqref{Proof:E_6k} is 
$$
\begin{cases}
0&(q=0,1,\dots,p-1),\\
\frac{\binom{3(k+p)-1}{2}}{(3(k+p)+2)!}\theta_{3(k+p)-1}&(q=p).
\end{cases}
$$
\item We can send the coefficient of $\BS{e}_2s^{3(k+p)+2}$ to $\pm1$, choosing $h_1$ suitably.
\item For $q$ with $1\le q<k-p$, 
we can send the coefficient of $\BS{e}_2s^{3(k+p+q)+2}$ to $0$, choosing $h_{3(p+q)+2}$ suitably.
\end{itemize}

When \eqref{ib} holds, we repeat these operations to align the coefficients in order from the lowest-order terms, 
and we complete the proof. 

The processes to determine $p_{i,j}$, $q_{i,j}$ and $h_i$, for the cases $k\le4$, are summarized as the following tables:
\begin{center}
\begin{tabular}{|c|c|c|c|c|c|c|c|}
\hline
$E_6$&$s^3$&$s^4$&$s^5$&$s^6$&$s^7$\\
\hline
$\BS{e}_1$&$p_{10}$&$p_{01}$&$h_3$&$p_{20}$&$p_{11}$\\
$\BS{e}_2$&&$q_{01}$&$h_2$&$q_{20}$&$q_{11}$\\
\hline
\end{tabular}
\end{center}
\begin{center}
\begin{tabular}{|c|c|c|c|c|c|c|c|c|c|c|c|c|c|c|}
\hline
$E_{12}$&$s^3$&$s^4$&$s^5$&$s^6$&$s^7$&$s^8$&$s^9$&$s^{10}$&$s^{11}$&$s^{12}$&$s^{13}$\\
\hline
$\BS{e}_1$&$p_{10}$&$h_2$&$h_3$&$p_{20}$&$p_{01}$&$h_6$&$p_{30}$&$p_{11}$&$h_9$&$p_{40}$&$p_{31}$\\
$\BS{e}_2$&&&&$q_{20}$&$q_{01}$&&$q_{30}$&$q_{11}$&$h_5$&$q_{40}$&$q_{31}$\\
\hline
\end{tabular}
\end{center}
\begin{center}
\scriptsize
\begin{tabular}{|c|c|c|c|c|c|c|c|c|c|c|c|c|c|c|c|c|c|}
\hline
$E_{18}$&$s^3$&$s^4$&$s^5$&$s^6$&$s^7$&$s^8$&$s^9$&$s^{10}$&$s^{11}$&$s^{12}$&$s^{13}$&$s^{14}$&$s^{15}$&$s^{16}$&$s^{17}$&$s^{18}$&$s^{19}$\\
\hline
$\BS{e}_1$&$p_{10}$&$h_2$&$h_3$&$p_{20}$&$h_5$&$h_6$&$p_{30}$&$p_{01}$&$h_9$&$p_{40}$&$p_{11}$&$h_{12}$&$p_{50}$&$p_{21}$&$h_{15}$&$p_{60}$&$p_{31}$\\
$\BS{e}_2$&&&&$q_{20}$&&&$q_{30}$&$q_{01}$&&$q_{40}$&$q_{11}$&&$q_{50}$&$q_{21}$&$h_8$&$q_{60}$&$q_{31}$\\
\hline
\end{tabular}
\end{center}
\begin{center}
\includegraphics[width=\textwidth,bb=0 0 550 37]{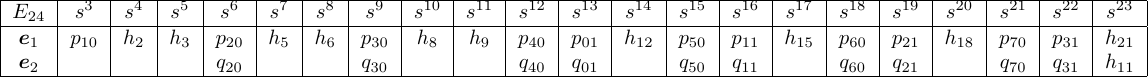}
\end{center}
(ii): Next we assume \eqref{Condition:E_6k+2}. 
Then, by \eqref{Taylor:phi}, we have 
\begin{equation}\label{Proof:E_6k+2}
\phi_3(s)=\BS{e}_1\frac{s^3}{6}
+\sum_{i=2}^k\BS{t}_{3(i-1)}\frac{\binom{3i-1}2s^{3i}}{(3i)!}
+\theta_{3k-1}\BS{e}_2\frac{\binom{3k+1}2s^{3k+2}}{(3k+2)!}+o(s^{3k+2}).
\end{equation}
We choose $h_{i+1}=0$ for $i$ with $i\not\equiv0\bmod3$, 
$1\le i<3k-1$, in order not to change the coefficients of $s^{i+3}\BS{e}_1$.

Setting $S=\{3i+(3k+2)j:i,j\in\BB{Z}_{\ge}, i+j\ge1\}$, we have 
$$
S=\{e\in\BB{Z}_{\ge}: e\not\equiv 1\bmod3\}\cup\{e\in\BB{Z}: e\ge6k+2\}.
$$
By Lemma \ref{Lemma:Key}, we have the following:
\begin{itemize}
\item
For $e\in S$, we can send the coefficients of $\BS{e}_1s^e$ to $0$ (resp.~$1$), when $e\ne3$ (resp.~$e=3$), choosing $p_{i,j}$ (resp.~$p_{1,0}$) suitably (by Lemma \ref{Lemma:Key}, (i)).
\item
For $e\in S$, we can send the coefficients of $\BS{e}_2s^e$ to $0$ (resp.~$1$), when $e\ne3k+2$ (resp.~$e=3k+2$), choosing $q_{i,j}$ (resp.~$q_{0,1}$) suitably (by Lemma \ref{Lemma:Key}, (i)).
\item
For $q=1,2,\dots,k$, we can send the coefficient of $\BS{e}_1s^{3(k+q)+1}$ to zero, choosing $h_{3(k+q)-1}$ suitably (by Lemma \ref{Lemma:Key}, (iii)).
\item We can send the coefficient of $\BS{e}_2s^{6k+1}$ to zero, choosing $h_{3k+1}$ suitably (by Lemma \ref{Lemma:Key}, (iii)).
\item 
When \eqref{iia} holds, by Lemma \ref{Lemma:Tk1} (iv), the coefficient of $\BS{e}_2s^{3(k+q)+1}$ in \eqref{Proof:E_6k+2} is 
$$
\begin{cases}
0&(q=1,2,\dots,p-1),\\
\frac{\binom{3(k+p)+1}{2}}{(3(k+p)+4)!}\theta_{3(k+p)+1}&(q=p).
\end{cases}
$$
\item 
We can send the coefficient of $\BS{e}_2s^{3(k+p)+2}$ is $\pm1$, choosing $h_1$ suitably.
\item 
For $q$ with $1\le q<k-p$, 
we can send the coefficient of $\BS{e}_2s^{3(k+p+q)+1}$ to zero, choosing $h_{3{p+q}}$  suitably.  
\end{itemize}

When \eqref{iib} holds, we repeat these operations to align the coefficients in order from the lowest-order terms, 
and we complete the proof.

The processes to determine $p_{i,j}$, $q_{i,j}$ and $h_i$, for the cases $k\le4$,  are summarized as the following tables:
\begin{center}
\noindent
\begin{tabular}{|c|c|c|c|c|c|c|c|c|c|c|c|c|c|c|}
\hline
$E_8$&$s^3$&$s^4$&$s^5$&$s^6$&$s^7$&$s^8$&$s^9$&$s^{10}$&$s^{11}$&$s^{12}$\\
\hline
$\BS{e}_1$&$p_{10}$&$h_2$&$q_{01}$&$p_{20}$&$h_5$&$p_{11}$&$p_{11}$&$p_{02}$&$p_{21}$&$p_{40}$\\
$\BS{e}_2$&&&$q_{01}$&$q_{20}$&$h_3$&$q_{11}$&$q_{11}$&$q_{02}$&$q_{21}$&$q_{40}$\\
\hline
\end{tabular}
\end{center}
\begin{center}
\begin{tabular}{|c|c|c|c|c|c|c|c|c|c|c|c|c|c|c|}
\hline
$E_{14}$&$s^3$&$s^4$&$s^5$&$s^6$&$s^7$&$s^8$&$s^9$&$s^{10}$&$s^{11}$&$s^{12}$&$s^{13}$&$s^{14}$&$s^{15}$&$s^{16}$\\
\hline
$\BS{e}_1$&$p_{10}$&$h_2$&$h_3$&$p_{20}$&$h_5$&$p_{01}$&$p_{30}$&$h_8$&$p_{11}$&$p_{40}$&$h_{11}$&$p_{21}$&$p_{50}$&$p_{02}$\\
$\BS{e}_2$&&&&$q_{20}$&&$q_{01}$&$q_{30}$&&$q_{11}$&$q_{40}$&$h_6$&$q_{21}$&$q_{50}$&$q_{02}$\\
\hline
\end{tabular}
\end{center}
\begin{center}
\includegraphics[width=\textwidth,bb=0 0 473 37]{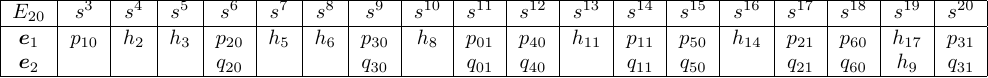}
\end{center}
\begin{center}
\includegraphics[width=\textwidth,bb=0 0 602 37]{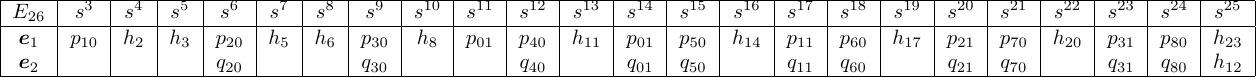}
\end{center}
\end{proof}

\begin{proof}[Proof of Theorem \ref{Thm:M4}]
The proof is based on the repeated application of Lemma \ref{Lemma:Key}. 

(i) We first assume that $\theta_1\ne0$.
Then, by \eqref{Taylor:phi} and \eqref{Formula:Tk}, we have 
\begin{equation*}
\phi_4(s)
=\BS{e}_1\frac{s^4}{4!}
+\theta_1\BS{e}_2\frac{\binom{4}3s^5}{5!}
+(-\theta_1^2\BS{e}_1+\theta_2\BS{e}_2)\frac{\binom{5}3s^6}{6!}
+(-3\theta_2\theta_2\BS{e}_1+(\theta_3-\theta_1^3)\BS{e}_2)\frac{\binom{6}3s^7}{7!}
+o(s^7).
\end{equation*}
Setting $S=\{4i+5j: i,j\in\BB{Z}_{\ge},i+j\ge1\}$, we obtain
$$
S=\{4, 5, 8, 9, 10\}\cup\{e\in\BB{Z}: e\ge12\}. 
$$
By Lemma \ref{Lemma:Key}, we have the following:
\begin{itemize}
\item
For $e\in S$, we can choose $p_{i,j}$  to send the coefficients of $\BS{e}_1s^e$ to  $0$ (resp.~$1$) if $e\ne4$ (resp.~$e=4$) (by Lemma \ref{Lemma:Key}, (i)).
\item
For $e\in S$, we can choose $q_{i,j}$  to send the coefficients of $\BS{e}_2s^e$ to  $0$ (resp.~$1$) if $e\ne5$ (resp.~$e=5$) (by  Lemma \ref{Lemma:Key}, (i)).
\item
We can choose $h_2$ and $h_3$, to send the coefficients of $s^6$ to zero (by  Lemma \ref{Lemma:Key}, (iii)).
\item
We can choose $h_4$, to send the coefficients of $\BS{e}_1s^7$ to zero 
(by Lemma \ref{Lemma:Key}, (iii)).
\item
We can choose $h_7$ and $h_8$, to send the coefficients of $s^{11}$ to zero
(by Lemma \ref{Lemma:Key}, (iii)).
\end{itemize}

We remark that the coefficient of  $\BS{e}_2s^7$ becomes $5h_1^2w_{12}$, when we align the coefficients.
We can choose $h_1$ to send the coefficient of  $\BS{e}_2s^7$ to $\pm1$ when $w_{12}$ is not zero.

Applying these processes repeatedly, we complete the proof. 
The processes to determine $p_{i,j}$, $q_{i,j}$ and $h_k$ are summarized as the following table:
\begin{center}
\begin{tabular}{|c|l|c|c|c|c|c|c|c|c|c|c|c|c|c|c|}
\hline
$W_{12}$&$s^4$&$s^5$&$s^6$&$s^7$&$s^8$&$s^9$&$s^{10}$&$s^{11}$&$s^{12}$\\
\hline
$\BS{e}_1$&$p_{10}$&$p_{01}$&$h_3$&$h_4$&$p_{20}$&$p_{11}$&$p_{02}$&$h_8$&$p_{30}$\\
$\BS{e}_2$&&$q_{01}$&$h_2$&&$q_{20}$&$q_{11}$&$q_{02}$&$h_7$&$q_{30}$\\
\hline
\end{tabular}
\end{center}
See Remark \ref{remW12}, also.

(ii) We assume that $\theta_1=0$ and $\theta_2\ne0$.
Then, by \eqref{Taylor:phi}, we have 
\begin{equation*}
\phi_4(s)=\BS{e}_1\frac{s^4}{4!}+\theta_2\BS{e}_2\frac{\binom{5}3s^6}{6!}+o(s^6).
\end{equation*}
We assume \eqref{WC2}. Then, by \eqref{Taylor:phi} and Lemma \ref{Lemma:Tk1}, , we have 
\begin{equation*}
\phi_4(s)=\BS{e}_1\frac{s^4}{4!}+\theta_2\BS{e}_2\frac{\binom{5}3s^6}{6!}
+\sum_{i=2}^q\BS{t}_{2i}\frac{\binom{2i+3}3s^{2i+4}}{(2i+4)!}
+\theta_{2q+1}\BS{e}_2\frac{\binom{2q+4}3s^{2q+5}}{(2q+5)!}+o(s^{2q+5}).
\end{equation*}
Setting $S_0=\{4i+6j: i,j\in\BB{Z}_{\ge},i+j\ge1, \ j\ne1\}$, we have 
$$
S_0=\{4,8,12\}\cup\{i\in\BB{Z}: i\equiv0\bmod2,\ i\ge16\}.
$$
By Lemma \ref{Lemma:Tk1} (iv), we have that the coefficient of $\BS{e}_2$ in $\BS{t}_{2q+1}$ is $\theta_{2q+1}$, which is non zero, thus 
the coefficient of  $\BS{e}_2s^{2q+5}$ in $\phi_4$ is not zero. 
Setting $S_1=\{4i+(2q+5): i\in\BB{Z}_{\ge}\}$, we have 
$$
S_1=\{e\in\BB{Z}: e\equiv2q+1\bmod4,\ e\ge2q+9\}.
$$
By Lemma \ref{Lemma:Key}, we have the following:
\begin{itemize}
\item 
For $e\in S_0$, we can choose $p_{i,j}$, $j\ne1$, to send the coefficients of $\BS{e}_1s^e$ to  $0$ (resp.~$1$) if $e\ne4$ (resp.~$e=4$) (by Lemma \ref{Lemma:Key}, (i)). 
\item
For $e\in S_0$, we can choose $q_{i,j}$, $j\ne1$, to send the coefficients of $\BS{e}_2s^e$ to $0$ (by Lemma \ref{Lemma:Key}, (i)).
\item 
We can choose $q_{0,1}$, to send the coefficients of $\BS{e}_2s^6$ to $1$ (by Lemma \ref{Lemma:Key}, (i)).
\item
We can choose $h_3$ to send the coefficient of $\BS{e}_1s^6$ to $0$ (by Lemma \ref{Lemma:Key}, (iii)).
\item
We can choose $h_5$ and $h_7$ to send the coefficient of $s^{10}$ to $0$ (by Lemma \ref{Lemma:Key}, (iii)).
\item
We can choose $h_9$ and $h_{11}$ to send the coefficient of $s^{14}$ to $0$ (by Lemma \ref{Lemma:Key}, (iii)).
\item 
For $e\in S_1$, we can choose $p_{i,1}$  to send the coefficients of $\BS{e}_1s^e$ to  $0$ (by Lemma \ref{Lemma:Key}, (ii)).
\item 
For $e\in S_1$, we can choose $q_{i,1}$, $i\ne0$,  to send the coefficients of $\BS{e}_2s^e$ to $0$ if $e\ne2q+5$ (by Lemma \ref{Lemma:Key}, (ii)).
\item
For $e$ with $e\equiv 2q+3\bmod4$, $e>4$, the coefficient of $\BS{e}_1s^e$ become zero choosing $h_j$ suitably where $j=(2q+7+4i)-3=2q+4i+4$ (by Lemma \ref{Lemma:Key}, (iii)).
\item
For $e$ with $e\equiv 2q+3\bmod4$, $e>6$, 
the coefficient of $\BS{e}_2s^e$ becomes zero choosing $h_j$ suitably where 
$j=(2q+7+4i)-5=2q+4i+2$ (by Lemma \ref{Lemma:Key}, (iii)).
\item 
We can choose $h_1$ to send the coefficient of $\BS{e}_2s^{2q+5}$ to $1$.
\end{itemize}
 
Based on the above, it is possible to organize an induction to align the coefficients so that they are in normal form. In fact, first, the odd-order coefficients should be aligned to a sufficiently high order, and then the even-order coefficients should be aligned. 
By repeating this operation, the map under consideration can be reduced to its normal form. 

The processes to determine $p_{i,j}$, $q_{i,j}$ and $h_k$, for the cases $q\le3$,  are summarized as the following tables:
\begin{center}
\begin{tabular}{|c|c|c|c|c|c|c|c|c|c|c|c|c|c|c|c|}
\hline
$W^\#_{1,1}$
&$s^4$&$s^5$&$s^6$&$s^7$&$s^8$&$s^9$&$s^{10}$&$s^{11}$&$s^{12}$&$s^{13}$&$s^{14}$&$s^{15}$&$s^{16}$&$s^{17}$&$s^{18}$\\
\hline
$\BS{e}_1$&$p_{10}$&$h_2$&$p_{01}$&$h_4$&$p_{20}$&$h_6$&$p_{11}$&$h_8$&$p_{30}$&$h_{10}$&$p_{21}$&$h_{12}$&$p_{40}$&$h_{14}$&$p_{03}$\\
$\BS{e}_2$&&&$q_{01}$&$h_1$&$q_{20}$&$h_3$&$q_{11}$&$h_5$&$q_{30}$&$h_7$&$q_{21}$&$h_9$&$q_{40}$&$h_{11}$&$q_{03}$\\
\hline
\end{tabular}
\end{center}
\begin{center}
\begin{tabular}{|c|c|c|c|c|c|c|c|c|c|c|c|c|c|c|c|}
\hline
$W^\#_{1,3}$&$s^4$&$s^5$&$s^6$&$s^7$&$s^8$&$s^9$&$s^{10}$&$s^{11}$&$s^{12}$&$s^{13}$&$s^{14}$&$s^{15}$&$s^{16}$&$s^{17}$&$s^{18}$\\
\hline
$\BS{e}_1$&$p_{10}$&$h_2$&$p_{01}$&$h_4$&$p_{20}$&$h_6$&$p_{11}$&$h_8$&$p_{30}$&$h_{10}$&$p_{21}$&$h_{12}$&$p_{40}$&$h_{14}$&$p_{03}$\\
$\BS{e}_2$&$$&&$q_{01}$&&$q_{20}$&$h_1$&$q_{11}$&$h_3$&$q_{30}$&$h_5$&$q_{21}$&$h_7$&$q_{40}$&$h_9$&$q_{03}$\\
\hline
\end{tabular}
\end{center}
\begin{center}
\begin{tabular}{|c|c|c|c|c|c|c|c|c|c|c|c|c|c|c|c|}
\hline
$W^\#_{1,5}$&$s^4$&$s^5$&$s^6$&$s^7$&$s^8$&$s^9$&$s^{10}$&$s^{11}$&$s^{12}$&$s^{13}$&$s^{14}$&$s^{15}$&$s^{16}$&$s^{17}$&$s^{18}$\\
\hline
$\BS{e}_1$&$p_{10}$&$h_2$&$p_{01}$&$h_4$&$p_{20}$&$h_6$&$p_{11}$&$h_8$&$p_{30}$&$h_{10}$&$p_{21}$&$h_{12}$&$p_{40}$&$h_{14}$&$p_{03}$\\
$\BS{e}_2$&$$&&$q_{01}$&&$q_{20}$&&$q_{11}$&$h_1$&$q_{30}$&$h_3$&$q_{21}$&$h_5$&$q_{40}$&$h_7$&$q_{03}$\\
\hline
\end{tabular}
\end{center}
Alternative processes to determine $p_{i,j}$, $q_{i,j}$ and $h_k$ are as follows:
\begin{center}
\begin{tabular}{|c|c|c|c|c|c|c|c|c|c|c|c|c|c|c|c|}
\hline
$W^\#_{1,1}$
&$s^4$&$s^5$&$s^6$&$s^7$&$s^8$&$s^9$&$s^{10}$&$s^{11}$&$s^{12}$&$s^{13}$&$s^{14}$&$s^{15}$&$s^{16}$&$s^{17}$&$s^{18}$\\
\hline
$\BS{e}_1$&$p_{10}$&$h_2$&$h_3$&$p_{01}$&$p_{20}$&$h_6$&$h_7$&$p_{11}$&$p_{30}$&$h_{10}$&$h_{11}$&$p_{21}$&$p_{40}$&$h_{14}$&$p_{03}$\\
$\BS{e}_2$&&&$q_{01}$&$h_1$&$q_{20}$&$h_4$&$h_5$&$q_{11}$&$q_{30}$&$h_8$&$h_9$&$q_{21}$&$q_{40}$&$h_{12}$&$q_{03}$\\
\hline
\end{tabular}
\end{center}
\begin{center}
\begin{tabular}{|c|c|c|c|c|c|c|c|c|c|c|c|c|c|c|c|}
\hline
$W^\#_{1,3}$&$s^4$&$s^5$&$s^6$&$s^7$&$s^8$&$s^9$&$s^{10}$&$s^{11}$&$s^{12}$&$s^{13}$&$s^{14}$&$s^{15}$&$s^{16}$&$s^{17}$&$s^{18}$\\
\hline
$\BS{e}_1$&$p_{10}$&$h_2$&$h_3$&$h_4$&$p_{20}$&$p_{01}$&$h_7$&$h_8$&$p_{30}$&$p_{11}$&$h_{11}$&$p_{21}$&$p_{40}$&$h_{14}$&$p_{03}$\\
$\BS{e}_2$&$$&&$q_{01}$&&$q_{20}$&$h_1$&$h_5$&$h_6$&$q_{30}$&$q_{11}$&$h_9$&$q_{21}$&$q_{40}$&$h_{12}$&$q_{03}$\\
\hline
\end{tabular}
\end{center}
\begin{center}
\begin{tabular}{|c|c|c|c|c|c|c|c|c|c|c|c|c|c|c|c|}
\hline
$W^\#_{1,5}$&$s^4$&$s^5$&$s^6$&$s^7$&$s^8$&$s^9$&$s^{10}$&$s^{11}$&$s^{12}$&$s^{13}$&$s^{14}$&$s^{15}$&$s^{16}$&$s^{17}$&$s^{18}$\\
\hline
$\BS{e}_1$&$p_{10}$&$h_2$&$h_3$&$h_4$&$p_{20}$&$h_6$&$h_7$&$p_{01}$&$p_{30}$&$h_{10}$&$h_{11}$&$p_{11}$&$p_{40}$&$h_{14}$&$p_{03}$\\
$\BS{e}_2$&$$&&$q_{01}$&&$q_{20}$&&$h_5$&$h_1$&$q_{30}$&$h_8$&$h_9$&$q_{11}$&$q_{40}$&$h_{12}$&$q_{03}$\\
\hline
\end{tabular}
\end{center}
(iii) We assume that $\theta_1=\theta_2=0$ and $\theta_3\ne0$.
Then, by \eqref{Taylor:phi}, we have 
\begin{equation*}
\phi_4(s)=\BS{e}_1\frac{s^4}{4!}+\theta_3\BS{e}_2\frac{\binom{6}3s^7}{7!}+o(s^7).
\end{equation*}
Its coefficient of $\BS{e}_2$ is given by 
$$
\sum_{i=4}^{12}\theta_{i-4}\frac{\binom{i-1}3s^i}{i!}+(\theta_9-280\theta_3^3)\frac{\binom{12}3s^{13}}{13!}+o(s^{13}).
$$
Setting $S=\{4i+7j: i,j\in\BB{Z}_{\ge},i+j\ge1\}$, we have 
$$
S=\{4, 7, 8, 11, 12, 14, 15, 16\}\cup\{i\in\BB{Z}: i\ge18\}. 
$$
We set $h_2=h_3=0$, in order not to change the coefficients of $\BS{e}_1s^5$ and $\BS{e}_1s^6$. 

By Lemma \ref{Lemma:Key}, we have the following:
\begin{itemize}
\item
For $e\in S$, we can choose $p_{i,j}$  to send the coefficients of $\BS{e}_1s^e$ to  $0$ (resp.~$1$) if $e\ne4$ (resp.~$e=4$) (by Lemma \ref{Lemma:Key}, (i)).
\item
For $e\in S$, we can choose $q_{i,j}$  to send the coefficients of $\BS{e}_2s^e$ to  $0$ (resp.~$1$) if $e\ne7$ (resp.~$e=7$) (by Lemma \ref{Lemma:Key}, (i))
\item
We can choose $h_6$, to send the coefficients of $\BS{e}_1s^9$ to zero (by Lemma \ref{Lemma:Key}, (iii)).
\item
The coefficient of  $\BS{e}_2s^9$ is constant multiple of $\theta_5$ (by  Lemma \ref{Lemma:Key}, (iii)).
\item
We can choose $h_{10}$, to send the coefficients of $\BS{e}_1s^{13}$ to zero (by Lemma \ref{Lemma:Key}, (iii)).
\item
We can choose $h_{11}$ and $h_{14}$, to send the coefficients of $s^{17}$ to zero (by  Lemma \ref{Lemma:Key}, (iii)).
\item
When $\theta_5\ne0$, we can choose $h_1$ to send the coefficient of $\BS{e}_2s^9$ to $\pm1$. 
Then, we can send the coefficient of $\BS{e}_1s^{13}$ to $0$, choosing $h_5$ suitably. 
\item
We remark that the coefficient of  $\BS{e}_2s^{13}/13!$ becomes $11h_1^6w_{18}$, when we align the coefficients 
under the condition $\theta_5=0$. Then, we can choose $h_1$ to send the coefficient of $\BS{e}_2s^{13}$ to $\pm1$.
\end{itemize}

Applying these processes repeatedly, we complete the proof. 
The processes to determine $p_{i,j}$, $q_{i,j}$ and $h_k$ are summarized as the following table:
\begin{center}
\begin{tabular}{|c|l|c|c|c|c|c|c|c|c|c|c|c|c|c|c|}
\hline
$W_{18}$&$s^4$&$s^5$&$s^6$&$s^7$&$s^8$&$s^9$&$s^{10}$&$s^{11}$&$s^{12}$&$s^{13}$&$s^{14}$&$s^{15}$&$s^{16}$&$s^{17}$&$s^{18}$\\
\hline
$\BS{e}_1$&$p_{10}$&$h_2$&$h_3$&$p_{01}$&$p_{20}$&$h_6$&$h_7$&$p_{11}$&$p_{30}$&$h_{10}$&$p_{02}$&$p_{21}$&$p_{40}$&$h_{14}$&$p_{12}$\\
$\BS{e}_2$&&&&$q_{01}$&$q_{20}$&&$h_4$&$q_{11}$&$q_{30}$&&$q_{02}$&$q_{21}$&$q_{40}$&$h_{11}$&$q_{12}$\\
\hline
\end{tabular}
\end{center}
\end{proof}
\begin{rem}\label{remW12}
For $W_{12}$ singularity, setting $\Phi(x,y)=(\frac{4!}{h_1^4}x+\frac{10\theta_2}{h_1^4\theta_1^2}y,\frac{30}{h_1^5\theta_1}y)$, 
we observe that 
$$
\Phi\comp\phi_4\comp h(t)=(t^4,t^5+\tfrac{5h_1^2w_{12}}{42}t^7+o(t^7))
$$
where $h(t)=h_1t
- ( \frac{{{h}_{1}^{2}} {{\theta }_2}}{6 {{\theta }_1}}) \frac{t^2}{2!}
- ( \frac{{{h}_{1}^{3}}\, \left( {{\theta }_{2}^{2}}\mathop{-}8 {{\theta }_{1}^{4}}\right) }{16 {{\theta }_{1}^{2}}}) \frac{t^3}{3!}
- ( \frac{{{h}_{1}^{4}} {{\theta }_2} ( 5 {{\theta }_1} {{\theta }_3}\mathop{-}7 {{\theta }_{2}^{2}}\mathop{-}13 {{\theta }_{1}^{4}}) }{21 {{\theta }_{1}^{3}}}) \frac{t^4}{4!}
$. 
This explains why we need $w_{12}$ for $W_{12}$ singularity. 
A similar argument applies to $W_{18}$ singularity, but we omit the details here.
\end{rem}
\section{Parallel curves}\label{5}
Since the nineteenth century, parallel curves have attracted considerable interest 
(see \cite{Cayley}, \cite{Roberts}, for example), 
and it has long been recognized that singularities arise at specific points under parallel translation.
To the best of the authors' knowledge, Bruce and Giblin \cite[§7.12]{BGib} were the first to identify, within a modern singularity-theoretic framework, the appearance of an $A_2$ singularity (a 3/2-cusp) on a parallel curve at a non-vertex point.
Porteous \cite[Proposition 1.16]{Porteous} later described a criterion for detecting an $E_6$ singularity (a 4/3-cusp), observing that such cusps occur on parallel curves associated with first-order vertices, in connection with the evolute.
Our approach extends this analysis to degenerate vertices (Theorem~\ref{Thm:Parallel1}, (iii)), and also to situations in which the original curve itself may possess singularities (Theorems~\ref{Thm:Parallel2} and~\ref{Thm:Parallel3}).
We determine the conditions under which the parallel curve of a plane curve with $\mathcal A$-simple singularities remains $\mathcal A$-simple.

We consider the parallel curves of $\phi_m(t)$, \eqref{Def:Phi_m}, 
defined by $\phi_m^\delta(t)=\phi_m(t)+\delta\BS{n}$ where  $\delta$ is a non-zero constant.
We assume that $t$ is a curvature parameter of $\phi_m$. 
\begin{lem}\label{Lem:MultParallel}
The multiplicity $m^\delta$ of the parallel $\phi_m^\delta(t)$ at $t=0$ is given by the following:
$$
m^\delta=
\begin{cases}
\ord \theta,&\text{if}\ \ord \theta<m;\\
m,&\text{if}\ \ord \theta>m,\ \text{or}\ \ord \theta=m,\ \delta\ne\theta_m^{-1};\\
\min\{i:\theta_i\ne0, i>\ord \theta\},&\text{if}\ \ord \theta=m,\ \delta=\theta_m^{-1}.
\end{cases}
$$
\end{lem}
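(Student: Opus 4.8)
The plan is to reduce the computation of $m^\delta$ to the order of vanishing of a single scalar factor obtained by differentiating the parallel curve. First I would differentiate $\phi_m^\delta(t)=\phi_m(t)+\delta\BS{n}$ with respect to $t$; since $t$ is assumed to be a curvature parameter, the identities \eqref{PT} and \eqref{FS} hold with $s$ replaced by $t$. Using \eqref{PT} for $d\phi_m/dt$ and the relation $d\BS{n}/dt=-\kappa\BS{t}$ read off from \eqref{FS}, both contributions are proportional to $\BS{t}$, giving
\begin{equation*}
\frac{d\phi_m^\delta}{dt}=\left(\frac{t^{m-1}}{(m-1)!}-\delta\kappa(t)\right)\BS{t}.
\end{equation*}
Because $|\BS{t}|=1$, the order of vanishing of the vector $d\phi_m^\delta/dt$ at $t=0$ coincides with that of the scalar $g(t):=\frac{t^{m-1}}{(m-1)!}-\delta\kappa(t)$. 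After translating $\phi_m^\delta$ to a germ based at its image point (note $\phi_m^\delta(0)=\delta\BS{e}_2$), its multiplicity equals one more than the order of its derivative, so $m^\delta=1+\ord g$, with the order taken at $t=0$.

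Next I would express $\kappa$ in terms of $\theta$. Since $\kappa=d\theta/dt$ and $\kappa_{i-1}=\theta_i$, we have $\kappa(t)=\sum_{j\ge0}\theta_{j+1}t^j/j!$; writing $\nu=\ord\theta$ yields $\theta_1=\cdots=\theta_{\nu-1}=0$ and $\theta_\nu\ne0$, so $\kappa(t)=\theta_\nu\,t^{\nu-1}/(\nu-1)!+o(t^{\nu-1})$ and $\ord\kappa=\nu-1$. The statement then follows by comparing the two terms of $g$ through a case analysis on $\nu$ versus $m$. If $\nu<m$, then $\ord\kappa=\nu-1<m-1$, so the $-\delta\kappa$ term dominates, $\ord g=\nu-1$, and $m^\delta=\nu=\ord\theta$. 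If $\nu>m$, then $t^{m-1}/(m-1)!$ dominates, $\ord g=m-1$, and $m^\delta=m$.

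The only delicate branch is $\nu=m$, where both terms have order $m-1$ and
\begin{equation*}
g(t)=\frac{(1-\delta\theta_m)\,t^{m-1}}{(m-1)!}-\delta\sum_{i\ge m+1}\theta_i\frac{t^{i-1}}{(i-1)!}.
\end{equation*}
When $\delta\ne\theta_m^{-1}$ the leading coefficient $1-\delta\theta_m$ is nonzero, so $\ord g=m-1$ and $m^\delta=m$; when $\delta=\theta_m^{-1}$ this coefficient cancels and $\ord g$ is governed by the next nonvanishing coefficient, giving $\ord g=\min\{i:\theta_i\ne0,\ i>\ord\theta\}-1$ and hence the stated value of $m^\delta$. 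The main point to handle carefully is precisely this cancellation in the boundary case $\ord\theta=m$: it is exactly the choice $\delta=\theta_m^{-1}$ --- the "radius of curvature" singled out in the preceding remark --- that forces the leading term of $g$ to drop out and the multiplicity to jump.
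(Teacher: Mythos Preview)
Your proof is correct and follows exactly the same route as the paper: compute $(\phi_m^\delta)'(t)=\bigl(\tfrac{t^{m-1}}{(m-1)!}-\delta\kappa\bigr)\BS{t}$ and read off the multiplicity from the order of the scalar factor. The paper's version simply records this computation and leaves the case analysis implicit, whereas you spell it out in full; there is no difference in substance.
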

\begin{proof}
This is a consequence of the following computation:
\begin{align*}
(\phi_m^\delta)'(t)
=&\Bigl(\frac{t^{m-1}}{(m-1)!}-\delta\kappa\Bigr)\BS{t}\\
=&\Bigl(\frac{t^{m-1}}{(m-1)!}-\delta\sum_{i\ge\ord \theta}\theta_i\frac{t^{i-1}}{(i-1)!}\Bigr)\BS{t}\\
=&\Bigl((1-\delta\theta_m)\frac{t^{m-1}}{(m-1)!}-\delta\sum_{i\ge\ord \theta,\ i\ne m}\theta_i\frac{t^{i-1}}{(i-1)!}\Bigr)\BS{t}.
\qedhere\end{align*}
\end{proof}
Lemma \ref{Lem:MultParallel} has the following consequences: 
\begin{rem}\label{rmk:multofP}
By a suitable choice of a nonsingular plane curve, its parallel curve at the radius of curvature can degenerate into a singular curve of arbitrarily high multiplicity.
On the other hand, the multiplicities of parallel curves of $A_{2k}$ (resp.~$E_{6k}$, $E_{6k+2}$) 
singularities are at most $2k-1$ (resp.~$3k-2$, $3k-1$). 
Moreover, the multiplicities of parallel curves of $W_{12}$ (resp.~$W^{\#}_{1,2q-1}$, $W_{18}$) 
singularities are always $1$ (resp.~$2$, $3$).
\end{rem}
\begin{rem}
Cayley~\cite{Cayley} and Roberts~\cite{Roberts} discussed the degrees of parallel curves of certain algebraic curves.
Lemma~\ref{Lem:MultParallel} may be regarded as a local version of their arguments.
Their works also suggest an intention to analyze the role of singularities, although this aspect does not seem to have been fully developed.
Since the present section analyzes singularities of parallel curves, we hope that this work may contribute, even in a small way, to research in that direction.
\end{rem}
Let $t$ denote the curvature parameter of $\phi_m$.
Since $\frac{d}{dt}(\phi^\delta_m)=\frac{d}{dt}\phi_m-\delta\kappa\BS{t}$, a solution  $s$  to 
$$
\pm\frac{s^{m^\delta-1}}{(m^\delta-1)!}\frac{ds}{dt}=\frac{t^{m-1}}{(m-1)!}-\delta\kappa,
$$
is a curvature parameter of the parallel curve $\phi_m^\delta$.  
We choose the sign on the left-hand side so that the orientation induced by $s$ agrees with that induced by 
$t$.
We thus conclude that 
\begin{equation}\label{Form:CurvParaPC}
\pm\frac{s^{m^\delta}}{m^\delta!}
=\frac{t^m}{m!}-\delta\theta=(1-\delta\theta_m)\frac{t^m}{m!}-\delta\sum_{i\ne m}\theta_i\,\frac{t^i}{i!}.
\end{equation}
Substituting $t$ by a power series in the curvature parameter $s$, say $t_0s(1+\sum_{i\ge1}t_i\frac{s^i}{(i+1)!})$,  
into the right hand side of \eqref{Form:CurvParaPC}, we can determine $t_i$, $i=0, 1, 2, \dots$, sequentially.

Note that the curve $\phi_m$ and its parallel curve $\phi_m^\delta$ share the same frame $\{\BS{t},\BS{n}\}$, 
and hence the same angle function $\theta$. Therefore, we have 
$$
\theta=\sum_{i=1}^r\theta_i\frac{t^i}{i!}+o(t^r)=\sum_{i=1}^r\theta^\delta_{i}\frac{s^i}{i!}+o(s^r)
$$
where $t$ is the curvature parameter of $\phi_m$ and $s$ is the curvature parameter of $\phi_m^\delta$. 
Here $\theta^\delta_i$ denotes the corresponding invariants of the parallel curve $\phi_m^\delta$. 
Throughout, the superscript $\delta$ is used to indicate invariants associated with the parallel curve $\phi_m^\delta$.
For the parallel curve of a nonsingular curve, it is classically well known that 
\[
\kappa^\delta=\frac{\kappa}{1-\delta\kappa}. 
\]

For $m=1,2,3,4$, we express several $\theta^\delta_i$'s in terms of $\theta_i$, $i=1,2,\dots$, and $\delta$, 
in the proofs of the subsequent theorems. 
These are generalizations of the classical curvature relation above and 
enable us to determine when the parallel curve has an $\mathcal A$-simple singularity.
\begin{thm}\label{Thm:Parallel1}
The singularities of parallel curves of a nonsingular curve ($m=1$) are described as follows: 
\begin{itemize}
\item[\rm (i)] 
If $\phi_1$ is neither an inflection point nor a vertex at $0$ (i.e., $\theta_1\theta_2\ne0$), then $\phi_1^\delta$, $\delta=\theta_1^{-1}$, is an $A_2$ singularity at $0$. 
\item[\rm (ii)] 
If $\phi_1$ is a non inflection 1st order vertex (i.e., $\theta_1\ne0$, $\theta_2=0$, $\theta_3\ne0$), then 
$\phi_1^\delta$, $\delta=\theta_1^{-1}$, is an $E_6$ singularity at $0$.
\item[\rm (iii)] 
If $\phi_1$ is a non inflection 2nd order vertex (i.e., $\theta_1\ne0$, $\theta_2=\theta_3=0$, $\theta_4\ne0$), then 
$\phi_1^\delta$, $\delta=\theta_1^{-1}$, is a $W_{12}$ singularity at $0$.
Moreover, we have
\begin{itemize}
\item $\phi_1^\delta$ $\sim_{\mathcal A}$ $(t^4,t^5\pm t^7)$ if $\pm\tilde{w}_{12}^\delta>0$, 
\item $\phi_1^\delta$ $\sim_{\mathcal A}$ $(t^4,t^5)$ if $\tilde{w}_{12}^\delta=0$, 
\end{itemize}
where $\tilde{w}_{12}^\delta=\theta_4\theta_6-\frac{35}{48}\,\theta_5^2+50\,\theta_1^2\theta_4^2$.
\end{itemize}
\end{thm}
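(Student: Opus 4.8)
The plan is to reduce each of the three assertions to the corresponding criterion already established (Theorems~\ref{Thm:M2}, \ref{Thm:M3}, and \ref{Thm:M4}) by computing the Taylor coefficients $\theta_i^\delta$ of the angle function, read against the curvature parameter $s$ of the parallel curve. Throughout I take $\delta=\theta_1^{-1}$, so that $1-\delta\theta_1=0$ and the leading term on the right-hand side of \eqref{Form:CurvParaPC} drops out. Since $\ord\theta=m=1$ in all three cases, Lemma~\ref{Lem:MultParallel} gives $m^\delta=\min\{i:\theta_i\ne0,\ i>1\}$, which equals $2$, $3$, and $4$ in cases (i), (ii), and (iii) respectively --- matching the multiplicities of $A_2$, $E_6$, and $W_{12}$.

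With the leading term removed, \eqref{Form:CurvParaPC} reads $\pm s^{m^\delta}/m^\delta!=-\delta\sum_{i>1}\theta_i t^i/i!$, whose lowest surviving term on the right is $-\delta\theta_{m^\delta}t^{m^\delta}/m^\delta!$. Choosing the sign on the left to make the leading coefficient positive, I can solve for $t$ as a power series $t=c^{-1}s(1+O(s))$ in the curvature parameter $s$ of $\phi_1^\delta$, where $c=(\mp\delta\theta_{m^\delta})^{1/m^\delta}>0$. Substituting this series into $\theta=\sum_{i\ge1}\theta_i t^i/i!$ and collecting powers of $s$ expresses each $\theta_i^\delta$ as a polynomial in the $\theta_j$ and $\delta$; in particular the leading coefficient is $\theta_1^\delta=\theta_1/c\ne0$.

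For (i) and (ii) this is all that is needed: by Theorem~\ref{Thm:M2} with $k=1$ the condition $\theta_1^\delta\ne0$ together with $m^\delta=2$ forces an $A_2$ singularity, and by Theorem~\ref{Thm:M3}(i) with $k=1$ the same condition with $m^\delta=3$ forces an $E_6$ singularity. For (iii), $\theta_1^\delta\ne0$ and $m^\delta=4$ give a $W_{12}$ singularity by Theorem~\ref{Thm:M4}(i); to distinguish the $\mathcal A$-types I must also compute $\theta_2^\delta$ and $\theta_3^\delta$ and evaluate the invariant $w_{12}^\delta=\theta_1^\delta\theta_3^\delta-\tfrac{77}{48}(\theta_2^\delta)^2+\tfrac52(\theta_1^\delta)^4$. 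The key claim is that, after substituting $\delta=\theta_1^{-1}$ and $\theta_2=\theta_3=0$, this quantity is a positive multiple of $\tilde w_{12}^\delta=\theta_4\theta_6-\tfrac{35}{48}\theta_5^2+50\,\theta_1^2\theta_4^2$, so that the trichotomy in the sign of $w_{12}^\delta$ supplied by Theorem~\ref{Thm:M4}(i) is exactly the stated trichotomy in the sign of $\tilde w_{12}^\delta$.

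The main obstacle is this last computation: to obtain $w_{12}^\delta$ one must expand $t$ as a series in $s$ to order high enough to capture $\theta_3^\delta$, substitute into $\theta$, and simplify the resulting polynomial in $\theta_4,\theta_5,\theta_6$ (with $\theta_1$ entering through $\delta$). The cancellation that collapses $w_{12}^\delta$ to a positive scalar times $\tilde w_{12}^\delta$ is where the delicate bookkeeping lies; I would organise it by first recording the series coefficients of $t(s)$ through the relevant order, then using the explicit low-order formulas for $\theta_i^\delta$ --- the analogues of the classical relation $\kappa^\delta=\kappa/(1-\delta\kappa)$ --- that are assembled in the proofs of the preceding parallel-curve theorems.
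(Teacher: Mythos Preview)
Your approach is essentially the same as the paper's: compute $m^\delta$ from Lemma~\ref{Lem:MultParallel}, invert \eqref{Form:CurvParaPC} to express $t$ as a power series in the curvature parameter $s$ of $\phi_1^\delta$, substitute into $\theta$ to read off the $\theta_i^\delta$, and then invoke Theorems~\ref{Thm:M2}--\ref{Thm:M4}. For parts (i) and (ii) your observation that $\theta_1^\delta\ne0$ suffices is exactly what the paper uses. For part (iii) the paper does carry out the computation you describe as the ``main obstacle'' and obtains the explicit relation
\[
w_{12}^\delta=\frac{|\theta_1/\theta_4|^3}{20}\,\tilde w_{12}^\delta,
\]
confirming your claim that $w_{12}^\delta$ is a positive multiple of $\tilde w_{12}^\delta$; the intermediate values are $\theta_1^\delta=-\theta_1/(t_0^3\theta_4)$, $\theta_2^\delta=\theta_1\theta_5/(10t_0^2\theta_4^2)$, and $\theta_3^\delta=\tfrac{\theta_1^2}{20t_0\theta_4^3}\bigl(\theta_4\theta_6-\tfrac{21}{20}\theta_5^2\bigr)$ with $t_0=|\theta_1/\theta_4|^{1/4}$. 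One small caution: your formula $\theta_1^\delta=\theta_1/c$ is correct up to a sign depending on the choice of $\pm$ in \eqref{Form:CurvParaPC}, but since only $\theta_1^\delta\ne0$ matters for (i)--(ii) and only the sign of $w_{12}^\delta$ matters for (iii), this does not affect the argument.
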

\begin{proof}
First we remark that, if $\delta\theta_1\ne1$, then $m^\delta=1$, 
$t=t_0s+\frac{\delta\theta_2}{1-\delta\theta_1}{(t_0s)^2}/2+o(s^2)$, $t_0=|1-\delta\theta_1|^{-1}$,
$$
\theta^\delta_1=\frac{\theta_1}{1-\delta\theta_1},\ 
\theta^\delta_2=\frac{t_0\theta_2}{(1-\delta\theta_1)^2},\
\theta^\delta_3=\frac{\theta_3}{(1-\delta\theta_1)^2}+\frac{3t_0^2\delta\theta_2^2}{(1-\delta\theta_1)^3}. 
$$

(i): When $\delta\theta_1=1$ and $\theta_2\ne0$, we have $m^\delta=2$,  and 
$t=t_0s-\frac{\theta_3}{3\theta_2}{(t_0s)^2}/2+o(s^2)$, 
$t_0=|\theta_1/\theta_2|^{\frac12}$, 
$$
\theta^\delta_1=-\frac{\theta_1^2}{t_0\theta_2},\ 
\theta^\delta_2=\theta_1\Bigl(\frac{\theta_1\theta_3}{3\theta_2^2}-1\Bigr), \
\theta^\delta_3=\frac{t_0\theta_1^2}{12\theta_2^3}(3\theta_2\theta_4-5\theta_3^2).
$$
Since $\theta_1\ne0$, we have $\theta_1^\delta\ne0$ and $\phi_1^\delta$ is $A_2$ singularity.

(ii): When $\delta\theta_1=1$, $\theta_2=0$ and $\theta_3\ne0$, we have $m^\delta=3$, and 
$t=t_0s-\frac{\theta_4}{6\theta_3}{(t_0s)^2}/2+o(s^2)$, $t_0=|{\theta_1}/{\theta_3}|^{\frac13}$, 
$$
\theta^\delta_1=-\frac{\theta_1^2}{t_0^2\theta_3},\ 
\theta^\delta_2=\frac{\theta_1^2\theta_4}{6t_0\theta_3^2},\ 
\theta^\delta_3=-\theta_1\Bigl(\frac{\theta_1\theta_5}{10\theta_3^2}-\frac{\theta_1\theta_4^2}{8\theta_3^3}-1\Bigr).
$$
Since $\theta_1\ne0$, we have $\theta_1^\delta\ne0$ and $\phi_1^\delta$ is $E_6$ singularity.

(iii): When $\delta\theta_1=1$, $\theta_2=\theta_3=0$ and $\theta_4\ne0$, we have $m^\delta=4$,  and 
$t=t_0s-\frac{\theta_5}{10\theta_4}{(t_0s)^2}/2+\cdots$, 
$t_0=|{\theta_1}/{\theta_4}|^{\frac14}$.
We thus conclude that 
$$
\theta^\delta_1=-\frac{\theta_1}{t_0^3\theta_4},\ 
\theta^\delta_2=\frac{\theta_1\theta_5}{10t_0^2\theta_4^2},\ 
\theta^\delta_3=\frac{\theta_1^2}{20t_0\theta_4^3}\Bigl(\theta_4\theta_6-\frac{21}{20}\theta_5^2\Bigr).
$$
We claim the last assertion, since we can extract that 
\[
w_{12}^\delta=\theta^\delta_1\theta^\delta_3-\tfrac{77}{48}\,(\theta_2^\delta)^2+\tfrac52\,(\theta^\delta_1)^4
=\frac{|{\theta_1}/{\theta_4}|^3\,\tilde{w}^\delta_{12}}{20}.
\qedhere
\]
\end{proof}
\begin{thm}\label{Thm:Parallel2}
The singularities of parallel curves of $\mathcal A$-simple singularities with multiplicity 2 ($m=2$) are described as follows:
\begin{itemize}
\item[\rm (i)] If $\phi_2$ is an $A_2$ singularity  at $0$ (i.e., $\theta_1\ne0$), then $\phi_2^\delta$, $\delta\ne0$,  is nonsingular at $0$.
\item[\rm (ii)] If $\phi_2$ is an $A_4$ singularity  at $0$ (i.e., $\theta_1=0$, $\theta_3\ne0$), then $\phi_2^\delta$
is $A_4$ (resp.~$E_8$) singularity  at $0$ if $\delta\ne\theta_2^{-1}$ (resp.~$=\theta_2^{-1}$).
\item[\rm (iii)] 
Assume that $\phi_2$ is an $A_{2k}$ singularity  at $0$ with $k\ge3$.
\begin{itemize}
\item[\rm (iii-a)] If $\delta\ne\theta_2^{-1}$, then $\phi_2^\delta$ is $A_{2k}$ singularity at $0$.
\item[\rm (iii-b)] If $\delta=\theta_2^{-1}$ and  $\theta_4\ne0$, then  $\phi_2^\delta$ is $W_{1,2k-5}^\#$ singularity at $0$.
\item[\rm (iii-c)] If $\delta=\theta_2^{-1}$ and  $\theta_4=0$, then  $\phi_2^\delta$ is 
a singularity of multiplicity $\ge$5 at $0$. In particular, $\phi_2^\delta$ is not $\mathcal A$-simple at $0$. 
\end{itemize}
\end{itemize}
\end{thm}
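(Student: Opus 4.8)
The plan is to treat each case by the same three-step template: first determine the multiplicity $m^\delta$ of $\phi_2^\delta$ from Lemma~\ref{Lem:MultParallel}; then compute the leading invariants $\theta_i^\delta$ of the parallel curve; and finally read off the singularity type from the criteria of Theorems~\ref{Thm:M2},~\ref{Thm:M3} and~\ref{Thm:M4}. The computation of the $\theta_i^\delta$ rests on the observation that $\phi_2$ and $\phi_2^\delta$ share the frame $\{\BS{t},\BS{n}\}$, so $\theta$ is a single function expressible in either curvature parameter: writing $t=t(s)$ for the reparametrisation determined by \eqref{Form:CurvParaPC} (always a local diffeomorphism $t=t_0 s+o(s)$), one simply has $\theta^\delta(s)=\theta(t(s))$, and the invariants $\theta_i^\delta$ are extracted by substitution.

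For part (i), $\theta_1\ne0$ gives $\ord\theta=1<2$, so Lemma~\ref{Lem:MultParallel} yields $m^\delta=1$ and $\phi_2^\delta$ is nonsingular for every $\delta\ne0$. For part (ii), if $\delta\ne\theta_2^{-1}$ then $1-\delta\theta_2\ne0$ and \eqref{Form:CurvParaPC} reads $\pm s^2/2=(1-\delta\theta_2)t^2/2+o(t^2)$, so $m^\delta=2$; a short low-order computation (of the type carried out in the proof of Theorem~\ref{Thm:Parallel1}) gives $\theta_1^\delta=0$, since $\ord\theta=2$, and $\theta_3^\delta\ne0$, so by Theorem~\ref{Thm:M2} the parallel is again $A_4$. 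If instead $\delta=\theta_2^{-1}$, the $t^2$ term cancels and \eqref{Form:CurvParaPC} becomes $\pm s^3/3!=-\theta_2^{-1}\theta_3\,t^3/3!+o(t^3)$, whence $m^\delta=3$; composing gives $\theta_1^\delta=0$ and $\theta_2^\delta\ne0$, which is precisely the $E_8$ criterion of Theorem~\ref{Thm:M3}~(ii).

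Part (iii) follows the same template, now exploiting a parity argument. In (iii-a), $\delta\ne\theta_2^{-1}$ forces $m^\delta=2$, and one must show the $A_{2k}$ vanishing pattern is inherited. Since the odd $\theta_i$ vanish for $i\le 2k-3$, the right-hand side of \eqref{Form:CurvParaPC} is even in $t$ through order $2k-2$, so $s(t)$ is odd up to that order and the first even correction to $t(s)$ appears only at order $2k-2$; substituting into $\theta$, whose first odd term is $\theta_{2k-1}t^{2k-1}$, keeps $\theta^\delta$ even below order $2k-1$, giving $\theta_1^\delta=\dots=\theta_{2k-3}^\delta=0$ and $\theta_{2k-1}^\delta\ne0$, i.e. $A_{2k}$. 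In (iii-b), $\delta=\theta_2^{-1}$ with $\theta_4\ne0$ cancels the $t^2$ term and, using $\theta_3=0$, leaves $\pm s^4/4!=-\theta_2^{-1}\theta_4\,t^4/4!+\cdots$, so $m^\delta=4$. Here the first odd coefficient $\theta_{2k-1}$ perturbs the otherwise-odd $s(t)$ by an even term of order $t^{2k-4}$, whose cross term with the leading $t_0 s$ inside $\theta_2\,t(s)^2/2$ produces the first nonzero odd invariant at index $2k-3$, while $\theta_1^\delta=0$, $\theta_2^\delta\ne0$ and $\theta_3^\delta=\dots=\theta_{2k-5}^\delta=0$; by Theorem~\ref{Thm:M4}~(ii) this is exactly $W^\#_{1,2k-5}$. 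Finally, in (iii-c), $\delta=\theta_2^{-1}$ and $\theta_4=0$ give $m^\delta=\min\{i:\theta_i\ne0,\ i>2\}\ge5$ by Lemma~\ref{Lem:MultParallel}, and since every $\mathcal A$-simple germ in Theorem~\ref{Thm:ClassificationASimple} has multiplicity at most $4$, $\phi_2^\delta$ is not $\mathcal A$-simple.

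I expect the genuine difficulty to lie in (iii-b): the parity and order bookkeeping must be made rigorous, verifying that the first even-order correction to $s(t)$ really appears at order $2k-4$ and that the coefficient of $s^{2k-3}$ in $\theta^\delta$ is genuinely nonzero rather than vanishing accidentally; one must likewise confirm in (iii-a) that no lower-order odd invariant is secretly created at order below $2k-1$. The remaining cases reduce to the short, explicit low-order computations already modelled in the proof of Theorem~\ref{Thm:Parallel1}.
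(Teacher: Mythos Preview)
Your approach is correct and essentially identical to the paper's: the paper also proceeds case by case, determines $m^\delta$ from Lemma~\ref{Lem:MultParallel}, inverts \eqref{Form:CurvParaPC} via the ansatz $t=\sigma\tau s$ (with the parity structure you describe) to extract the $\theta_i^\delta$, and then invokes Theorems~\ref{Thm:M2}--\ref{Thm:M4}. The nonvanishing you flag as the crux in (iii-b) is settled there by the explicit computation $\theta_{2k-3}^\delta=\pm\theta_2\,\tfrac{\theta_{2k-1}}{\theta_4}\,\tfrac{3!\,(2k-3)!\,\sigma^{2k-3}}{(2k-1)!}$, which is nonzero since $\theta_2,\theta_4,\theta_{2k-1}\ne0$.
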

\begin{proof}
(i): This case has already been trated in Theorem \ref{Thm:Parallel1} (i). 
But we present some computation to see the invariants $\theta_i^\delta$.
When $\theta_1\ne0$, we have $m^\delta=1$, $t=t_0s+\frac{1-\delta\theta_2}{\delta\theta_1}(t_0s)^2/2+o(s^2)$, $t_0=|1-\delta\theta_1|^{-1}$, 
$$
\theta^\delta_1=-\frac1{\delta},\ 
\theta^\delta_2=-\frac{t_0}{\delta^2\theta_1},\
\theta^\delta_3=-\frac{t_0^2}{\delta^3\theta_1^2}(-2+3\delta\theta_2-\frac1{1-\delta\theta_1}).
$$

(ii): When $\theta_1=0$ and $\delta\theta_2\ne1$, we have $m^\delta=2$, 
$t=t_0s-\frac{\delta\theta_3}{3(1-\delta\theta_2)^2}{(t_0s)^2}/2+o(s^2)$, 
$t_0=|1-\delta\theta_2|^{-\frac12}$, 
$$
\theta^\delta_1=0,\ 
\theta^\delta_2=\frac{\theta_2}{1-\delta\theta_2},\ 
\theta^\delta_3=\frac{t_0\theta_3}{(1-\delta\theta_2)^2},\ 
$$
and, if $\theta_3=0$, then 
$\theta^\delta_5=t_0^3\theta_5(1-\delta\theta_2)^{-2}$.

When $\theta_1=0$,  $\delta\theta_2=1$ and $\theta_3\ne0$, we have $m^\delta=3$, 
$t=t_0s-\frac{\theta_4}{\theta_3}{(t_0s)^2}/2+o(s^2)$, $t_0=|{\theta_2}/{\theta_3}|^{\frac13}$,
$$
\theta^\delta_1=0,\ 
\theta^\delta_2=-\frac{\theta_2^2}{t_0\theta_3},\
\theta^\delta_3=\theta_2\Bigl(\frac{\theta_2\theta_4}{2\theta_3^2}-1\Bigr),\ 
$$
Since $\theta_2\ne0$, we have $\theta_2^\delta\ne0$ and $\phi_2^\delta$ is $E_8$ singularity.

(iii):  Assume that $\delta\ne\theta_2^{-1}$. 
Then, by \eqref{Form:CurvParaPC}, we have 
$$
\pm\frac{s^2}2=(1-\delta\theta_2)\frac{t^2}2-\delta\sum_{i\ge4}\theta_i\frac{t^i}{i!}.
$$
Setting $t=\sigma\tau s$, $\sigma=|1-\delta\theta_2|^{-\frac12}$, we have 
$$
\pm\frac{s^2}2=\frac{\tau^2s^2}2
-\delta\theta_4\frac{(\sigma\tau s)^4}{4!}
-\cdots
-\theta_{2k-2}\frac{(\sigma\tau s)^{2k-2}}{(2k-2)!}
-\theta_{2k-1}\frac{(\sigma\tau s)^{2k-1}}{(2k-1)!}
-\cdots.
$$
Then we can set $\tau=1+\sum_{i\ge1}t_{i+1}s^i/i!$, and we conclude that the first odd oder term of $\tau$ is $\theta_{2k-1}\frac{\sigma^{2k-1}}{(2k-1)!}s^{2k-1}$.
Then we obtain that
\begin{align*}
\theta=
&\theta_2\frac{(\sigma\tau s)^2}2
+\theta_4\frac{(\sigma\tau s)^4}{4!}
+\cdots
+\theta_{2k-2}\frac{(\sigma\tau s)^{2k-2}}{(2k-2)!}
+\theta_{2k-1}\frac{(\sigma\tau s)^{2k-1}}{(2k-1)!}
+\cdots
\\
=&\sigma^2\theta_2\frac{s^2}2+{\theta^\delta_4}\frac{s^4}{4!}+\cdots+\theta_{2k-2}^\delta\frac{s^{2k-2}}{(2k-2)!}
+\sigma^{2k-1}\theta_{2k-1}\frac{s^{2k-1}}{(2k-1)!}+\cdots
\end{align*}
which concludes (iii-a).

Assume that $\delta=\theta_2^{-1}$. 
It is enough to show that the condition \eqref{Condition:Ak}  imply
$$
\theta_1^\delta=\theta_3^\delta=\cdots=\theta_{2k-5}^\delta=0, \theta_{2k-3}^\delta\ne0
$$
whenever $\theta_4\ne0$. 
Let $t$ be a curvature parameter of $\phi_2$ and $s$ be that of $\phi_2^\delta$.
Then we have 
$$
\pm \frac{s^4}{4!}=\frac{t^2}2-\delta\theta=\CR{-}\theta_2^{-1}\sum_{i\ge4}\theta_i\frac{t^i}{i!}.
$$
Since $\delta=\theta_2^{-1}$, this implies that 
$$
\pm\frac{s^4}{4!}=
-\theta_2^{-1}
\Bigl(\theta_4\frac{t^4}{4!}+\theta_6\frac{t^6}{6!}+\cdots+\theta_{2k-2}\frac{t^{2k-2}}{(2k-2)!}+
\theta_{2k-1}\frac{t^{2k-1}}{(2k-1)!}+\cdots\Bigr). 
$$
Setting $t=\sigma\tau s$, $\sigma=|\theta_2/\theta_4|^{1/4}$, we have 
\begin{align*}
\mp\frac{1}{4!}
=&
\frac{\tau^4}{4!}
+\frac{\theta_6}{\theta_4}\frac{\tau^6(\sigma s)^2}{6!}
+\cdots
+\frac{\theta_{2k-2}}{\theta_4}\frac{\tau^{2k-2}(\sigma s)^{2k-6}}{(2k-2)!}
+\frac{\theta_{2k-1}}{\theta_4}\frac{\tau^{2k-1}(\sigma s)^{2k-5}}{(2k-1)!}+\cdots.
\end{align*}
We write this relation as 
\begin{equation}\label{Compare1}
\mp\frac{\tau^{-4}}{4!}=\frac1{4!}
+\frac{\theta_6}{\theta_4}\frac{\tau^2(\sigma s)^2}{6!}
+\cdots
+\frac{\theta_{2k-2}}{\theta_4}\frac{\tau^{2k-4}(\sigma s)^{2k-6}}{(2k-2)!}
+\frac{\theta_{2k-1}}{\theta_4}\frac{\tau^{2k-5}(\sigma s)^{2k-5}}{(2k-1)!}+\cdots.
\end{equation}
We conclude that we can write  
$$
\tau=1+a_1s^2+\cdots+a_{k-3}s^{2k-6}+bs^{2k-5}+\cdots.
$$
Comparing the coefficients of $s^{2k-5}$ in the both sides of the equation \eqref{Compare1}, we obtain 
$$
b=\pm\frac{\theta_{2k-1}}{\theta_4}\frac{3!\,\sigma^{2k-5}}{(2k-1)!}.
$$
Setting $\tau=1+s^2A(s^2)+s^{2k-5}B(s^2)$, 
\begin{align*}
\theta
=&\sum_{i\ge2}\theta_i\frac{t^i}{i!}
=\sum_{i\ge2}\theta_i\frac{\sigma^i\tau^is^i}{i!}\\
=&\sum_{i\ge1}\theta_{2i}\sigma^{2i}s^{2i}\sum_{i_0+i_1\le 2i}\frac{s^{2i_0+(2k-5)i_1}A(s^2)^{i_0}B(s^2)^{i_1}}{(2i-i_0-i_1)!i_0!i_1!}\\
+&\sum_{i\ge k-3}\theta_{2i+1}\sigma^{2i+1}s^{2i+1}\sum_{i_0+i_1\le 2i+1}\frac{s^{2i_0+(2k-5)i_1}A(s^2)^{i_0}B(s^2)^{i_1}}{(2i+1-i_0-i_1)!i_0!i_1!}.
\end{align*}
We remark that the coefficients of $s^i$ in this series yield $\theta^\delta_i$,  and we obtain the first nonzero 
$\theta^\delta_i$ with odd $i$ is 
\begin{align*}
\theta^\delta_{2k-3}
=\pm(2k-3)!\theta_2\sigma^2b
=\pm\theta_2\frac{\theta_{2k-1}}{\theta_4}\frac{3!\,(2k-3)!\,\sigma^{2k-3}}{(2k-1)!}.
\end{align*}
We thus conclude (iii-b). 

The item (iii-c) is a consequence of Lemma \ref{Lem:MultParallel}.
\end{proof}
\begin{figure}
\begin{center}
\includegraphics[width=0.5\textwidth,bb=0 0 360 321]{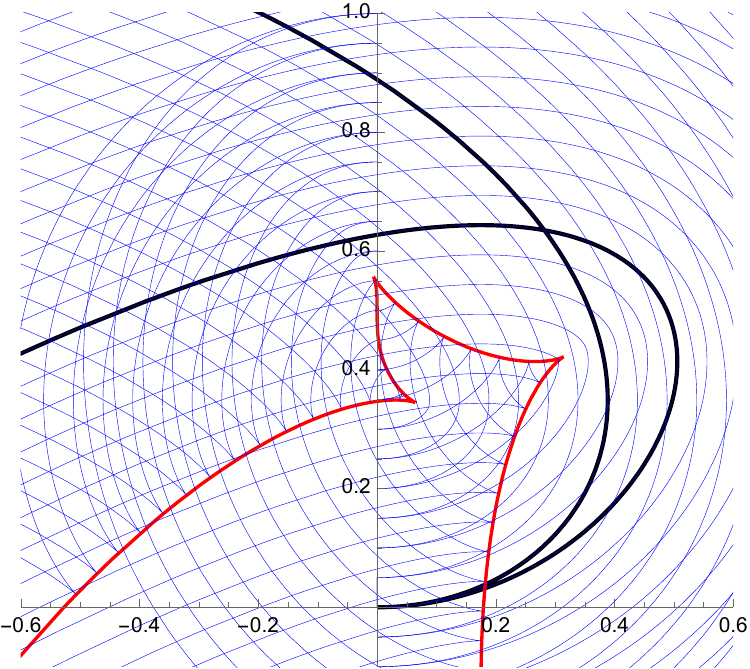}\\
\caption{Curve $(s^2/2 - s^6/12 + s^7/42 -7s^8/576,  s^4/4 - s^5/30 + s^6/72 - s^8/48)$ (thick), 
its parallel curves (blue) and the evolute (red).} 
\end{center}
\end{figure}
\begin{thm}\label{Thm:Parallel3}
The singularities of parallel curves of $\mathcal A$-simple singularities with multiplicity 3 ($m=3$) are described as follows:
\begin{itemize}
\item[\rm (i)] 
If $\phi_3$ is an $E_6$ singularity at $0$ (i.e., $\theta_1\ne0$), then 
$\phi_3^\delta$, $\delta\ne0$, is nonsingular at $0$.
\item[\rm (ii)] 
If $\phi_3$ is an $E_8$ singularity  at $0$ (i.e., $\theta_1=0$, $\theta_2\ne0$), then 
$\phi_3^\delta$, $\delta\ne0$, is an $A_4$ singularity at $0$. 
\item[\rm (iii)] 
Assume that $\phi_3$ is an $E_{12}$ singularity  at $0$ (i.e., $\theta_1=\theta_2=0$, $\theta_4\ne0$). 
\begin{itemize}
\item[\rm (iii-a)] If $\delta\ne\theta_3^{-1}$, then $\phi_3^\delta$ is a $E_{12}$ singularity at $0$. 
Moreover, we conclude that
$\phi_3^\delta$ is $\mathcal A$-equivalent to $(t^3,t^7+t^8)$ (resp.~$(t^3,t^7)$) at $0$ if 
$\delta(3\theta_3\theta_5-5\theta_4^2)-3\theta_5\ne0$ (resp.~$=0$).
\item[\rm (iii-b)] If $\delta=\theta_3^{-1}$, then $\phi_3^\delta$ is a $W_{18}$ singularity at $0$. 
Moreover, we conclude that $\phi_3^\delta$ is $\mathcal A$-equivalent to  
\begin{itemize}
\item $(t^4,t^7\pm t^9)$ at $0$ if $\mp(\theta_4\theta_6-\frac{27}{20}\,\theta_5^2)>0$,
\item $(t^4,t^7\pm t^{13})$ at $0$ if $\theta_4\theta_6-\frac{27}{20}\,\theta_5^2=0$ and $\mp\tilde{w}_{18}^\delta>0$,
\item $(t^4,t^7)$  at $0$ if $\theta_4\theta_6-\frac{27}{20}\,\theta_5^2=0$ and $\tilde{w}_{18}^\delta=0$, 
\end{itemize}
\end{itemize}
where 
$\tilde{w}_{18}^\delta=
\theta_4^5\theta_{10}
-\frac{13}2\theta_4^4\theta_5\theta_9
+\frac{117}{10}\theta_4^3\theta_5^2\theta_8
-\frac{13923}{2000}\theta_4^2\theta_5^3\theta_7
-\frac{273}{100}\theta_4^4\theta_7^2
-\frac{2269449}{160000}\theta_5^6
-\frac{8120}3\theta_3^2\theta_4^6$.
\item[\rm (iv)] 
Assume that $\phi_3$ is an $E_{14}$ singularity  at $0$ (i.e., $\theta_1=\theta_2=\theta_4=0$, $\theta_5\ne0$). 
Then $\phi_3^\delta$ is $\mathcal A$-equivalent to 
\begin{itemize}
\item[\rm (iv-a)] an $E_{14}$ singularity at $0$, more precisely, $\mathcal A$-equivalent to $(t^3,t^8+t^9)$ 
(resp.~$(t^3,t^8)$), 
if $\delta\ne\theta_3^{-1}$ and $\delta(2\theta_3\theta_7-7\theta_5^2)-2\theta_7\ne0$ (resp.~$=0$);
\item[\rm (iv-b)] a singularity of multiplicity 5  at $0$ if $\delta=\theta_3^{-1}$. 
\end{itemize}
\end{itemize}
\end{thm}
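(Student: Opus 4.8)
The plan is to handle all four cases by the uniform three-step method already used for Theorems \ref{Thm:Parallel1} and \ref{Thm:Parallel2}. First, I would determine the multiplicity $m^\delta$ of the parallel curve directly from Lemma \ref{Lem:MultParallel}, reading $\ord\theta$ off the vanishing pattern of the $\theta_i$ (so $\ord\theta=1,2$ in cases (i),(ii), while in (iii),(iv) it is $3$ or higher according to whether $\theta_3$ survives). Second, with $m^\delta$ fixed I would substitute $t=\sigma\tau s$, where $\sigma\ne0$ is a scale factor and $\tau$ is a power series in $s$ with $\tau(0)=1$, into the defining relation \eqref{Form:CurvParaPC}, determine $\sigma$ from the surviving leading coefficient $1-\delta\theta_3$ (or $-\delta\theta_4$ when that vanishes), and solve recursively for the coefficients of $\tau$. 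Third, feeding this series into $\theta=\sum_i\theta_i t^i/i!=\sum_i\theta^\delta_i s^i/i!$ expresses the parallel invariants $\theta^\delta_i$ as explicit functions of the $\theta_j$ and $\delta$, after which the criteria of Theorems \ref{Thm:M3} and \ref{Thm:M4} identify the type.

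Cases (i) and (ii) are essentially multiplicity bookkeeping. For (i), $\ord\theta=1<3$, so $m^\delta=1$ and $\phi_3^\delta$ is nonsingular for all $\delta\ne0$. For (ii), $\ord\theta=2<3$ gives $m^\delta=2$, whence $\theta^\delta_1=0$ automatically; the point is to check $\theta^\delta_3\ne0$, and I expect a short computation to show $\theta^\delta_3$ to equal, essentially, $1/\delta$ times the cube of the leading reparametrisation coefficient (the surviving $t^3$-term of \eqref{Form:CurvParaPC}), hence nonzero for every $\delta\ne0$, so that $\phi_3^\delta$ is $A_4$ by Theorem \ref{Thm:M2}.

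Case (iii) splits according to the factor $1-\delta\theta_3$. For $\delta\ne\theta_3^{-1}$ this factor is nonzero, $m^\delta=3$, and the vanishing $\theta^\delta_1=\theta^\delta_2=0$, $\theta^\delta_4\ne0$ persists (the last requiring a short check that $\theta_4\ne0$ forces $\theta^\delta_4\ne0$), so $\phi_3^\delta$ is again $E_{12}$ by \eqref{Condition:E_6k}; the $(t^3,t^7+t^8)$ versus $(t^3,t^7)$ dichotomy is then decided by $\theta^\delta_5$, which I would compute and identify, up to a nonzero factor, with $\delta(3\theta_3\theta_5-5\theta_4^2)-3\theta_5$. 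For $\delta=\theta_3^{-1}$ the cubic term drops out, $m^\delta=4$, and $-\delta\theta_4$ fixes $\sigma=|\theta_3/\theta_4|^{1/4}$; now $\theta^\delta_1=\theta^\delta_2=0$, $\theta^\delta_3\ne0$ give $W_{18}$ by Theorem \ref{Thm:M4}(iii), and the three sub-cases are separated by $\theta^\delta_5$ (to be shown proportional to $\theta_4\theta_6-\tfrac{27}{20}\theta_5^2$) and by the modulus $w^\delta_{18}=\theta^\delta_3\theta^\delta_9-\tfrac{4641}{1000}(\theta^\delta_6)^2+812(\theta^\delta_3)^4$, which I would evaluate and match with $\tilde w^\delta_{18}$ up to a nonzero factor.

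Case (iv) follows the same pattern. For (iv-a), $\delta\ne\theta_3^{-1}$ gives $m^\delta=3$; here the absence of a $t^4$-term in \eqref{Form:CurvParaPC} (because $\theta_4=0$) forces the first reparametrisation correction to vanish, hence $\theta^\delta_4=0$, and together with $\theta^\delta_5\ne0$ this yields $E_{14}$ by \eqref{Condition:E_6k+2}; the normal-form refinement is governed by $\theta^\delta_7$, which I would match with $\delta(2\theta_3\theta_7-7\theta_5^2)-2\theta_7$. For (iv-b), $\delta=\theta_3^{-1}$ kills the cubic term and, since $\theta_4=0$ but $\theta_5\ne0$, Lemma \ref{Lem:MultParallel} forces $m^\delta=5$, which lies outside the $\mathcal A$-simple range. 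The main obstacle is purely computational and sits in (iii-b) and (iv-a): pushing the reparametrisation $t=\sigma\tau s$ to high enough order to extract $\theta^\delta_5$, $\theta^\delta_7$ and, above all, the quartic modulus $w^\delta_{18}$, and then verifying that these collapse exactly to the stated polynomials, including confirmation of the precise rational coefficients $\tfrac{27}{20}$, $\tfrac{4641}{1000}$, $812$ and the various constants in $\tilde w^\delta_{18}$. I would organise this order-by-order in $s$, as in the coefficient tables of \S\ref{4}, reserving a symbolic computation for the numerical checks.
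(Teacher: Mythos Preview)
Your proposal is correct and follows essentially the same approach as the paper: in each case the paper determines $m^\delta$ via Lemma~\ref{Lem:MultParallel}, solves \eqref{Form:CurvParaPC} for the reparametrisation $t=t_0s+\cdots$, reads off the $\theta^\delta_i$ from the identity $\theta=\sum_i\theta_i t^i/i!=\sum_i\theta^\delta_i s^i/i!$, and then invokes Theorems~\ref{Thm:M2}--\ref{Thm:M4}. Your anticipated expressions for $\theta^\delta_4$, $\theta^\delta_5$, $\theta^\delta_7$ and the reduction of $w^\delta_{18}$ to a nonzero multiple of $\tilde w^\delta_{18}$ are exactly what the paper computes, and the only substantive work, as you note, is the high-order expansion in (iii-b).
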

\begin{proof}
The cases (i), (ii) have been trated in Theorem \ref{Thm:Parallel1} (ii), \ref{Thm:Parallel2} (ii). 
But we present some computation to see the invariants $\theta_i^\delta$. 

(i): When $\theta_1\ne0$, we have $m^\delta=1$, 
$t=t_0s-\frac{\theta_2}{\theta_1}(t_0s)^2/2+o(s^2)$, $t_0=|\delta\theta_1|^{-1}$, 
$$
\theta^\delta_1=-\frac1{\delta},\ 
\theta^\delta_2=0,\
\theta^\delta_3=-\frac{t_0^2}{\delta^2\theta_1}.
$$
In this case, $\phi_3^\delta$ is nonsingular.

(ii):
When $\theta_1=0$ and $\theta_2\ne0$, we have $m^\delta=2$, 
$t=t_0s+\frac{1-\delta\theta_3}{3\delta\theta_2}(t_0s)^2/2+o(s^2)$, $t_0=|\delta\theta_2|^{-\frac12}$, 
$$
\theta^\delta_1=0,\ 
\theta^\delta_2=-\frac1\delta,\ 
\theta^\delta_3=-\frac{t_0}{\delta^2\theta_2}.  
$$
In this case, $\phi_3^\delta$ has $A_4$ singularity at $s=0$.

(iii): When $\theta_1=\theta_2=0$ and $\delta\theta_3\ne1$, we have $m^\delta=3$, 
$t=t_0s+\frac{\delta\theta_4}{6(1-\delta\theta_3)}(t_0s)^2/2+o(s^2)$, $t_0=|1-\delta\theta_3|^{-\frac13}$, 
$$
\theta^\delta_1=0,\ 
\theta^\delta_2=0,\ 
\theta^\delta_3=\frac{\theta_3}{1-\delta\theta_3},\ 
\theta^\delta_4=\frac{t_0\theta_4}{(1-\delta\theta_3)^2},\ 
\theta^\delta_5= t_0^2\Bigl(\frac{\theta_5}{(1-\delta\theta_3)^2}+\frac{5\delta\theta_4^2}{3(1-\delta\theta_3)^3}\Bigr). 
$$

If $\theta_4\ne0$, then $\theta_4^\delta\ne0$ and $\phi_3^\delta$ is $E_{12}$ singularity, and we have (iii-a).

When $\theta_1=\theta_2=0$, $\delta\theta_3=1$ and $\theta_4\ne0$, we have $m^\delta=4$, 
$t=t_0s+\frac{\theta_5}{10\theta_4}(t_0s)^2/2+o(s^2)$, $t_0=|{\theta_3}/{\theta_4}|^{\frac14}$,
$$
\theta^\delta_1=0,\ 
\theta^\delta_2=0,\
\theta^\delta_3=-\frac{\theta_3^2}{t_0\theta_4},\ 
\theta^\delta_4=\theta_3\Bigl(\frac{3\theta_3\theta_5}{5\theta_4^2}-1\Bigr),\ 
\theta^\delta_5=\frac{t_0\theta_3^2}{2\theta_4^3}\Bigl(\theta_4\theta_6-\frac{27}{20}\theta_5^2\Bigr).
$$
When $\theta_4\theta_6-\frac{27}{20}\theta_5^2=0$, we obtain that 
\begin{align*}
\theta^\delta_6=&\frac{t_0^2\theta_3^2}{\theta_4^4}\Bigl(\frac37\theta_4^2\theta_7-\frac{39}{40}\theta_5^3\Bigr),\\
\theta^\delta_9=&\frac{3t_0^5\theta_3^2}{10\theta_4^2}\Bigl(
\theta_{10}
-\frac{13\theta_5\theta_9}{2\theta_4}
+\frac{117\theta_5^2\theta_8}{10\theta_4^2}
+\frac{1989\theta_5^3\theta_7}{7\theta_4^3}
-\frac{39\theta_7^2}{7\theta_4}
-\frac{1155609\theta_5^6}{40000\theta_4^5}
\Bigr).
\end{align*}
We thus extract that 
$w_{18}^\delta=\theta^\delta_3\theta^\delta_9-\frac{4691}{1000}(\theta^\delta_6)^2+812(\theta^\delta_3)^4
=-\frac{3t_0^4\theta_3^4\tilde{w}_{18}}{10\theta_4^8}$.

(iv): If $\theta_1=\theta_2=\theta_4=0$ and $\theta_5\ne0$, then we have 
$$
\theta^\delta_5=\frac{t_0^2\theta_5}{(1-\delta\theta_3)^2},\ 
\theta^\delta_6=\frac{t_0^3\theta_6}{(1-\delta\theta_3)^2},\ 
\theta^\delta_7=t_0^4\Bigl(
\frac{\theta_7}{(1-\delta\theta_3)^2}+\frac{7\delta\theta_5^2}{2(1-\delta\theta_3)^3}\Bigr), 
$$
and we obtain the first item in (iv).

The second item of (iv) is a consequence of Lemma \ref{Lem:MultParallel}.
\end{proof}
\begin{rem}\label{SexCurv}
If $\phi_3$ has an $E_{12}$ (resp.~$E_{14}$) singularity,  
which is $\mathcal A$-equivalent to $(t^3,t^7+t^8)$ (resp.~$(t^3,t^8+t^9))$, that is, $\theta_5\ne0$ 
(resp.~$\theta_7\ne0$), then 
the singularity of the parallel curve $\phi_3^\delta$ degenerates to a singularity $\mathcal A$-equivalent to 
$(t^3,t^7)$ (resp.~$(t^3,t^8)$) when $\delta^{-1}$ is equal to
\begin{equation}\label{SecondCurv} 
\theta_3-\frac53\frac{\theta_4^2}{\theta_5}\quad
\Bigl(\text{resp.~}\theta_3-\frac72\frac{\theta_5^2}{\theta_7}\Bigr).
\end{equation}
This quantity can be regarded as an analogue of curvature.
We refer to the phenomena in (iii-a) (resp.~(iv-a)) as an {\bf equi-multiple degeneration}, and call the quantity \eqref{SecondCurv} the {\bf pseudo-curvature} of $E_{12}$ (resp.~$E_{14}$) singularity. 
\end{rem}
We generarize this phenomenon to the parallel curves of $E_{6k}$ and $E_{6k+2}$ singularities for $k\ge3$ as follows: 
\begin{thm}\label{E6kandE6k+2}
Assume that $\phi_3$ has an $E_{6k}$ (resp.~$E_{6k+2}$) singularity at $0$ with $k\ge3$. 
\begin{itemize}
\item Assume that $\delta\ne\theta_3^{-1}$. Then $\phi_3^\delta$ is an $E_{6k}$ (resp.~$E_{6k+2}$) singularity at $0$. 
More precisely, if $\phi_3$ is $\mathcal A$-equivalent to 
\begin{equation}\label{NF:E6k}
(t^3,t^{3k+1}+\varepsilon_pt^{3(k+p)+2})\quad(\text{resp.}~(t^3,t^{3k+1}+\varepsilon_pt^{3(k+p)+2}))\ \text{at $0$},
\end{equation}
then $\phi_3^\delta$ is $\mathcal A$-equivalent to $\phi_3$ at $0$
whenever \eqref{Condition:E_6k}  (resp.~\eqref{Condition:E_6k+2}) holds for  $0\le p<k-2$.
When \eqref{Condition:E_6k}  (resp.~\eqref{Condition:E_6k+2}) holds for  $p=k-2$,
$\phi_3^\delta$ has an equi-multiple degeneration, that is, it degenerates to 
$(t^3,t^{3k+1})$ (resp.~$(t^3,t^{3k+2})$)  at $0$ if and only if $\delta^{-1}$ satisfies 
$$
\tfrac1{\delta^{-1}-\theta_3}\,
\tfrac{2\theta_{3l_1+1}^2}{(3l_1)!(3l_1+1)!}
+\tfrac{\theta_{3l_2+2}}{(3l_2+2)!}=0\quad 
(\text{resp.} \ 
\tfrac{\theta_{3l_1+1}}{(3l_1+1)!}+\tfrac1{\delta^{-1}-\theta_3}\,
\tfrac{2\theta_{3l_2+2}^2}{(3l_2+1)!(3l_2+2)!}=0),
$$
where
\begin{equation}\label{L1L2}
(l_1,l_2)
=(k-1,k+p-1) \quad (\text{resp.}\ (k+p,k-1)).
\end{equation}
\item If $\delta^{-1}=\theta_3$, $\phi_3^\delta$ is of multiplicity $\ge5$ at $0$. 
In particular, $\phi_3^\delta$ is not $\mathcal A$-simple. 
\end{itemize}
\end{thm}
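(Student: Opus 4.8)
The plan is to exploit that $\phi_3$ and its parallel $\phi_3^\delta$ carry the same frame, hence the same angle function $\theta$. With $m=3$, relation \eqref{Form:CurvParaPC} reads $\pm s^3/3!=t^3/3!-\delta\theta$, so
\[
\theta^\delta(s)=\theta(t(s))=\frac{1}{6\delta}\bigl(t(s)^3\mp s^3\bigr),
\]
where $t$ is the curvature parameter of $\phi_3$ and $s$ that of $\phi_3^\delta$. The $E_{6k}$ and $E_{6k+2}$ conditions with $k\ge3$ force $\theta_1=\theta_2=\theta_4=\theta_5=0$, and since $\delta^{-1}=\theta_3$ appears in the statement we have $\theta_3\ne0$, so $\ord\theta=3$. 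Lemma \ref{Lem:MultParallel} then gives $m^\delta=3$ when $\delta\ne\theta_3^{-1}$ and $m^\delta=\min\{i>3:\theta_i\ne0\}\ge6$ when $\delta=\theta_3^{-1}$; as $6\ge5$, Theorem \ref{Thm:ClassificationASimple} shows $\phi_3^\delta$ is not $\mathcal A$-simple, which is the second bullet.

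For $\delta\ne\theta_3^{-1}$ put $c=1-\delta\theta_3\ne0$ and substitute $t=\sigma s\,w(s)$ with $\sigma=|c|^{-1/3}$, $w(0)=1$. Feeding this into \eqref{Form:CurvParaPC} produces a fixed-point equation of the shape
\[
w^3=1+6\delta\sum_{i\ge4}\theta_i\,\frac{\sigma^i s^{\,i-3}w^i}{i!},
\]
which determines $w$ recursively (the various signs being fixed by $\sigma^3=|c|^{-1}$ and the orientation convention in \eqref{Form:CurvParaPC}). I would then grade every series by the residue of the exponent of $s$ modulo $3$. By the displayed formula, the residue-$2$ part of $\theta^\delta$ is $\sigma^3 s^3/(6\delta)$ times the residue-$2$ part of $w^3$, and the right-hand side above computes the latter directly, bypassing the need to solve for the residue-$2$ part of $w$.

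The heart of the matter (for $E_{6k}$; the $E_{6k+2}$ case is identical after interchanging residues $1$ and $2$, which is exactly the swap \eqref{L1L2}) is to locate the first nonzero residue-$2$ invariant. The residue-$1$ part of $w$ first appears at order $s^{3k-5}$, driven by $\theta_{3k-2}$, with leading coefficient $2\delta\theta_{3k-2}\sigma^{3k-2}/(3k-2)!$ up to the orientation sign. In the residue-$2$ coefficient of $w^3$ feeding $\theta^\delta_{3(k+p)-1}$ there are only two candidate sources: the direct term from $\theta_{3(k+p)-1}$, and the quadratic feedback from the residue-$1$ part of $w^{3k-2}$ inside the $\theta_{3k-2}$ summand; an order count rules out every residue-$0$ summand $\theta_{3j}$, $j\ge2$, since its earliest residue-$2$ output lands at order $s^{3j+6k-13}$ with $3j+6k-13\ge6k-7$. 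The feedback sits at order $s^{6k-7}$, strictly above the direct term precisely when $0\le p<k-2$. In that range $\theta^\delta_{3(k+p)-1}$ is a nonzero multiple of $\theta_{3(k+p)-1}$, all lower residue-$2$ invariants vanish, and since $\theta^\delta_{3k-2}$ is the corresponding multiple of $\theta_{3k-2}$ the sign determining $\varepsilon_p$ in \eqref{ia} is preserved (the extra factor $\sigma^{3p}$ being positive). Theorem \ref{Thm:M3} then yields $\phi_3^\delta\sim_{\mathcal A}\phi_3$, the first bullet for $p<k-2$.

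When $p=k-2$ the direct and feedback sources collide at order $s^{6k-7}$, and combining them shows that $\theta^\delta_{6k-7}$ is a nonzero scalar multiple of
\[
\frac{\theta_{6k-7}}{(6k-7)!}+\frac{1}{\delta^{-1}-\theta_3}\cdot\frac{2\,\theta_{3k-2}^2}{(3k-3)!\,(3k-2)!},
\]
where the collapse of the powers of $\sigma$ and the orientation sign into $(\delta^{-1}-\theta_3)^{-1}$ uses $\sigma^3=|1-\delta\theta_3|^{-1}$ together with $c=1-\delta\theta_3$. Its vanishing is precisely the equi-multiple degeneration to $(t^3,t^{3k+1})$, i.e.\ the stated condition with $(l_1,l_2)=(k-1,k+p-1)$; the $E_{6k+2}$ condition follows by the same computation with $\theta_{3k-2}$ replaced by the main residue-$2$ coefficient $\theta_{3k-1}$. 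I expect the main obstacle to be exactly this residue-graded bookkeeping: one must verify both the order count that isolates the two surviving contributions and the precise constant $2/[(3k-3)!\,(3k-2)!]$ in the feedback term, as it is this constant that appears in the degeneration condition.
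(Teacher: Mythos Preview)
Your approach is correct and follows the same residue-mod-3 philosophy as the paper: both arguments rewrite $t$ in the curvature parameter $s$ of $\phi_3^\delta$ via \eqref{Form:CurvParaPC} and then track the residue-1 and residue-2 parts of $\theta^\delta$ separately. The paper packages this by writing $t=t_0s\bigl[1+s^3T_0(s^3)+s^{3l_1-2}T_1(s^3)+s^{3l_2-1}T_2(s^3)\bigr]$, expanding $\theta(t(s))$ multinomially, and then solving for $T_1(0)$ and $T_2(0)$ from the relation \eqref{Form:CurvParaPC}; the contributions $\theta_3\,T_1(0)^2$, $\theta_3\,T_2(0)$, $\theta_{3l_1+1}T_1(0)$ and $\theta_{3l_2+2}$ are then combined and collapse to the stated degeneration condition. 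Your route is a genuine streamlining: by first absorbing $\theta_3$ into $c=1-\delta\theta_3$ (so that the fixed-point for $w^3$ starts at $i\ge4$) and then using the identity $\theta^\delta=\tfrac{1}{6\delta}(t^3\mp s^3)$, you read the residue-2 part of $\theta^\delta$ directly off the residue-2 part of $w^3$ and never need the residue-2 part of $w$ itself (the paper's $T_2(0)$). The two $\theta_3$-contributions in the paper's expansion are exactly what your method absorbs into the factor $\sigma^3=(1-\delta\theta_3)^{-1}$, which is why your final formula matches the paper's after the paper combines its four terms.

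Two small points of bookkeeping to tighten. First, your order count ``$3j+6k-13\ge 6k-7$'' mixes scales: the left side is an exponent in $w^3$, while $6k-7$ is the target exponent in $\theta^\delta$; the two differ by $3$. What you actually need (and what holds) is $3j+6k-13>6k-10$ for $j\ge2$, so no residue-0 summand $\theta_{3j}$ reaches the coefficient of $s^{6k-10}$ in $w^3$. Second, the sign-preservation factor relating $\theta^\delta_{3(k+p)-1}/\theta^\delta_{3k-2}$ to $\theta_{3(k+p)-1}/\theta_{3k-2}$ is $\sigma^{3p+1}$, not $\sigma^{3p}$; since $\sigma>0$ this does not affect your conclusion.
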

\begin{proof}
These are consequences of the following lemma.
\end{proof}
\begin{lem}
In the setup of Theorem \ref{E6kandE6k+2}, we have the following:
\begin{align}
\theta^\delta_3=&t_0^3\theta_3\quad
\text{where   \ $t_0=(1-\delta\theta_3)^{-\frac13}$}.
\label{LL3}
\end{align}
For $E_{6k}$ singularity, we have 
\begin{align*}
\theta^\delta_{3i+1}=&
\begin{cases}
0,&0\le i<l_1,\\
\frac{\theta_{3l_1+1}t_0^{3l_1	+1}}{1-\delta\theta_3},&i=l_1,
\end{cases}
\\
\theta^\delta_{3i+2}
=&
\begin{cases}
0,&0\le i<l_2,\\
\frac{\theta_{3l_2+2}t_0^{3l_2+2}}{1-\delta\theta_3},&i=l_2, 0\le p<k-2,\\
\frac{t_0^{3l_2+2}}{1-\delta\theta_3}\Bigl(
\theta_{3l_2+2}
+\frac{2(3l_2+2)!\,\delta\,\theta_{3l_1+1}^2}{(3l_1)!(3l_1+1)!(1-\delta\theta_3)}
\Bigr),
&i=l_2, p=k-2.
\end{cases}
\end{align*}
For $E_{6k+2}$ singularity, we have 
\begin{align*}
\theta^\delta_{3i+1}=&
\begin{cases}
0,&0\le i<l_1,\\
\frac{\theta_{3l_1+1}t_0^{3l_1	+1}}{1-\delta\theta_3},&i=l_1,0\le p<k-2,\\
\frac{t_0^{3l_1+1}}{1-\delta\theta_3}\Bigl(
\theta_{3l_1+1}
+
\frac{2(3l_1+1)!\,\delta\theta_{3l_2+2}^2}{(3l_2+1)!(3l_2+2)!(1-\delta\theta_3)}
\Bigr),
&i=l_1, p=k-2,
\end{cases}
\\
\theta^\delta_{3i+2}
=&
\begin{cases}
0,&0\le i<l_2,\\
\frac{\theta_{3l_2+2}t_0^{3l_2+2}}{1-\delta\theta_3},&i=l_2.
\end{cases}
\end{align*}
\end{lem}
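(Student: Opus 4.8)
The plan is to invert the implicit relation \eqref{Form:CurvParaPC} between the curvature parameters $t$ of $\phi_3$ and $s$ of $\phi_3^\delta$, substitute the resulting series $t=t(s)$ into the angle function $\theta=\sum_i\theta_i t^i/i!$ (common to both curves), and read off the $\theta^\delta_i$ as its Taylor coefficients in $s$. Since $\ord\theta=3=m$ and $\delta\ne\theta_3^{-1}$, Lemma \ref{Lem:MultParallel} gives $m^\delta=3$, so with $\theta_1=\theta_2=0$ the relation \eqref{Form:CurvParaPC} reads
\[
\pm\frac{s^3}{6}=(1-\delta\theta_3)\frac{t^3}{6}-\delta\sum_{i>3}\theta_i\frac{t^i}{i!}.
\]
Writing $t=t_0 s\,\tau$ with $t_0=(1-\delta\theta_3)^{-1/3}$ and $\tau=1+o(1)$, and choosing the sign compatibly with the orientation, cancels the leading $s^3$ and turns this into a fixed-point equation for $\tau$; the $s^3$-coefficient of $\theta=\theta_3 t^3/6+o(t^3)$ then gives \eqref{LL3}.

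I would treat $E_{6k}$ in detail, the case $E_{6k+2}$ being obtained by interchanging the residue classes $1$ and $2\pmod3$ (equivalently the roles of $l_1$ and $l_2$). Solving the fixed-point equation order by order, the hypotheses \eqref{Condition:E_6k} and \eqref{ia} leave, below the critical order, only two active coefficients: $\theta_{3l_1+1}$ ($=\theta_{3k-2}$, residue $1$) and, when $p=k-2$, $\theta_{3l_2+2}$ (residue $2$). Thus the first non-residue-$0$ deviation of $\tau$ is the monomial $c_1 s^{3l_1-2}$ with $c_1=\tfrac{2\delta\,\theta_{3l_1+1}t_0^{3l_1+1}}{(3l_1+1)!}$. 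The essential bookkeeping device is that residues add: each summand $\theta_j t_0^j s^j\tau^j/j!$ of $\theta$ contributes at orders $j$ plus sums of $\tau$-correction orders, and the \emph{uncontrolled} residue-$0$ coefficients $\theta_{3j}$ generate only residue-$0$ corrections of $\tau$. Consequently no contribution can reach a residue-$1$ order below $3l_1+1$ nor a residue-$2$ order below $3l_2+2$, which together with the vanishing of the corresponding direct coefficients yields $\theta^\delta_{3i+1}=0$ $(i<l_1)$ and $\theta^\delta_{3i+2}=0$ $(i<l_2)$.

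For each leading nonzero value the mechanism is the same: the direct contribution of $\theta_{3l_1+1}$ (resp.\ $\theta_{3l_2+2}$) combines with the linear term that this coefficient induces in $\tau^3$ through the summand $\theta_3 t^3/6$, and the identity $1+\delta\theta_3 t_0^3=(1-\delta\theta_3)^{-1}$ produces the factor $(1-\delta\theta_3)^{-1}$ recorded in the formulas. The only genuinely new point is the degenerate case $p=k-2$, where $2l_1-1=l_2$ forces the residue-$2$ order $3l_2+2$ to coincide with $3+2(3l_1-2)$: there the quadratic term $3c_1^2$ of $\tau^3$ collides with the direct $\theta_{3l_2+2}$ contribution, and collecting both (again using $t_0^3(1-\delta\theta_3)=1$ to simplify) produces the stated quadratic coefficient $\tfrac{2(3l_2+2)!\,\delta\,\theta_{3l_1+1}^2}{(3l_1)!(3l_1+1)!(1-\delta\theta_3)}$. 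I expect the main obstacle to be exactly this residue accounting — confirming that the coefficients $\theta_{3(k+j)\mp1}$ and $\theta_{3j}$ that the hypotheses do not constrain contribute only to $\theta^\delta$'s of other orders, so that no spurious term enters the critical coefficient; this is in the same spirit as the residue argument of Lemma \ref{Lemma:Tk1} and the proof of Theorem \ref{Thm:Parallel2}(iii).
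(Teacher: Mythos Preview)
Your overall strategy is the paper's: invert \eqref{Form:CurvParaPC} with the ansatz $t=t_0s\,\tau$, track residues modulo $3$, and read off the $\theta^\delta_i$ from $\theta(t(s))$. Your value $c_1=\tfrac{2\delta\,\theta_{3l_1+1}t_0^{3l_1+1}}{(3l_1+1)!}$ for the first residue-$1$ correction of $\tau$ agrees with the paper's $T_1(0)$, and the residue argument for the vanishing of $\theta^\delta_{3i+1}$, $\theta^\delta_{3i+2}$ below the critical orders is sound.

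The gap is in the degenerate case $p=k-2$. You list two contributions to $\theta^\delta_{3l_2+2}$: the quadratic $3c_1^2$ in $\tau^3$ entering via $\theta_3 t^3/6$, and the direct $\theta_{3l_2+2}$. These two alone do \emph{not} produce the stated coefficient. There are two further pieces at the same order: first, $\tau$ itself acquires a residue-$2$ correction $T_2(0)s^{3l_2-1}$ (note $3l_2-1=2(3l_1-2)$, the same order as $c_1^2$), so the relevant coefficient of $\tau^3$ is $3(c_1^2+T_2(0))$, not $3c_1^2$; second, and decisively, the summand $\theta_{3l_1+1}\,t^{3l_1+1}/(3l_1+1)!$ contributes a cross term $\theta_{3l_1+1}t_0^{3l_1+1}\,\tfrac{(3l_1+1)c_1}{(3l_1+1)!}=\theta_{3l_1+1}t_0^{3l_1+1}\tfrac{c_1}{(3l_1)!}$ at order $3l_2+2$. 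In fact it is this cross term, not the quadratic in $\tau^3$, that after simplification yields the factor $\tfrac{2\delta\,\theta_{3l_1+1}^2}{(3l_1)!(3l_1+1)!(1-\delta\theta_3)}$; your quadratic-only contribution would instead carry $\delta^2\theta_3/((3l_1+1)!)^2$, off by the factor $\delta\theta_3/(3l_1+1)$. The paper closes the computation by first determining $T_2(0)$ from the constraint $\tfrac{s^3}{6}=(1-\delta\theta_3)\tfrac{t^3}{6}-\delta\sum_{i>3}\theta_i t^i/i!$ (which gives a relation $\tfrac{T_1(0)^2+T_2(0)}{2}=\delta(\cdots)$), and then observing that the $\theta_3$-piece equals $\tfrac{\delta\theta_3}{1-\delta\theta_3}$ times the sum of the other two, so everything collapses to $\tfrac{1}{1-\delta\theta_3}(\text{cross term}+\text{direct term})$. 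Once you add these two missing ingredients, your argument goes through.
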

\begin{proof}
By \eqref{Condition:E_6k}, \eqref{ia} and \eqref{ib} 
(resp.~\eqref{Condition:E_6k+2}, \eqref{iia} and \eqref{iib}), we can set 
\begin{equation*}
\theta=\theta_3\frac{t^3}{3!}+t^6\hat\theta_0(t^3)+t^{3l_1+1}\hat\theta_1(t^3)+t^{3l_2+2}\hat\theta_2(t^3)
\end{equation*}
where 
$\hat\theta_0(t^3)=\sum_{i\ge2}\theta_{3i}\frac{(t^3)^{i-2}}{(3i)!}$,
$\hat\theta_1(t^3)=\sum_{i\ge l_1}\theta_{3i+1}\frac{(t^3)^{i-l_1}}{(3i+1)!}$,
$\hat\theta_2(t^3)=\sum_{i\ge l_2}\theta_{3i+2}\frac{(t^3)^{i-l_2}}{(3i+2)!}$, 
assuming \eqref{L1L2}.

Setting 
$$
t=t_0s[1+s^3T_0(s^3)+s^{3l'_1+1}T_1(s^3)+s^{3l'_2+2}T_2(s^3)],
$$
we have that 
\begin{align}
\frac{t^i}{i!}=&t_0^is^i
\sum_{i_0+i_1+i_2\le i}
s^{3i_0+(3l_1'+1)i_1+(3l_2'+2)i_2}
\frac{T_0(s^3)^{i_0}T_1(s^3)^{i_1}T_2(s^3)^{i_2}}{(i-i_0-i_1-i_2)!i_0!i_1!i_2!}. 
\label{TTii}
\end{align}
Since $\frac{s^3}{3!}=\frac{t^3}{3!}-\delta\theta$, we obtain that 
\begin{align}
\frac{s^3}{3!}
=&(1-\delta\theta_3)\frac{t^3}{3!}
-\delta\Bigl[\sum_{i\ge2}\theta_{3i}\frac{t^{3i}}{(3i)!}
+\sum_{i\ge l_1}\theta_{3i+1}\frac{t^{3i+1}}{(3i+1)!}
+\sum_{i\ge l_2}\theta_{3i+2}\frac{t^{3i+2}}{(3i+2)!}\Bigr]
\label{STT}\\
=&
(1-\delta\theta_3)t_0^3s^3
\begin{bmatrix}
\frac16+\frac12s^3T_0(s^3)+\frac12s^6T_0(s^3)^2+\cdots\\
+s^{3l'_1+1}T_1(s^3)(\frac12+s^3T_0(s^3)+\cdots)+\frac12s^{6l'_2+4}T_2(s^3)^2+\cdots\\
+s^{3l'_2+2}T_2(s^3)(\frac12+s^3T_0(s^3)+\cdots)+\frac12s^{6l'_1+2}T_1(s^3)^2+\cdots
\end{bmatrix}
\notag\\
-&
\delta\sum_{i\ge2}\theta_{3i}t_0^{3i}s^{3i}
\begin{bmatrix}
\frac1{(3i)!}+\frac1{(3i-1)!}s^3T_0(s^3)+\frac1{(3i-2)!2}s^6T_0(s^3)^2+\cdots\\
+s^{3l'_1+1}T_1(s^3)(\frac1{(3i-1)!}+\frac1{(3i-2)!}s^3T_0(s^3)+\cdots)+\cdots\\
+s^{3l'_2+2}T_2(s^3)(\frac1{(3i-1)!}+\frac1{(3i-2)!}s^3T_0(s^3)+\cdots)+\cdots
\end{bmatrix}
\notag\\
-&
\delta\sum_{i\ge l_1}\theta_{3i+1}t_0^{3i+1}s^{3i+1}
\begin{bmatrix}
s^{3l'_2+2}T_2(s^3)(\frac1{(3i)!}+\frac1{(3i-1)!}s^3T_0(s^3)+\cdots)+\cdots\\
+\frac1{(3i+1)!}+\frac1{(3i)!}s^3T_0(s^3)+\frac1{(3i-1)!2}s^6T_0(s^3)^2+\cdots\\
+s^{3l'_1+1}T_1(s^3)(\frac1{(3i)!}+\frac1{(3i-2)!}s^3T_0(s^3)+\cdots)+\cdots\\
\end{bmatrix}
\notag\\
-&
\delta\sum_{i\ge l_2}\theta_{3i+2}t_0^{3i+2}s^{3i+2}
\begin{bmatrix}
s^{3l'_1+1}T_1(s^3)(\frac1{(3i+1)!}+\frac1{(3i)!}s^3T_0(s^3)+\cdots)+\cdots\\
+s^{3l'_2+2}T_2(s^3)(\frac1{(3i+1)!}+\frac1{(3i)!}s^3T_0(s^3)+\cdots)+\cdots+\cdots\\
+\frac1{(3i+2)!}+\frac1{(3i+1)!}s^3T_0(s^3)+\frac1{(3i)!2}s^6T_0(s^3)^2+\cdots\\
\end{bmatrix}.
\notag
\end{align}
We thus obtain that $(1-\delta\theta_3)t_0^3=1$ and conclude \eqref{LL3}.
Moreover, we have 
\begin{align*}
\min\{3+3l'_1+1,3l_1+1,3l_2+3l'_2+4\}\ge&3l_1+1,\\
\min\{3+3l'_2+2,3l_1+3l'_1+2,3l_2+2\}\ge&3l_2+2.
\end{align*}
By these last two conditions, we can assume  that $l'_1=l_1-1$ and $l'_2=l_2-1$.
Therefore we see that the terms $t^3$, $t^{3l_1+1}$ and $t^{3l_2+2}$ in \eqref{STT} may contribute to 
the coefficients of $s^{3l_1+1}$ and $s^{3l_2+2}$. 
Since $t=t_0s[1+s^3T_0(s^3)+s^{3l_1-2}T_1(s^3)+s^{3l_2-1}T_2(s^3)]$, we have 
\begin{align}
\theta
=&\sum_{i\ge3}\theta_i\frac{t^i}{i!}
=\sum_{i\ge3}{\theta_i}
t_0^is^i\sum_{i_0+i_1+i_2\le i}
s^{3i_0+(3l_1-2)i_1+(3l_2-1)i_2}\frac{T_0(s^3)^{i_0}T_1(s^3)^{i_1}T_2(s^3)^{i_2}}{(i-i_0-i_1-i_2)!i_0!i_1!i_2!}. 
\label{Thetaddd}
\end{align}
We remark that the coefficient of $s^i$ in this series yields $\theta^\delta_i$.

Let us consider $E_{6k}$ (resp.~$E_{6k+2}$) singularities. 
By \eqref{TTii}, we look 
$(i,i_0,i_1,i_2)$ satisfying 
$$
i+3i_0+(3l_1'+1)i_1+(3l_2'+2)i_2\le 3l_2+2\ (\text{resp.}\ 3l_1+1).
$$ 
This holds if and only if 
\begin{equation}\label{IE6k}
(i,i_0,i_1,i_2)=
\begin{cases}
(i,i_0,0,0)\ \text{ with }3i_0\le3(k+p)-i-1\ (\text{resp.} \ 3(k+p)-i+1),\\
(i,i_0,1,0)\ \text{ with }3i_0\le3p+4-i\ (\text{resp.} \ 3p+5-i),\\
(3,0,2,0)\ (\text{resp.} \ (3,0,0,2))\ \text{ with }p=k-2,\\
(3,0,0,1)\ (\text{resp.} \ (3,0,1,0))\ \text{ with }p=k-2.
\end{cases}
\end{equation}
We confirm that the terms $t^3$, $t^{3l_1+1}$ and $t^{3l_2+2}$ in \eqref{Thetaddd} may contribute 
to $\theta^\delta_{3l_1+1}$ and $\theta^\delta_{3l_2+2}$ in \eqref{STT}. 
Moreover, by \eqref{STT}, we obtain
\begin{align*}
\frac{T_1(0)}2=&\delta\,\frac{\theta_{3l_1+1}\,t_0^{3l_1+1}}{(3l_1+1)!}&\Bigl
(\text{resp.}\ \frac{T_2(0)}2=&\delta\,\frac{\theta_{3l_2+2}\,t_0^{3l_2+2}}{(3l_2+2)!}\Bigr), 
\end{align*}
and, by \eqref{Thetaddd} and \eqref{IE6k},
\begin{align*}
\frac{\theta^\delta_{3l_1+1}}{(3l_1+1)!}
=&
\frac{1}{1-\delta\theta_3}\frac{\theta_{3l_1+1}t_0^{3l_1+1}}{(3l_1+1)!}&
\Bigl(\text{resp.}\
\frac{\theta^\delta_{3l_2+2}}{(3l_2+2)!}
=&
\frac{1}{1-\delta\theta_3}\frac{\theta_{3l_2+2}t_0^{3l_2+2}}{(3l_2+2)!}
\Bigr),
\end{align*}
since 
\begin{align*}
\frac{\theta^\delta_{3l_1+1}}{(3l_1+1)!}
=\theta_3t_0^3\frac{T_1(0)}2+\frac{\theta_{3l_1+1}t_0^{3l_1+1}}{(3l_1+1)!}
=&\Bigl(\frac{\delta\theta_3}{1-\delta\theta_3}+1\Bigr)\frac{\theta_{3l_1+1}t_0^{3l_1+1}}{(3l_1+1)!}
=\frac{1}{1-\delta\theta_3}\frac{\theta_{3l_1+1}t_0^{3l_1+1}}{(3l_1+1)!}.
\end{align*}
When $0\le p<k-2$, we obtain
\begin{align*}
\frac{T_2(0)}2=&\delta\,\frac{\theta_{3l_2+2}\,t_0^{3l_2+2}}{(3l_2+2)!}&
\Bigl(\text{resp.}\ \frac{T_1(0)}2=&\delta\,\frac{\theta_{3l_1+1}\,t_0^{3l_1+1}}{(3l_1+1)!}\Bigr), 
\end{align*}
and, by \eqref{Thetaddd} and \eqref{IE6k},
\begin{align*}
\frac{\theta^\delta_{3l_2+2}}{(3l_2+2)!}
=&\frac{1}{1-\delta\theta_3}\frac{\theta_{3l_2+2}t_0^{3l_2+2}}{(3l_2+2)!}&
\Bigl(\text{resp.}\ 
\frac{\theta^\delta_{3l_1+1}}{(3l_1+1)!}
=&\frac{1}{1-\delta\theta_3}\frac{\theta_{3l_1+1}t_0^{3l_1+1}}{(3l_1+1)!}
\Bigr).
\end{align*}
For $E_{6k}$ (resp.~$E_{6k+2}$) singularity with $p=k-2$, we have 
\begin{align*}
3l_1+1=&3k-2\ (\text{resp.}\ 6k-5),&
3l_2+2=&6k-7\ (\text{resp.}\ 3k-1),
\end{align*}
that is, $2(3l_1+1)-3=3l_2+2$ (resp.~$2(3l_2+2)-3=3l_1+1$), 
and we conclude that 
\begin{align*}
\frac{T_1(0)}2
=\delta\,\frac{\theta_{3l_1+1}\,t_0^{3l_1+1}}{(3l_1+1)!}\quad 
\Bigl(\text{resp.}\ \frac{T_1(0)+T_2(0)^2}2=\delta\Bigl(
\frac{\theta_{3l_2+2}t_0^{3l_2+2}}{(3l_2+1)!}T_2(0)+\frac{\theta_{3l_1+1}\,t_0^{3l_1+1}}{(3l_1+1)!}\Bigr)\Bigr), \\
\frac{T_2(0)+T_1(0)^2}2=
\delta\,\Bigl(\frac{\theta_{3l_1+1}t_0^{3l_1+1}}{(3l_1)!}T_1(0)
+\frac{\theta_{3l_2+2}t_0^{3l_2+2}}{(3l_2+2)!}\Bigr)\quad 
\Bigl(\text{resp.}\ \frac{T_2(0)}2=\delta\,\frac{\theta_{3l_2+2}\,t_0^{3l_2+2}}{(3l_2+2)!}\Bigr),
\end{align*}
by \eqref{STT}. We thus obtain that 
\begin{align*}
\theta^\delta_{3l_2+2}
=&
\tfrac{t_0^{3l_2+2}}{1-\delta\theta_3}\Bigl(
\tfrac{\theta_{3l_1+1}^2}{\delta^{-1}-\theta_3}\,
\tfrac{2(3l_2+2)!}{(3l_1)!(3l_1+1)!}
+\theta_{3l_2+2}
\Bigr)
\\
\Bigl(\text{resp.}\ \theta^\delta_{3l_1+1}
=&
\tfrac{t_0^{3l_1+1}}{1-\delta\theta_3}\Bigl(
\theta_{3l_1+1}
+\tfrac{2\theta_{3l_2+2}^2}{\delta^{-1}-\theta_3}\,
\tfrac{(3l_1+1)!}{(3l_2+1)!(3l_2+2)!}
\Bigr)\Bigr), 
\end{align*}
since
\begin{align*}
\frac{\theta^\delta_{3l_2+2}}{(3l_2+2)!}
=&
\theta_3t_0^3\,\frac{T_1(0)^2+T_2(0)}2
+\theta_{3l_1+1}t_0^{3l_1+1}\,\frac{T_1(0)}{(3l_1)!}
+\theta_{3l_2+2}t_0^{3l_2+2}\frac1{(3l_2+2)!}
\\
=&
\frac{\delta\theta_3}{1-\delta\theta_3}\Bigl(\frac{\theta_{3l_1+1}t_0^{3l_1+1}T_1(0)}{(3l_1)!}
+\frac{\theta_{3l_2+2}t_0^{3l_2+2}}{(3l_2+2)!}\Bigr)
+\frac{\theta_{3l_1+1}t_0^{3l_1+1}T_1(0)}{(3l_1)!}
+\frac{\theta_{3l_2+2}t_0^{3l_2+2}}{(3l_2+2)!}
\\
=&
\frac{1}{1-\delta\theta_3}\Bigl(\frac{\theta_{3l_1+1}t_0^{3l_1+1}T_1(0)}{(3l_1)!}
+\frac{\theta_{3l_2+2}t_0^{3l_2+2}}{(3l_2+2)!}\Bigr)
\\
=&
\frac{1}{1-\delta\theta_3}\Bigl(
\frac{2\delta\theta_{3l_1+1}^2t_0^{6l_1+2}}{(3l_1)!(3l_1+1)!}
+\frac{\theta_{3l_2+2}t_0^{3l_2+2}}{(3l_2+2)!}
\Bigr)
\\
=&
\frac{t_0^{3l_2+2}}{1-\delta\theta_3}\Bigl(
\frac1{\delta^{-1}-\theta_3}\,
\frac{2\theta_{3l_1+1}^2}{(3l_1)!(3l_1+1)!}
+\frac{\theta_{3l_2+2}}{(3l_2+2)!}
\Bigr). 
\end{align*}
We conclude the proof.
\end{proof}
\begin{rem}
As in Remark \ref{SexCurv}, we can define the notion of pseudo-curvature 
for an $E_{6k}$ (resp.~$E_{6k+2}$) singularity with $p=k-2$ and $k\ge3$, since $\phi_3^\delta$ 
exhibits an equi-multiple degeneration when $\delta^{-1}$ is equal to
\begin{equation}\label{SecondCurv2}
\theta_3-\frac{2(6k-7)!}{(3k-2)!(3k-3)!}\,\frac{\theta_{3k-2}^2}{\theta_{6k-7}}\quad
\Bigl(\text{resp.~}\theta_3-\frac{2(6k-5)!}{(3k-1)!(3k-2)!}\,\frac{\theta_{3k-1}^2}{\theta_{6k-5}}\Bigr).
\end{equation}
\end{rem}
In summary, the cases in which a degenerate parallel curve has an $\mathcal A$-simple singularity are as follows: 
\begin{center}
\begin{picture}(300,80)
\put(-20,70){\small nonsingular curve}
\put(60,70){\vector(4,-1){30}}\put(80,70){\scriptsize $\theta_2\ne0$}
\put(60,68){\vector(1,-1){30}}
\put(79,50){\scriptsize $\theta_2=0$}
\put(86,44){\scriptsize $\theta_3\ne0$}
\put(60,65){\vector(1,-2){30}}
\put(36,20){\scriptsize $\theta_2=\theta_3=0$}
\put(60,10){\scriptsize $\theta_4\ne0$}
\put(95,60){$A_2$}
\put(95,30){$E_6$}
\put(95,0){$W_{12}$}
\put(150,60){$A_4$}
\put(156,55){\vector(0,-1){14}}
\put(150,30){$E_8$}
\put(180,30){$E_{12}$}
\put(186,25){\vector(0,-1){14}}
\put(180,0){$W_{18}$}
\put(210,60){$A_6$}
\put(216,55){\vector(0,-1){44}}\put(218,35){\scriptsize $\theta_4\ne0$}
\put(210,0){$W^\#_{1,1}$}
\put(240,60){$A_8$}
\put(246,55){\vector(0,-1){44}}\put(248,35){\scriptsize $\theta_4\ne0$}
\put(240,0){$W^\#_{1,3}$}
\put(270,60){$A_{10}$}
\put(276,55){\vector(0,-1){44}}\put(278,35){\scriptsize $\theta_4\ne0$}
\put(270,0){$W^\#_{1,5}$}
\put(300,60){$\cdots$}
\put(300,0){$\cdots$}
\end{picture}\\
\vspace{10pt}
Degenerations of parallel curves at distant $\delta=\theta_m^{-1}$
\end{center}
\begin{rem}
Consider the case $m=4$.

As noted in Remark \ref{rmk:multofP}, the multiplicity of the parallel curve of $W_{12}$ (resp.~$W^{\#}_{1,2q-1}$, $W_{18}$) singularity $\phi_4$ is always $1$ (resp.~$2$, $3$). 
Thus the family of parallel curves $\phi_4^\delta$ has already been treated in Theorem \ref{Thm:Parallel1} (iii) 
(resp.~Theorem \ref{Thm:Parallel2} (iii-b), Theorem \ref{Thm:Parallel3} (iii-b)). 
This implies that $\phi_4^\delta$, for $\delta\ne0$, is nonsingular (resp.~has an $A_{2(q+2)}$ singularity, an $E_{12}$ singularity) at $0$.  

When $\theta_1=\theta_2=\theta_3=0$,  
we have $m^\delta\ge4$ and $t=t_0s+\frac{\delta\theta_5}{10(1-\delta\theta_4)}(t_0s)^2/2+o(t^2)$, 
$t_0=|1-\delta\theta_4|^{-1/4}$, 
$$
\theta^\delta_1=0,\ 
\theta^\delta_2=0,\ 
\theta^\delta_3=0.
$$
In this case, the singularity is not $\mathcal A$-simple. 
\end{rem}


\begin{thebibliography}{99}
\bibitem{AGV1}
V.\,I.\,Arnold, S.\,M.\,Gusein-Zade and  A.\,N.\,Varchenko, 
Singularities of Differentiable Maps, Volume I, Birkh\"auser, 1985. 
\bibitem{A}
V.\,I.\,Arnol'd, 
Simple singularities of curves, 
Proc. Steklov Inst. Math. 226, 20--28 (1999).
\bibitem{BG}
J.\,W.\,Bruce and T.\,J.\,Gaffney, Simple singularities of mappings $\BB{C},0\longrightarrow\BB{C}^2,0$, 
J. London Math. Soc. (2), 26 (1982), 465--474.
\bibitem{BGib}
J.\,W.\,Bruce and P.\,J.\,Giblin, 
Curves and singularities, A geometrical introduction to singularity theory, Cambridge University Press, 1984. 
\bibitem{Cayley}
A.\,Cayley, 
On evolutes and parallel curves, 
Quart. J. XI, 183-200 (1870).
\bibitem{Fukui2017}
T.\,Fukui, 
Local differential geometry of singular curves with finite multiplicities, Saitama Math. J. 31 (2017), 79--88.
\bibitem{G} C.\,G.\,Giblin, 
Singular points of smooth mappings, 
Research Notes in Mathematics 25, Pitman, London. 1979.
\bibitem{GH}
C.\,G.\,Gibson and C.\,A.\,Hobbs, 
Simple singularities of space curves, 
Math. Proc. Camb. Phil. Soc. (1993), 113, 297--310.
\bibitem{HHM}
Y.\,Hattori,  A.\,Honda and T.\,Morimoto,
Bour's theorem for helicoidal surfaces with singularities, 
Differential Geometry and its Applications, 99 (2025).
\bibitem{JP}
F.\,K.\,Janjua and G.\,Pfister, 
A classifier for simple space curve singularities,
Stud. Sci. Math. Hung. 51, No. 1, 92--104 (2014).
\bibitem{M}
Y.\,Matsushita, 
Classifications of cusps appearing on plane curves, 
arXiv:2402.12166
\bibitem{N}
H.\,D.\,Nguyen, 
Parametrization simple irreducible curve singularities in arbitrary characteristic,
Pure Appl. Math. Q. 16, No. 4, 1053--1066 (2020).
\bibitem{Porteous}
I.\,Porteous,
Geometric differentiation: for the intelligence of curves and surfaces, 
Cambridge University Press, 1994. 
\bibitem{Roberts}
S.\,Roberts, 
On the order and singularities of the parallel of an algebraical curve,
Proc. Lond. Math. Soc. III, 209--220 (1869/71).
\bibitem{SU}
S.\,Shiba and M.\,Umehara, The behavior of curvature functions at cusps and inflection points, 
Differential Geometry and its Applications, 30 (2012), 285--299.
\bibitem{S}
J.\,Stevens,
Simple curve singularities,
Journal of Singularities 12 (2015), 191--206.
\bibitem{Wall}
C.~T.~C.~Wall, 
Finite determinacy of smooth map-germs,
Bull. Lond. Math. Soc. 13, 481--539 (1981).
\bibitem{Z}
M.\,Zhitomirskii,
Fully simple singularities of plane and space curves, 
Proc. Lond. Math. Soc. (3) 96, No. 3, 792--812 (2008).
\end{thebibliography}
\end{document}